\documentclass[a4paper,11pt]{article}
\pagestyle{headings}

\usepackage{amsmath}
\usepackage{amsthm}
\usepackage{amssymb}
\usepackage[latin1]{inputenc}
\usepackage[english]{babel}
\usepackage{hyperref}
\usepackage{color}

\theoremstyle{plain}
\newtheorem{thm}{Theorem}
\newtheorem{prop}[thm]{Proposition}
\newtheorem{lem}[thm]{Lemma}
\newtheorem{cor}[thm]{Corollary}

\theoremstyle{definition}
\newtheorem{defn}[thm]{Definition}
\theoremstyle{remark}

\DeclareMathOperator{\End}{End}
\DeclareMathOperator{\tr}{tr}
\DeclareMathOperator{\scal}{scal}
\DeclareMathOperator{\Diff}{Diff}
\DeclareMathOperator{\id}{id}
\DeclareMathOperator{\ad}{ad}
\DeclareMathOperator{\Ad}{Ad}

\bibliographystyle{unsrt}

\begin{document}
\title{Special homogeneous almost complex structures on symplectic manifolds}
\date{\today}

\author{Alberto Della Vedova\footnote{Università di Milano-Bicocca (Italy) - alberto.dellavedova@unimib.it}
}

\maketitle

\begin{abstract}
Homogeneous compatible almost complex structures on symplectic manifolds are studied, focusing on those which are special, meaning that their Chern-Ricci form is a multiple of the symplectic form. 
Non Chern-Ricci flat ones are proven to be covered by co-adjoint orbits.
Conversely, compact isotropy co-adjoint orbits of semi-simple Lie groups are shown to admit special compatible almost complex structures whenever they satisfy a necessary topological condition.
Some classes of examples including twistor spaces of hyperbolic manifolds and discrete quotients of Griffiths period domains of weight two are discussed.
\end{abstract}


\section{Introduction}
The aim of the present paper is provide some new examples of symplectic manifolds endowed with compatible almost complex structures having special curvature properties.
More specifically, on a symplectic manifold $(M,\omega)$ we will call \emph{special} a compatible almost complex structure $J$ whose Chern-Ricci form $\rho$ is a multiple of the symplectic form $\omega$ (cfr. definition \ref{defn::specialJ}).
This condition has been previously considered by Apostolov and Dr\u aghici \cite[Section 7]{ApoDra2003}, and should be thought of as a natural extension of the K\"ahler-Einstein condition to non-integrable compatible almost complex structures.
As in the integrable case, the existence of a special $J$ forces the first Chern class $c_1$ of $(M,\omega)$ to be a multiple of $[\omega]$.
Since $\rho$ represents $4\pi c_1$, the existence of a special $J$ satisfying $\rho=\lambda \omega$ makes $(M,\omega)$ a symplectic Fano, symplectic Calabi-Yau or symplectic general type depending on $\lambda$ is positive, zero or negative respectively.
On the other hand, it is a remarkable fact that on closed symplectic Calabi-Yau or symplectic Fano manifolds the Riemannian metric associated to a special $J$ is Einstein if and only if $J$ is integrable (cfr. proposition \ref{prop::nonintnonEinst}).

Similarly to the K\"ahler-Einstein case, the existence of a special compatible almost complex structure may put some topological constraints on the symplectic manifold \cite{Yau1977}.
On the other hand, the existence problem for special compatible almost complex structures may be included in a moment map picture similar to the one discussed by Donaldson for K\"ahler-Einstein metrics \cite{Donaldson2015}.
We leave these considerations for future work, instead here we focus on discussion of examples.

The first large class is constituted by all closed symplectic manifolds $(M,\omega)$ satisfying the necessary condition $c_1=\lambda [\omega]$ and admitting an integrable, and K-stable if $\lambda>0$, compatible almost complex structure $J_0$.
Standing on fundamental works of Aubin, Yau, Chen-Donaldson-Sun and Tian on the existence of K\"ahler-Einstein metrics on closed complex manifolds \cite{Aubin1978,Yau1978,ChenDonaldsonSun2015,Tian2015}, it is easy to see that this is the class of closed K\"ahler-Einstein manifolds (cfr. proposition \ref{prop::existspecialifint} and discussion thereafter).
Starting with the integrable special $J$ which exists on any symplectic manifold of this class, one can try and perturb $J$ to a non-integrable special compatible almost complex structure, as done by Lejmi for locally toric K\"ahler-Einstein surfaces \cite[Corollary 4.3]{Lejmi2012}.
On the other hand, no general existence results are avilable for special compatible complex structures on symplectic manifolds which are not of K\"ahler type, meaning that they admit no integrable compatible almost complex structures.
This lead us to focus primarily on this class. 

Our source of special compatible almost complex structures over non-K\"ahler type symplectic manifolds will be symplectic manifolds endowed by homogeneous compatible almost complex structures.
More specifically, we consider a symplectic manifold $(M,\omega)$ and a compatible almost complex structure $J$ on it and we assume there is a transitive action of a connected Lie group $G$ which preserves both $\omega$ and $J$.
In these circumstances, $\omega$ and $J$ are completely determined by their value at a single point $x \in M$.
Denoted by $K$ the isotropy subgroup at $x$, clearly $M$ turns out to be diffeomorphic to the homogeneous space $G/K$.
For simplicity we also assume that the action of $G$ is almost-effective, meaning that no non-discrete normal subgroups of $G$ act trivially on $M$.
As a consequence, we can work as if $K$ were compact (cfr. subsection \ref{subsec::gensetup}).
The upshot is that $\omega$ and $J$ are determined by a linear two-form $\sigma$ and a linear endomorphism $H$ on the Lie algebra $\mathfrak g$ of $G$ satisfying suitable conditions (cfr. subsection \ref{subsec::gensetup} and discussion after theorem \ref{thm::chernriccionm}).
Due to $G$-invariance, any geometric quantity related to $\omega$ and $J$ is determined by its value at $x$.
As a consequence, one may expect that this value at $x$ can be expressed in terms of $\sigma$ and $H$.
Indeed, we will show that the Chern-Ricci form $\rho$ of $J$ satisfies (cfr. corollary \ref{cor::rhohomwithW})
\begin{equation}\label{eq::intro_rhohom}
\rho(X,Y)_x = \tr \left( \ad_{H[X,Y]_{\mathfrak g}} - H \ad_{[X,Y]_{\mathfrak g}} \right).
\end{equation}
Similarly, the Hermitian scalar curvature of $J$ is the real constant given by (cfr. corollary \ref{cor::Hermitianscalarcurvhom})
\begin{equation}
s = \tr \left( \ad_{H\xi} - H \ad_\xi \right),
\end{equation}
where $\xi \in \mathfrak g$ is an element depending just on $\sigma$ (cfr. proposition \ref{prop::propertiesxi}).
Note that these results reduce to well known formulae for the Ricci form and the scalar curvature of compact homogeneous K\"ahler manifolds when $J$ is integrable \cite[Proposition 8.47 and Corollary 8.49]{Besse1987}.
Finally also an explicit formula for the squared norm of the Nijenhuis tensor of $J$ can be deduced (cfr. proposition \ref{prop::formulaNijenhuis}).

When $G$ is compact and semi-simple, by classical results of Borel and Weil it follows that $(M,\omega)$ is of K\"ahler type \cite{Serre1954bis}.
Therefore, in view of getting non-K\"ahler symplectic manifolds, one has to consider primarily the case when $G$, hence $M$, is non-compact.
Eventually, under unimodularity assumption of $G$, compact manifolds are obtained by taking the quotient $\Gamma \backslash M$, being $\Gamma \subset G$ a co-compact discrete subgroup (cfr. subsection \ref{subsection::quot}).

The simplest situation is when $G$ is a symplectic Lie group, i.e. a Lie group endowed with a left-invariant symplectic structure, and $K$ is the trivial subgroup.
As noted by Chu and by Lichnerowicz and Medina, if in addition $G$ is assumed to be unimodular, then it has to be solvable \cite{Chu1974,LichnerowiczMedina1988}.
Therefore any compact symplectic manifold of the form $\Gamma \backslash G$ is a solvmanifold.
While the existence of a special compatible almost complex structure on a symplectic solvmanifold may be obstructed, some restricted subclasses of symplectic solvmanifold do admit plenty of such structures.   
In particular, equation \eqref{eq::intro_rhohom} readily implies that any homogeneous compatible almost-complex structure on a two-step nilpotent Lie group is Chern-Ricci flat, i.e. it satisfies $\rho=0$.
As a consequence, by taking quotients we recover the following result which was already known by a work of Vezzoni \cite{Vezzoni2013}.
\begin{thm}
Any homogeneous compatible almost complex structure on a symplectic two-step nilmanifold is Chern-Ricci flat.
\end{thm}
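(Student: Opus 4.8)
The plan is to reduce the statement to a single algebraic computation at the base point via the Chern-Ricci formula \eqref{eq::intro_rhohom}. A two-step nilmanifold $\Gamma\backslash G$ carries its homogeneous symplectic form and compatible almost complex structure from the left-invariant structure on the nilpotent group $G$ acting simply transitively, so the isotropy is trivial and $\mathfrak g$ is precisely the two-step nilpotent Lie algebra of $G$. By $G$-invariance it therefore suffices to prove that $\rho(X,Y)_x = 0$ for all $X,Y \in \mathfrak g$, which by \eqref{eq::intro_rhohom} amounts to showing that the two traces $\tr(\ad_{H[X,Y]_{\mathfrak g}})$ and $\tr(H\ad_{[X,Y]_{\mathfrak g}})$ both vanish; the vanishing then descends to $\Gamma\backslash G$ since $\rho$ is invariant under the discrete translations and hence well defined on the quotient.

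For the second trace I would invoke the defining property of two-step nilpotency directly. By definition the derived algebra $[\mathfrak g,\mathfrak g]$ is contained in the center $\mathfrak z(\mathfrak g)$, so $[X,Y]_{\mathfrak g}\in\mathfrak z(\mathfrak g)$ and hence $\ad_{[X,Y]_{\mathfrak g}}=0$ as an endomorphism of $\mathfrak g$. Consequently $H\ad_{[X,Y]_{\mathfrak g}}=0$ and its trace vanishes identically, irrespective of the endomorphism $H$ encoding $J$.

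The first trace is the point where one might naively expect trouble, since $H[X,Y]_{\mathfrak g}$ need not be central and so $\ad_{H[X,Y]_{\mathfrak g}}$ need not vanish. The resolution is that only its trace matters. Indeed, for any $W\in\mathfrak g$ the two-step condition gives $\ad_W^2 V = [W,[W,V]_{\mathfrak g}]_{\mathfrak g}=0$ for all $V$, because $[W,V]_{\mathfrak g}\in[\mathfrak g,\mathfrak g]\subseteq\mathfrak z(\mathfrak g)$; thus every $\ad_W$ is a nilpotent endomorphism and therefore trace-free. Taking $W = H[X,Y]_{\mathfrak g}$ yields $\tr(\ad_{H[X,Y]_{\mathfrak g}})=0$.

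Combining the two vanishings gives $\rho_x=0$, hence $\rho\equiv 0$ on the homogeneous model and, after passing to the compact quotient, on the nilmanifold itself. I do not expect a genuine obstacle here: once \eqref{eq::intro_rhohom} is available the content is entirely structural, resting on the single fact that the derived algebra is central. The only mildly delicate observation is conceptual rather than computational, namely that the two summands in \eqref{eq::intro_rhohom} vanish for complementary reasons — centrality of the bracket kills the second term outright, while the trace-free nature of adjoint operators (a consequence of the same centrality) kills the first — so that no hypothesis on $J$ beyond those already built into \eqref{eq::intro_rhohom} is needed.
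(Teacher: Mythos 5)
Your proof is correct and follows essentially the same route as the paper: the paper likewise applies the trace formula of corollary \ref{cor::rhohomwithW} to the two-step nilpotent Lie algebra (trivial isotropy, $\mathfrak m=\mathfrak g$), kills the term $\tr\bigl(H\ad_{[X,Y]_{\mathfrak g}}\bigr)$ because the derived algebra is central so $\ad_{[X,Y]_{\mathfrak g}}=0$, and kills $\tr\bigl(\ad_{H[X,Y]_{\mathfrak g}}\bigr)$ by unimodularity of the nilpotent group $G$, then passes to the quotient $\Gamma\backslash G$ exactly as you do. The only difference is cosmetic: where the paper invokes the general fact that nilpotent groups are unimodular, you verify trace-freeness directly from $\ad_W^2=0$, which is a perfectly valid (indeed more self-contained) justification of the same point.
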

Thus, in particular, any homogeneous almost complex structure on the Kodaira-Thurston manifold \cite{Thurston1976} is Chern-Ricci flat (cfr. subsection \ref{subsec::sympliegroups}).

A richer class of homogeneous symplectic manifolds which drawn considerable attention by many point of view is that of co-adjoint orbits endowed with their canonical Kirillov-Kostant-Souriau symplectic structure \cite{Vogan2000}.
In our situation co-adjoint orbits serve as covering spaces of symplectic manifolds admitting homogeneous non Chern-Ricci flat special compatible almost complex structures.
In fact, it holds the following (cfr. subsection \ref{subsect::coadjointorbits})
\begin{thm}
Let $(M,\omega)$ be a symplectic manifold admitting a homogeneous compatible almost complex structure satisfying $\rho = \lambda \omega$ for some $\lambda \neq 0$.
Then $M$ is a covering space of a co-adjoint orbit and $\omega$ is the pull-back via the covering map of the canonical symplectic form.
\end{thm}
A partial converse of theorem above is represented by the following
\begin{thm}\label{thm::intro_specialonsemisimplecoadjoint}
Let $G$ be a connected semi-simple Lie group, and let $M \subset \mathfrak g^*$ be a co-adjoint orbit equipped with the canonical symplectic form $\omega$.
Assume that the isotropy of $M$ is compact and contains no non-discrete normal subgroups of $G$.
If the first Chern class of $(M,\omega)$ satisfies $4\pi c_1=\lambda [\omega]$ for some $\lambda \in \mathbf R$, then there exists a homogeneous special almost complex structure on $(M,\omega)$.
\end{thm}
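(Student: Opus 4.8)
The plan is to realise $M$ as an adjoint orbit and to read off a distinguished compatible almost complex structure from the adjoint action, then to compute its Chern-Ricci form with \eqref{eq::intro_rhohom} and match it to $\lambda\omega$ using the hypothesis on $c_1$. First I would use the Killing form $B$, which is non-degenerate since $\mathfrak g$ is semi-simple, to identify $\mathfrak g^*\cong\mathfrak g$ and view $M$ as the adjoint orbit through some $Z\in\mathfrak g$; the isotropy subalgebra is then the centraliser $\mathfrak k=\{X:[Z,X]=0\}$, and compactness of $\mathfrak k$ forces $Z$ to be elliptic, so that $\ad_Z$ is semisimple with purely imaginary spectrum. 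Taking $\mathfrak m=\mathfrak k^{\perp}$ gives a reductive decomposition $\mathfrak g=\mathfrak k\oplus\mathfrak m$, and the Kirillov-Kostant-Souriau form is $\sigma(X,Y)=B(Z,[X,Y])$ on $\mathfrak m$. Fixing a maximal abelian $\mathfrak t\subseteq\mathfrak k$ with $Z\in\mathfrak t$ and decomposing $\mathfrak m^{\mathbf C}=\bigoplus_{\alpha\in\Delta_{\mathfrak m}}\mathfrak g_\alpha$ into root spaces (with $\Delta_{\mathfrak m}=\{\alpha:\alpha(Z)\ne0\}$), I would define $J$ by declaring $\mathfrak g_\alpha$ to be of type $(1,0)$ exactly for the roots in $\Delta_{\mathfrak m}^{+}:=\{\alpha\in\Delta_{\mathfrak m}:\alpha(Z)/i>0\}$. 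Since $Z$ is elliptic this choice is $\ad\mathfrak k$-invariant, so $J$ descends to a homogeneous compatible almost complex structure, and the sign convention is precisely the one making the metric $\sigma(\,\cdot\,,J\,\cdot\,)$ positive definite.

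Next I would compute $\rho$. Because $\mathfrak g$ is semisimple, $\tr\ad_W=0$ for every $W$, so the first term in \eqref{eq::intro_rhohom} vanishes and $\rho(X,Y)=-\tr(H\ad_{[X,Y]})$. Writing this linear functional of $[X,Y]$ as $B(R,\,\cdot\,)$ defines a \emph{Ricci element} $R\in\mathfrak g$ with $\rho=B(R,[\,\cdot\,,\,\cdot\,])$. The $\ad\mathfrak k$-invariance of $H$ gives, for $W\in\mathfrak k$ and any $C\in\mathfrak g$, the identity $\tr(H\ad_{[W,C]})=\tr([H,\ad_W]\ad_C)=0$, so $R$ is fixed by $\Ad K$; as $\mathfrak k$ is the full centraliser of $Z$ this forces $R\in\mathfrak z(\mathfrak k)\subseteq\mathfrak t$. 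Evaluating $B(R,T)$ for $T\in\mathfrak z(\mathfrak k)$, only the $\mathfrak m$-root spaces contribute (the $\mathfrak k$-roots vanish on $\mathfrak z(\mathfrak k)$), and a short root-by-root count identifies $R$ with a real multiple of the central projection of $\sum_{\alpha\in\Delta_{\mathfrak m}^{+}}\alpha$, the sum of the $(1,0)$-roots. In particular $R$ is independent of the undetermined block $H|_{\mathfrak k}$.

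It remains to match $\rho$ with $\lambda\omega$. Since both $R$ and $Z$ lie in $\mathfrak z(\mathfrak k)$ and the $\mathfrak t$-part of $[\mathfrak m,\mathfrak m]$ is spanned by the coroots $H_\alpha$, $\alpha\in\Delta_{\mathfrak m}$, the identity $\rho=\lambda\sigma$ is equivalent to the finite system $B(R-\lambda Z,H_\alpha)=0$ for all $\alpha\in\Delta_{\mathfrak m}$, that is, to the proportionality of $\sum_{\alpha\in\Delta_{\mathfrak m}^{+}}\alpha$ and the weight of $Z$ against every coroot. This is exactly the K\"ahler-Einstein (monotonicity) condition, and I would derive it from $4\pi c_1=\lambda[\omega]$ by pairing the invariant forms $\rho$ and $\omega$ against the invariant two-spheres $S_\alpha$ coming from the $\alpha$-root $\mathfrak{sl}_2$-triples, using that $\rho$ represents $4\pi c_1$; these pairings return $B(R,H_\alpha)$ and $B(Z,H_\alpha)$ up to a common constant, whence $B(R-\lambda Z,H_\alpha)=0$ and $\rho=\lambda\omega$.

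The main obstacle is precisely this last upgrade from cohomology to the Lie-algebra level. When $M$ is non-compact some of the spheres $S_\alpha$ bound — the split $\mathfrak{sl}_2$-directions contribute contractible hyperbolic-plane factors invisible to $H^2$ — so for those roots the cohomological hypothesis gives no information, and one must instead argue that the proportionality holds automatically there, with a constant agreeing with the one fixed by the remaining directions. Showing uniformly that the Ricci element $R$ is forced to stay proportional to $Z$ along these non-compact directions is the crux; once it is established, $J$ is special.
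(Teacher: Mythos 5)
Your construction is essentially the paper's up to the point you yourself flag: identifying $\theta$ with an element $V\in\mathfrak g$ via the Killing form, taking $\mathfrak k$ to be the centralizer, splitting $\mathfrak m$ into $\ad_V$-invariant planes on which $B$ is definite, and twisting $\frac{1}{\lambda_i}\ad_V$ by the sign $\varepsilon_i$ of $B$ to get the compatible $H$ is exactly the construction preceding theorem \ref{thm::existencespecialsemisimple}; likewise your ``Ricci element'' $R$ is the paper's $V'=2\sum_i[u_i,v_i]_{\mathfrak g}$, which lies in the center $\mathfrak z$ of $\mathfrak k$, and the problem is correctly reduced to proving $V'=\lambda V$. (Minor caveat on the intermediate computation: the root-by-root identification of $R$ must carry the signs $\varepsilon_i$, since for non-compact root planes the contribution of $[u_i,v_i]_{\mathfrak g}$ flips relative to the compact case, so $R$ is a \emph{signed} sum of the $(1,0)$-roots rather than the central projection of $\sum_{\alpha\in\Delta^+_{\mathfrak m}}\alpha$; this does not affect the structure of the argument.) The genuine gap is the final step, and your own diagnosis of it is accurate: pairing $\rho$ and $\omega$ against invariant spheres $S_\alpha$ extracts the relation $B(R-\lambda V,H_\alpha)=0$ only for compact root directions, the split $\mathfrak{sl}_2$-directions produce contractible orbits carrying no homology, and your fallback --- that proportionality ``holds automatically'' along those directions --- is asserted without any argument. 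Nothing in the construction forces it pointwise, so as written the proof does not close.

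The paper closes precisely this hole by a global mechanism that avoids cycles altogether. It introduces the linear map $\varphi:\mathfrak z\to\Omega^2(M)$, $\varphi(Z)(X,Y)_x=B(Z,[X,Y]_{\mathfrak g})$, so that $\omega=\varphi(V)$ and $\rho=\varphi(V')$, and proves (lemma \ref{lem::injcohomology}) that the induced map $\mathfrak z\to H_{dR}^2(M)$ is \emph{injective}. The proof pulls back along $\pi:G\to M$: on $G$ each $\varphi(Z)$ becomes exact (it is the Chevalley--Eilenberg differential of the left-invariant one-form $-B(Z,\cdot)$), and from a primitive restricted to the fibre one builds a transgression-type map $\ker\bigl(H_{dR}^2(M)\to H_{dR}^2(G)\bigr)\to H_{dR}^1(K)\cong\mathfrak z$ which is a right inverse of $\varphi$; here compactness and connectedness of $K$ enter, the latter being lemma \ref{lem::Kconnected} via Hopf's theorem on centralizers of tori. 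Since $4\pi c_1=\lambda[\omega]$ makes $\rho-\lambda\omega=\varphi(V'-\lambda V)$ exact, injectivity yields $V'=\lambda V$ at once, uniformly in all root directions. In particular your worry that part of the hypothesis is ``invisible to $H^2$'' is misplaced: no direction of $\mathfrak z$ dies in $H_{dR}^2(M)$ --- only your sphere pairings fail to detect it (if you insist on a cycle-based repair, note that $M$ deformation retracts onto the compact homogeneous space $K'/K$ for a maximal compact $K'\supset K$, where pairings are available, but this amounts to reproving the lemma). Supplying the injectivity statement, or an equivalent, is what your proposal is missing.
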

The proof of this result is completely constructive.
As an application, taking $G=SO(2n,1)$ we exhibit special compatible almost complex structures on the twistor space of the hyperbolic space $H^{2n}$ (cfr. subsections \ref{subsubsect::twistorHspace}).
The quotient of this twistor space by a discrete torsion-free subgroup $\Gamma \subset SO(2n,1)$ is the twistor space of the hyperbolic manifold $\Gamma \backslash H^{2n}$, which has shown to be symplectic by Reznikov \cite{Reznikov1993}.
More recently, Fine and Panov showed that it is symplectic general type, symplectic Calabi-Yau, or symplectic Fano if $n=1$, $n=2$ or $n>2$ respectively \cite{FinePanov2009}.
We give an independent proof of these results and we complement them by the following (cfr. subsection \ref{subsec::quotienttwistorsandperioddomains})

\begin{thm}
The twistor space $(M,\omega)$ of a hyperbolic $2n$-fold admits a compatible almost complex structure satisfying $\rho=(2n-4) \omega$. 
\end{thm}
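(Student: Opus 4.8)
The plan is to prove the statement first over the universal twistor space of $H^{2n}$ and then to descend the resulting structure to the quotient by a lattice. As recalled in subsection \ref{subsubsect::twistorHspace}, taking $G=SO(2n,1)$ acting on $H^{2n}=G/SO(2n)$, the twistor space is the homogeneous space $G/U(n)$, which can be realized as a co-adjoint orbit $M\subset\mathfrak g^*$ endowed with its canonical symplectic form $\omega$. The isotropy $U(n)$ is compact, and since $\mathfrak{so}(2n,1)$ is simple the group $G$ has no non-discrete normal subgroups; hence $U(n)$ contains none, and the structural hypotheses of Theorem \ref{thm::intro_specialonsemisimplecoadjoint} are met. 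It then remains only to verify the topological condition $4\pi c_1=\lambda[\omega]$, after which the theorem furnishes a homogeneous special compatible almost complex structure $J$ on $(M,\omega)$ with $\rho=\lambda\omega$.

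The crux is to identify the constant $\lambda$ and to show that it equals $2n-4$. Since $\rho$ represents $4\pi c_1$, this is the cohomological identity $4\pi c_1=(2n-4)[\omega]$, which I would establish pointwise from the trace formula \eqref{eq::intro_rhohom}. Concretely, I would fix a maximal torus of $U(n)$ and decompose the complexified tangent space $\mathfrak g^{\mathbb C}/\mathfrak u(n)^{\mathbb C}$ into weight spaces, on which the endomorphism $H$ determining $J$ acts by $\pm i$ according to the splitting into its $(1,0)$ and $(0,1)$ parts. Evaluating $\rho$ from \eqref{eq::intro_rhohom} and $\omega_x=\sigma$ on a single pair of conjugate root vectors reduces the claim to a finite sum of root values for $\mathfrak{so}(2n,1)$, in which the $\ad$-traces split into the vertical contribution of the twistor fibre $SO(2n)/U(n)$ and the horizontal contribution of the base $H^{2n}$, and I expect these to combine to the ratio $2n-4$. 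A convenient independent check is furnished by the Hermitian scalar curvature: since $\rho=\lambda\omega$ forces $s=2\lambda\dim_{\mathbb C}M=\lambda\,n(n+1)$ (as $\dim_{\mathbb C}M=n(n+1)/2$), the value $\lambda=2n-4$ may be confirmed from corollary \ref{cor::Hermitianscalarcurvhom}; moreover $2n-4$ is negative, zero, or positive exactly when $n=1$, $n=2$, or $n>2$, matching the Fine--Panov trichotomy.

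Finally, I would descend to the quotient. Choosing a torsion-free cocompact lattice $\Gamma\subset G$ (which exist for $SO(2n,1)$), the action of $\Gamma$ on $H^{2n}$ is free and properly discontinuous and lifts to the fibration $M\to H^{2n}$, so that $\Gamma\backslash M$ is a closed manifold, namely the twistor space of the closed hyperbolic manifold $\Gamma\backslash H^{2n}$. Since $G$ is semi-simple, hence unimodular, the quotient construction of subsection \ref{subsection::quot} applies: the $G$-invariant, and in particular $\Gamma$-invariant, data $\omega$ and $J$ pass to $\Gamma\backslash M$. The Chern-Ricci form is a local invariant of $(\omega,J)$, so the identity $\rho=(2n-4)\omega$ descends verbatim, giving the claimed compatible almost complex structure on the twistor space of the hyperbolic $2n$-fold.

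The main obstacle is the second step, the determination of the constant $2n-4$: everything else amounts to assembling the co-adjoint orbit realization of subsection \ref{subsubsect::twistorHspace}, Theorem \ref{thm::intro_specialonsemisimplecoadjoint}, and the quotient procedure of subsection \ref{subsection::quot}, whereas pinning down $\lambda$ requires the explicit root-space bookkeeping for $\mathfrak{so}(2n,1)$ in the trace formula \eqref{eq::intro_rhohom}.
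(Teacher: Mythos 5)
Your proposal is correct in outline and follows essentially the paper's route: realize the twistor space of $H^{2n}$ as the co-adjoint orbit $SO(2n,1)/U(n)$, determine the constant from the trace formula for $\rho$, and descend to the quotient. Your proposed weight-space bookkeeping is the paper's explicit computation in disguise: the paper takes the bases $E_i$ of $\mathfrak m^+$ (horizontal, $\ad_V$-eigenvalue $1$) and $P_{ij},Q_{ij}$ of $\mathfrak m^-$ (vertical, eigenvalue $2$), and reduces everything to the identity $V'=(2n-4)V$ with $V'=2\sum_{i}[E_i,E_{i+n}]+2\sum_{i<j}[P_{ij},Q_{ij}]$, which is exactly your vertical-plus-horizontal split. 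Three remarks. First, your detour through theorem \ref{thm::intro_specialonsemisimplecoadjoint} is logically redundant: the only method you offer for verifying its hypothesis $4\pi c_1=\lambda[\omega]$ is to compute $\rho$ of an explicit invariant $J$ pointwise and find $\rho=(2n-4)\omega$, but that computation already \emph{is} the whole proof, which is why the paper runs the construction directly instead of citing the theorem. Second, your scalar-curvature cross-check carries a factor-of-two slip: in the paper's normalization a special structure satisfies $s=\lambda\dim_{\mathbf C}M$, not $2\lambda\dim_{\mathbf C}M$, so here $s=\frac{n(n+1)}{2}(2n-4)=n(n+1)(n-2)$; as written, your check would ``confirm'' $\lambda=n-2$ and report a spurious inconsistency with the main computation, so fix the normalization before using it. Third, the statement concerns an arbitrary hyperbolic $2n$-fold, so you should descend by the given torsion-free discrete subgroup $\Gamma\subset SO(2n,1)$ rather than choosing a cocompact lattice; your descent argument applies verbatim to any such $\Gamma$, cocompactness being relevant only for producing closed examples.
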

The fact that $\omega$ coincides up to a factor with the Reznikov symplectic form can be checked by direct calculations.
On the other hand, it is straightforward to recover the Fine and Panov result by taking cohomology classes of $\rho$ and $[\omega]$.
All compact examples above turn out be of non-K\"ahler type precisely when $n>1$.
Therefore none of them is both non-K\"ahler type and symplectic general type.
In order to get such kind of manifolds, one can apply the construction behind the theorem \ref{thm::intro_specialonsemisimplecoadjoint} to some coadjoint orbits of the group $G=SO(2p,q)$.
The resulting manifolds are Griffiths period domains of weight two (cfr. subsection \ref{subsubsection::gentypesymp}).

\begin{thm}
A Griffiths period domain of weight two $SO(2p,q)/U(p) \times SO(q)$ admits a symplectic form $\omega$ and a compatible almost complex structure satisfying $\rho=(2p-2q-2) \omega$. 
\end{thm}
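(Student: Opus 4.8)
The plan is to realise $(M,\omega)$ as a co-adjoint orbit of $G=SO(2p,q)$ and then invoke Theorem \ref{thm::intro_specialonsemisimplecoadjoint}, for which the only substantial point is the proportionality $4\pi c_1=\lambda[\omega]$ together with the value $\lambda=2p-2q-2$. Using the Killing form to identify $\mathfrak g^*\cong\mathfrak g$, I would take $\xi=\diag(J_0,0)\in\mathfrak{so}(2p,q)$, where $J_0$ is the standard complex structure on the definite $2p$-dimensional block and $0$ is the zero endomorphism of the $q$-dimensional block. A direct check shows that the centraliser of $\xi$ is exactly $U(p)\times SO(q)$: on the $2p$-block the elements commuting with $J_0$ form $U(p)$, while $\xi$ is trivial on the $q$-block. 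Hence $G\cdot\xi$ is diffeomorphic to $G/(U(p)\times SO(q))=M$, its isotropy is compact, and since $G$ is semi-simple with finite centre and $U(p)\times SO(q)$ is a proper subgroup, the isotropy contains no non-discrete normal subgroup of $G$. All hypotheses of Theorem \ref{thm::intro_specialonsemisimplecoadjoint} are then met once the Chern class condition is verified, and the existence of the special almost complex structure follows.

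To compute $\lambda$ I would pass to the complexification $\mathfrak{so}(2p+q,\mathbf C)$ with a Cartan subalgebra spanned by coordinates $e_1,\dots,e_n$, arranged so that $e_1,\dots,e_p$ diagonalise the $\mathfrak u(p)$-block and the remaining coordinates diagonalise $\mathfrak{so}(q)$. With this choice $\mathfrak k^{\mathbf C}=\mathfrak{gl}(p,\mathbf C)\oplus\mathfrak{so}(q,\mathbf C)$ collects the roots $e_i-e_j$ $(i,j\le p)$ and the roots of $\mathfrak{so}(q)$, all of which vanish on $\xi=e_1+\dots+e_p=:\Sigma$, whereas the roots appearing in $\mathfrak m^{\mathbf C}$ are the \emph{compact} ones $e_i+e_j$ $(i<j\le p)$, coming from $\mathfrak{so}(2p)/\mathfrak u(p)$, and the \emph{non-compact} ones $e_i\pm e_a$ $(i\le p<a)$ together with $e_i$ $(i\le p)$ when $q$ is odd, coming from the $(-1)$-eigenspace of the Cartan involution. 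The class $[\omega]$ of the canonical form corresponds under this identification to $\xi=\Sigma$.

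Since $[\rho]=4\pi c_1$, formula \eqref{eq::intro_rhohom} lets me read off $[\rho]$ from root data as a weighted sum of the roots of $\mathfrak m$. The essential point, and the step I expect to be most delicate, is that the Killing form has opposite signs on the compact and non-compact root vectors, so these two families enter the trace in \eqref{eq::intro_rhohom} with opposite signs: the compact roots contribute $+\alpha$ and the non-compact ones $-\alpha$. Summing each family of positive roots gives
\begin{equation}
\sum_{i<j\le p}(e_i+e_j)=(p-1)\Sigma,\qquad \sum_{\text{non-compact}}\alpha=q\Sigma,
\end{equation}
where in the second sum the $\mathfrak{so}(q)$-coordinates cancel in pairs and the total coefficient $q$ is uniform in the parity of $q$. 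Taking into account the universal normalisation factor $2$ relating $[\rho]$ to $\sum_{\mathfrak m^+}\alpha$, this yields $4\pi c_1=2\big((p-1)-q\big)\Sigma=(2p-2q-2)\,[\omega]$, whence $\lambda=2p-2q-2$.

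I would finally sanity-check the sign conventions against the twistor case $q=1$, $p=n$: there $\mathfrak{so}(q)=0$, the non-compact family is just $e_1,\dots,e_n$ with sum $\Sigma$, and the formula returns $2\big((n-1)-1\big)=2n-4$, in agreement with the twistor-space statement. The main obstacle is thus not the geometry but the careful bookkeeping of signs and the overall normalisation constant in \eqref{eq::intro_rhohom}, so that the compact/non-compact dichotomy produces exactly $(p-1)-q$ and the factor $2$ is pinned down, leaving $\lambda$ determined rather than ambiguous up to sign or scale.
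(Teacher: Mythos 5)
Your proposal is correct, and it reaches the paper's constant by a genuinely different computation. The skeleton is the same: you take exactly the paper's orbit element $V=\diag(J_0,0)$, identify the isotropy with $U(p)\times SO(q)$, and reduce everything to the proportionality $4\pi c_1=\lambda[\omega]$, which is how theorem \ref{thm::existencespecialsemisimple} is set up. But where the paper stays entirely real and explicit --- it splits $\mathfrak m=\mathfrak m^+\oplus\mathfrak m^-$ by the sign of the rescaled Killing form $B$, builds $H X=[V,X^+]-\frac12[V,X^-]$ from the eigenvalue data $\lambda_i=1$, $\lambda_{ij}=2$, and computes the element $V'=2\sum_i[u_i,v_i]_{\mathfrak g}$ in concrete bases to find $V'=(2p-2q-2)V$ --- you complexify and read the class of $\rho$ off the root system, as $2\bigl(\sum_{\mathrm{compact}^+}\alpha-\sum_{\mathrm{noncompact}^+}\alpha\bigr)=2\bigl((p-1)-q\bigr)\Sigma$. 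Your sign dichotomy is exactly the paper's $\varepsilon_i$ in $H=\frac{\varepsilon_i}{\lambda_i}\ad_V$: compatibility $\sigma(X,HX)>0$ forces $H$ to rotate with opposite orientations on the $B$-positive and $B$-negative blocks, so $H$ acts by $\pm i$ on root spaces according to compact/non-compact type, and the eigenvalues $\alpha(V)$ cancel --- which is why only the signed root sum survives. Your route buys generality and brevity: it is a Koszul-type formula valid for any such orbit, and your sums $(p-1)\Sigma$ and $q\Sigma$ (uniform in the parity of $q$) are right, as is the consistency check against the twistor case $q=1$. What the paper's route buys is precisely the point you flag as delicate: the overall factor $2$ and the sign conventions are pinned down unambiguously by the real computation of $V'$, with no complexification bookkeeping, and the injectivity of $\mathfrak z\to H^2_{dR}(M)$ (lemma \ref{lem::injcohomology}) is what makes $\lambda$ well defined from the cohomological hypothesis. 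One small economy you could claim: since your root computation actually produces $\zeta=\lambda\theta$ pointwise (not merely in cohomology), the constructed $H$ is already special and the appeal to theorem \ref{thm::intro_specialonsemisimplecoadjoint} is not needed --- which is in fact how the paper itself proceeds in this example.
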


In this situation, the symplectic form and the special almost complex structure descend to the quotient by any discrete torsion-free subgroup $\Gamma \subset G$ and give rise to a symplectic general type, symplectic Calabi-Yau, or symplectic Fano manifold depending on the sign of $p-q-1$.
Compact quotients exist \cite{Borel1963}, and they are of non-K\"ahler type whenever $p>1$ and $q \neq 2$ (cfr. subsection \ref{subsec::quotienttwistorsandperioddomains}).

\bigskip

The paper is organized as follows. 
Section \ref{sec::specialcacs} is devoted to recall some well known facts about compatible almost complex structures on symplectic manifolds and to introduce special compatible almost complex structures.
In section \ref{sec::homsympman} is developed some theory of homogeneous almost complex structures on symplectic manifold. 
Finally, in section \ref{sec::applications} this theory is applied to some symplectic Lie group and co-adjoint orbits, and (compact) quotients of these manifolds are discussed.

\section{Special compatible almost complex structures}\label{sec::specialcacs}

In order to fix notation, here we recall some well known basic facts about symplectic manifolds and compatible almost complex structures on them.
At the end of this section we will define compatible almost complex structures having special curvature properties. 
These structures will constitute our main topic.

Let $(M,\omega)$ be a symplectic manifold of dimension $2n$ endowed with a compatible almost complex structure $J$.
By definition, $M$ is a differentiable $2n$-fold equipped with a closed two-form $\omega$ such that $\omega^n$ is nowhere vanishing on $M$.
Moreover, $J$ is a smooth section of the vector bundle $\End(TM)$ such that $-J^2$ is the identity endmorphism of $TM$, and for any pair of non-zero vectors $X,Y$ which are tangent at the same point of $M$ it holds
\begin{equation}
\omega(JX,JY) = \omega(X,Y), \qquad \omega(X,JX)>0.
\end{equation}
To any such $J$, is then associated a Riemannian metric $g$ defined by
\begin{equation*}
g(X,Y) = \omega(X,JY).
\end{equation*}
Sometimes, the compatible almost complex structure $J$ may come from a holomorphic atlas on $M$.
A celebrated theorem of Newlander and Nirenberg \cite{NewlanderNirenberg1953} states that the only obstruction for that is constituted by the Nijenhuis tensor $N$ of $J$, which is a $TM$-valued two-form on $M$ defined by
\begin{equation}
4 N(X,Y)= [JX,JY]-J[JX,Y]-J[X,JY]-[X,Y].
\end{equation}
If $N$ is identically zero, then $J$ is said to be integrable.
Therefore, when $J$ is integrable we are in the familiar situation of K\"ahler manifolds.
Under these circumstances, $(M,J)$ is a complex manifold by Newlander-Nirenberg theorem, and $g$ turns out to be a K\"ahler metric on it with K\"ahler form $\omega$.
In this paper we are primarily interested in the non-integrable case, when the metric $g$ is said to be almost-K\"ahler.

To any compatible almost-complex structure $J$ as above, beside the Levi-Civita connection $D$ of the metric $g$, is associated the Chern connection $\nabla$, that is the unique metric connection on $(M,g)$ whose torsion is equal to the Nijenhuis tensor $N$.
Clearly $\nabla$ coincides with $D$ if and only if $J$ is integrable.
In general, $D$ and $\nabla$ are related by the identity
\begin{equation}\label{eq::nablaintermsofD}
\nabla_X = D_X - \frac{1}{2}J(D_XJ),
\end{equation}
as can be easily checked by the Koszul formula.
The Chern connection is invariant under the action of symplectic biholomorphisms, that is diffeomorphisms of $M$ that preserve both $\omega$ and $J$.
As a consequence one has the following result, which we highlight here for future reference.
\begin{lem}\label{lem::LiederCherncon}
If $X$ is a symplectic and holomorphic vector field on $M$ then
\begin{equation}
[X,\nabla_YZ] = \nabla_{[X,Y]}Z + \nabla_Y[X,Z]
\end{equation}
holds for all vector fields $Y,Z$ on $M$.
\end{lem}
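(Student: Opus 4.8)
The identity to be proven is exactly the infinitesimal statement that the Lie derivative $\mathcal L_X$ acts as a derivation compatible with $\nabla$, so the natural strategy is to differentiate the invariance of $\nabla$ under the flow of $X$. Since $X$ is symplectic and holomorphic, its (local) flow $\phi_t$ consists of diffeomorphisms preserving both $\omega$ and $J$, hence preserving the associated metric $g$ and the Nijenhuis tensor $N$; by the uniqueness characterisation of the Chern connection recalled above, each $\phi_t$ is therefore an affine transformation for $\nabla$. Only local flows are needed, as we differentiate solely at $t=0$.

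Explicitly, the invariance of $\nabla$ under $\phi_t$ reads
\begin{equation*}
\nabla_{(\phi_{-t})_* Y}\bigl((\phi_{-t})_* Z\bigr) = (\phi_{-t})_*\bigl(\nabla_Y Z\bigr)
\end{equation*}
for all vector fields $Y, Z$. I would then differentiate both sides at $t = 0$. On the right-hand side this yields $\mathcal L_X(\nabla_Y Z) = [X, \nabla_Y Z]$, directly from the definition of the Lie derivative of a vector field. On the left-hand side, writing $A_t = (\phi_{-t})_* Y$ and $B_t = (\phi_{-t})_* Z$, smooth families with $A_0 = Y$, $B_0 = Z$, $\dot A_0 = [X,Y]$ and $\dot B_0 = [X,Z]$, the Leibniz rule for the bilinear operator $\nabla$ gives $\nabla_{\dot A_0} B_0 + \nabla_{A_0} \dot B_0 = \nabla_{[X,Y]} Z + \nabla_Y [X,Z]$. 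Equating the two derivatives produces the claimed identity.

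The step requiring the most care, and the one I expect to be the main obstacle, is the justification of this Leibniz rule, since $\nabla$ is a first-order differential operator in its second argument. I would split the difference quotient as $\nabla_{A_t}B_t - \nabla_{A_0}B_0 = \nabla_{A_t - A_0}B_t + \nabla_{A_0}(B_t - B_0)$, divide by $t$, and pass to the limit. This requires that $(B_t - B_0)/t$ converge to $\dot B_0 = [X,Z]$ not merely pointwise but in $C^1$, so that $\nabla_{A_0}$ applied to it converges; the analogous but easier continuity in the first slot follows from the $C^\infty(M)$-linearity of $\nabla$ there. The needed regularity is a consequence of the joint smoothness of the flow in $(t,p)$, and is the only genuinely analytic point in the argument.

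As an independent check on the algebra, one can verify that the expression $A(Y,Z) := [X, \nabla_Y Z] - \nabla_{[X,Y]} Z - \nabla_Y [X, Z]$ is $C^\infty(M)$-linear in both $Y$ and $Z$, the function-dependent terms cancelling by means of $[X, fY] = f[X,Y] + (Xf)Y$ and the identity $X(Yf) - Y(Xf) = [X,Y]f$, so that its vanishing is a pointwise, tensorial assertion. This reduction alone does not prove the lemma, however, since the hypothesis that $X$ be symplectic and holomorphic must still enter, and it does so precisely through the invariance of $\nabla$ exploited above.
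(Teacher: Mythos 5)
Your proof is correct, but it takes a genuinely different route from the paper's. The paper argues by direct computation: starting from $L_X\omega=0$ it expands $X\left(\omega(\nabla_YZ,W)\right)$ using a Koszul-type formula for the Chern connection, which produces, besides the three terms of the claimed identity, a block of Nijenhuis-tensor terms; these vanish because $L_XJ=0$ implies $L_XN=0$, and the conclusion follows from the arbitrariness of $W$ and nondegeneracy of $\omega$. You instead integrate the hypothesis: the (local) flow $\phi_t$ of $X$ preserves $\omega$ and $J$, hence $g$ and $N$, so by the paper's uniqueness characterisation of $\nabla$ (the metric connection with torsion $N$) each $\phi_t$ is affine, and differentiating the invariance $\nabla_{(\phi_{-t})_*Y}\bigl((\phi_{-t})_*Z\bigr)=(\phi_{-t})_*(\nabla_YZ)$ at $t=0$ yields exactly $L_X\nabla=0$, i.e.\ the lemma. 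Your approach is more conceptual and more general --- it shows that any connection canonically attached to structures preserved by the flow is automatically invariant, with no Koszul computation --- at the price of the analytic justification of the Leibniz differentiation, which you handle correctly (and which can be streamlined further: $\phi_t^*\nabla-\nabla$ is a smooth one-parameter family of tensor fields vanishing identically, so its $t$-derivative vanishes, which is precisely the tensorial expression $A(Y,Z)$ from your last paragraph). The paper's computation, by contrast, is elementary and self-contained, and makes visible exactly where each hypothesis enters: $L_X\omega=0$ in the expansion and $L_XN=0$ to kill the torsion terms. One small point worth making explicit in your version: the uniqueness of the Chern connection localizes (the Koszul formula determines it on any open set), so invoking it for the merely local flows of a possibly incomplete $X$ is legitimate, as you tacitly assume.
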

\begin{proof}
Since $X$ is symplectic, by definition one has $L_X\omega=0$.
Hence, for all vector field $W$ on $M$ one has
\begin{equation}\label{eq::omegaLieXnabla}
\omega([X,\nabla_YZ],W) = X\left(\omega(\nabla_YZ,W)\right)
- \omega(\nabla_YZ,[X,W]).
\end{equation}
Therefore a straightforward calculation involving Koszul formula yields
\begin{multline}
X\left(\omega(\nabla_YZ,W)\right)
= \omega(\nabla_YZ,[X,W]) 
+ \omega(\nabla_{[X,Y]}Z,W)
+ \omega(\nabla_Y[X,Z],W) \\
+ \omega\left([X,N(W,Z)],Y\right)
- \omega\left(N(W,[X,Z]),Y\right)
- \omega\left(N([X,W],Z),Y\right).
\end{multline}
Since $X$ is holomorphic, $L_XJ=0$, hence $L_XN=0$.
As a consequence the second line vanishes.
Therefore the thesis follows by the arbitrariness of $W$ after substituting in \eqref{eq::omegaLieXnabla}.
\end{proof}

A consequence of \eqref{eq::nablaintermsofD} above is that both $J$ and $\omega$ are parallel with respect to $\nabla$.
In particular, $\nabla$ is a complex-linear connection on the complex vector bundle $(TM,J)$.
Thus Chern-Weil theory apply, and the Chern classes of $(TM,J)$ can be expressed in terms of the curvature $R \in \Omega^2(\End(TM))$ of $\nabla$, which is defined by
\begin{equation}
R(X,Y) = \nabla_X \nabla_Y - \nabla_Y \nabla_X - \nabla_{[X,Y]}.
\end{equation}
As it is well known, the space of all compatible almost complex structures on $(M,\omega)$ is contractible, for it can be realized as a space of sections of a fiber bundle over $M$ with contractible fiber. 
As a consequence, the Chern classes $c_k \in H_{dR}^{2k}(M)$ of $(TM,J)$ in fact do not depend on the compatible almost-complex structure $J$ chosen.
For this reason $c_k$ is called the $k$-th Chern class of $(M,\omega)$.
In this paper we will deal just with the first Chern class $c_1 \in H_{dR}^2(M)$.

The cohomology class $4\pi c_1$ can be represented by the Chern-Ricci form of $J$, that is the two form $\rho$ on $M$ defined by
\begin{equation*}
\rho(X,Y) = \tr(JR(X,Y)),
\end{equation*}
where $\tr$ denotes the real trace.
When $J$ is integrable, $\rho$ is nothing but (twice) the Ricci form of the K\"ahler metric $g$.

Attached to $J$ there is also a smooth function $s$ on $M$ called the Hermitian scalar curvature of $J$, which is defined by the identity
\begin{equation}
s \omega = n \rho \wedge \omega^{n-1}.
\end{equation}
In case $M$ is closed, i.e. compact with no boundary, it follows readily integrating over $M$ that the average of $s$ depends just on the cohomology classes $[\omega],c_1 \in H_{dR}^2(M)$.

As one may expect, by \eqref{eq::nablaintermsofD} one can express $R$, $\rho$ and $s$ in terms of more familiar Riemannian quantities related to the metric $g$ and $DJ$ or $N$.
The upshot for the Hermitian scalar curvature is the identity
\begin{equation}\label{eq::s=scal+2snormN}
s = \scal + 2|N|^2,
\end{equation}
where $\scal$ denotes the scalar curvature of the Riemannian metric $g$, and $|N|$ is the norm of the tensor $N$ with respect to $g$. 

Our main interest here is in compatible almost-complex structures $J$ having a special curvature property which we now introduce. This kind of curvature properties was previously considered by Apostolov and Dragichi \cite[Section 7]{ApoDra2003}.
\begin{defn}\label{defn::specialJ}
A compatible almost complex structure $J$ on $(M,\omega)$ is called \emph{special} if there exists $\lambda \in \mathbf R$ such that
the Chern-Ricci form of $J$ satisfies
\begin{equation}\label{eq::specialequation}
\rho = \lambda \omega.
\end{equation}
\end{defn}
A number of comments to this definition are in order.
It is clear that the existence of a special compatible almost complex structure poses a cohomological constraint on $(M,\omega)$.
Indeed, if such a structure exists, then by discussion above $c_1, [\omega] \in H_{dR}^2(M)$ must satisfy
\begin{equation}
4 \pi c_1 = \lambda [\omega].
\end{equation}
In particular, special compatible almost complex structures with $\lambda=0$ or $\lambda>0$ may exist just on so-called symplectic Calabi-Yau  or symplectic Fano manifolds respectively. 

If $J$ is special, then it has constant Hermitian scalar curvature equal to $n\lambda$.
Nonetheless, it follows by \eqref{eq::s=scal+2snormN} that the scalar curvature of the associated Riemannian metric $g$ is non-constant unless $|N|$ is constant as well.

If $J$ is special and integrable, then $g$ is K\"ahler-Einstein. Therefore equation \eqref{eq::specialequation} generalizes the K\"ahler-Einstein equation.
However it should be noted that the Riemannian metric $g$ associated with a special compatible almost complex structure is not Einstein in general.
For example, this happens whenever the norm of the Nijenhuis tensor $|N|$ is non-constant, as can be readily checked by \eqref{eq::s=scal+2snormN}. 
Furthermore, under the additional assumption that the symplectic manifold is closed, one may expect that $g$ is Einstein if and only if $J$ is integrable.
This might be true even dropping the assumption that $J$ is special.
Such more general expectation goes under the name of Goldberg conjecture, which states that, on a closed symplectic manifolds, a compatible almost complex structure $J$ having Einstein associated Riemannian metric $g$ is integrable \cite{Goldberg1969}.
While the general case is open at the present time, this conjecture has been proved by Sekigawa assuming the scalar curvature of $g$ non-negative \cite{Sekigawa1987}.
Later, the assumption on the scalar curvature has been relaxed a bit by Dr\u aghici, who proved the conjecture assuming the inequality $c_1 \cup [\omega]^{n-1} \geq 0$ instead of the non-negativeness of the scalar curvature \cite[Theorem 1]{Draghici2005}.
As a consequence one has the following integrability criterion for special almost complex structures on symplectic Calabi-Yau and symplectic Fano manifolds

\begin{prop}\label{prop::nonintnonEinst}
Let $J$ be a compatible almost complex structure on a closed symplectic manifold satisfying $\rho = \lambda \omega$ for some $\lambda \geq 0$.
If the associated Riemannian metric is Einstein, then $J$ is integrable. 
\end{prop}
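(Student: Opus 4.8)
The plan is to reduce the statement to Dr\u aghici's version of the Goldberg conjecture \cite[Theorem 1]{Draghici2005}, which asserts that on a closed symplectic manifold a compatible almost complex structure whose associated metric is Einstein and which satisfies $c_1 \cup [\omega]^{n-1} \geq 0$ must be integrable. The Einstein hypothesis is handed straight to that theorem, so the only thing left to verify is that the special condition $\rho = \lambda\omega$ with $\lambda \geq 0$ forces the cohomological inequality.

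First I would pass to cohomology. Since $\rho$ represents $4\pi c_1$ and satisfies \eqref{eq::specialequation}, one gets $4\pi c_1 = \lambda[\omega]$ in $H_{dR}^2(M)$. Cupping with $[\omega]^{n-1}$ and pairing against the fundamental class of the closed $2n$-manifold $M$ yields
\begin{equation*}
\int_M c_1 \cup [\omega]^{n-1} = \frac{\lambda}{4\pi}\int_M [\omega]^n = \frac{\lambda}{4\pi}\int_M \omega^n.
\end{equation*}
Because $J$ is compatible with $\omega$, the top power $\omega^n$ is a positive volume form, so $\int_M \omega^n > 0$; together with $\lambda \geq 0$ this gives $c_1 \cup [\omega]^{n-1} \geq 0$. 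I would then simply invoke \cite[Theorem 1]{Draghici2005} applied to our $J$ to conclude integrability.

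I expect no serious obstacle, since all the hard analytic content is imported from Dr\u aghici's theorem. The one point demanding care is the normalisation: here $\rho$ represents $4\pi c_1$ (not $2\pi c_1$), so I must track this factor to be sure that $\lambda \geq 0$ translates into $c_1 \cup [\omega]^{n-1} \geq 0$ with the correct sign. It is worth emphasising \emph{why} a global input of Goldberg type is genuinely needed: if $g$ is Einstein then $\scal$ is constant, and since $s = n\lambda$ is constant for a special $J$, the identity \eqref{eq::s=scal+2snormN} gives only that $|N|^2 = \tfrac12(n\lambda - \scal)$ is constant. This is far short of $N = 0$, so the purely local scalar-curvature computation cannot by itself force integrability, and the sign restriction $\lambda \geq 0$ enters precisely through the cohomological hypothesis of Dr\u aghici's theorem.
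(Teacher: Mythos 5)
Your proposal is correct and follows exactly the paper's route: the proposition is stated there as an immediate consequence of Dr\u aghici's theorem \cite[Theorem 1]{Draghici2005}, with the hypothesis $\rho=\lambda\omega$, $\lambda\geq 0$, yielding $4\pi c_1=\lambda[\omega]$ and hence the required inequality $c_1\cup[\omega]^{n-1}\geq 0$ upon cupping with $[\omega]^{n-1}$, just as you argue. Your closing remark on why the local identity \eqref{eq::s=scal+2snormN} only gives constancy of $|N|^2$ is a useful clarification but not part of the paper's (implicit) proof.
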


On the other hand, at least when $\lambda \leq 0$, any closed symplectic manifold admitting an integrable compatible almost complex structure has also a special one, in fact integrable and hence K\"ahler-Einstein.
More precisely we have the following
\begin{prop}\label{prop::existspecialifint}
Let $(M,\omega)$ be a closed symplectic manifold such that $4\pi c_1 = \lambda [\omega]$ for some real $\lambda \leq 0$, and assume there exists an integrable compatible almost complex structure $J_0$. Then there exists a compatible integrable almost complex structure $J$ on $(M,\omega)$ such that $\rho = \lambda \omega$.
\end{prop}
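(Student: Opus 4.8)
The plan is to reduce the statement to the classical existence theory for K\"ahler-Einstein metrics and then transport the resulting structure back to $(M,\omega)$ by a Moser-type argument. Since $J_0$ is integrable and compatible with $\omega$, the pair $(M,J_0)$ is a closed complex manifold on which $[\omega]$ is a K\"ahler class and, by hypothesis, the first Chern class equals $\frac{\lambda}{4\pi}[\omega]$, a nonpositive multiple of a K\"ahler class. This is exactly the situation handled by Aubin and Yau: when $\lambda<0$ the canonical class of $(M,J_0)$ is a positive multiple of a K\"ahler class and $(M,J_0)$ carries a unique K\"ahler-Einstein metric \cite{Aubin1978, Yau1978}, while when $\lambda=0$ one has $c_1=0$ and Yau's solution of the Calabi conjecture provides a Ricci-flat K\"ahler metric in every K\"ahler class \cite{Yau1978}.

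Concretely, I would produce a K\"ahler-Einstein metric on $(M,J_0)$ whose K\"ahler form $\omega_{KE}$ has Chern-Ricci form $\rho_{KE}$ satisfying $\rho_{KE}=\lambda\,\omega_{KE}$. The crucial point is that $\omega_{KE}$ can be arranged to lie in the cohomology class $[\omega]$. When $\lambda=0$ this is ensured by choosing the K\"ahler class to be $[\omega]$ from the outset. When $\lambda<0$ it is forced: taking cohomology classes in the Einstein equation gives $4\pi c_1=[\rho_{KE}]=\lambda[\omega_{KE}]$, which, combined with the hypothesis $4\pi c_1=\lambda[\omega]$ and $\lambda\neq 0$, yields $[\omega_{KE}]=[\omega]$.

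It then remains to move $\omega_{KE}$ onto $\omega$ without disturbing the curvature normalization. Since $\omega_{KE}$ and $\omega$ are cohomologous symplectic forms on the closed manifold $M$, Moser's stability theorem supplies a diffeomorphism $\phi$ of $M$ with $\phi^*\omega_{KE}=\omega$. I would then set $J=\phi^*J_{KE}$, the pulled-back complex structure, given pointwise by $(d\phi)^{-1}J_{KE}\,d\phi$. Pullback preserves both integrability and compatibility, so $J$ is an integrable compatible almost complex structure on $(M,\omega)$, whose associated metric is $\phi^*g_{KE}$. Because the Chern connection and its curvature are natural under diffeomorphisms, the Chern-Ricci form of $J$ is the pullback of that of $J_{KE}$; hence $\rho=\phi^*\rho_{KE}=\lambda\,\phi^*\omega_{KE}=\lambda\,\omega$, as desired.

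The main obstacle is conceptual rather than computational: one must guarantee that the K\"ahler-Einstein form lands in the prescribed class $[\omega]$, so that Moser applies, and one must invoke the naturality of the Chern-Ricci form under diffeomorphisms. Both points are handled above. It is precisely the hypothesis $\lambda\leq 0$ that renders the Aubin-Yau and Yau existence results unconditional; for $\lambda>0$ the analogous step would require a stability assumption on $(M,J_0)$ in the sense of Chen-Donaldson-Sun and Tian, as anticipated in the introduction.
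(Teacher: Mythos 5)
Your proposal is correct and follows essentially the same route as the paper: produce a K\"ahler--Einstein form $\omega_{KE}\in[\omega]$ on $(M,J_0)$ via Aubin--Yau (with the class argument for $\lambda<0$ versus the choice of class for $\lambda=0$, exactly as needed), transfer it back with Moser, and take $J=(d\phi)^{-1}J_0\,d\phi$, using naturality of the Chern--Ricci form. The one point to tighten is your appeal to Moser: the stability theorem requires a smooth path of cohomologous symplectic forms, not merely two cohomologous endpoints, and in general cohomologous symplectic forms need not be symplectomorphic; here the path comes for free, since $\omega$ and $\omega_{KE}$ are K\"ahler forms for the \emph{same} complex structure $J_0$, so the convex combination $\omega_t=(1-t)\,\omega+t\,\omega_{KE}$ consists of symplectic forms in the fixed class --- which is precisely the path the paper's proof invokes.
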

\begin{proof}
Fix the complex structure $J_0$ on $M$.
By Aubin's and Yau's results on K\"ahler-Einstien metrics of non-positive scalar curvature \cite{Aubin1978,Yau1978}, one can find a smooth path of cohomologous symplectic forms $\omega_t$ defined for $t \in [0,1]$ such that $\omega_0=\omega$, and $\omega_1$ satisfies $\rho_1 = \lambda \omega_1$.
By Moser's stability theorem, there exists a path of diffeomorphisms $\phi_t$ of $M$ such that $\phi_0=id$, and $\phi_t^*\omega_t = \omega$ for all $t \in [0,1]$. As a consequence, $J = d\phi_1^{-1} J_0 d\phi_1$ is an integrable complex structure as in the statement.
\end{proof}

After the celebrated work of Chen, Donaldson and Sun \cite{ChenDonaldsonSun2015} and Tian \cite{Tian2015}, we know that a Fano manifold admits a K\"ahler-Einstein metric if and only if it is K-polystable. 
Therefore, proposition above can be extended to the case $\lambda>0$ if in addition $J_0$ is assumed to be $K$-polystable. 

In view of this discussion, we are primarily interested in symplectic manifolds admitting no integrable compatible almost-complex structures at all, where the existence of special compatible almost complex structures is not known in general.

\section{Homogeneous symplectic manifolds}\label{sec::homsympman}

The general theory of homogeneous symplectic manifold has been
developed by Chu \cite{Chu1974}. See also the work of Sternberg on this topic \cite{Sternberg1975}.
Here we consider a homogeneous symplectic manifold endowed with an additional structure, namely a homogeneous compatible almost complex structure. The aim of this section is showing how the Chern connection and several quantities related to it can be expressed in terms of algebraic structures on the relevant Lie algebra.

\subsection{General setup}\label{subsec::gensetup}

Let $(M,\omega)$ be a connected symplectic manifold endowed with a compatible almost complex structure $J$, and let $g$ be the associated Riemannian metric.
Assume that $\omega$ and $J$ are homogeneous, that is, there is a closed Lie group $G$ that acts transitively on $M$ by holomorphic and symplectic diffeomorphisms. This means that for all $\phi \in G$ it holds
\begin{equation}
\phi^*\omega = \omega, \qquad Jd\phi = d\phi J.
\end{equation}
From this it follows directly that any $\phi \in G$ is also an isometry of $g$.
Therefore $(M,g)$ turns out to be a $G$-homogeneous Riemannian manifold.

A good situation to have in mind is when $G$ is a closed subgroup of $\Diff(M)$ constituted by holomorphic and symplectic diffeomorphisms.
In this case, the action is effective.
This means that the identity is the only element of $G$ that acts as the identity on $M$.
Unfortunately, not all the applications we are interested in fall directly within this case.
Therefore, in order to match ease of exposition and flexibility for applications, we make the additional assumption that 
\begin{center}
the action of $G$ on $M$ is \emph{almost effective},
\end{center} meaning that there are no non-discrete normal subgroups of $G$ that act trivially on $M$. 
Under this assumption the isotropy subgroups need not to be compact, but we can always suppose they are.
To see this, fix once for all a point $x \in M$, and let $K \subset G$ be the isotropy subgroup at $x$, that is the set of all $\phi \in G$ such that $\phi(x)=x$.
Now consider the maximal normal subgroup $N$ of $G$ contained in $K$, and note that $N$ acts trivially on $M$. Thus $G'=G/N$ acts effectively on $M$ with isotropy at $x$ given by $K'=K/N$.
Therefore $M$ is diffeomorphic to both the coset spaces $G/K$ and $G'/K'$.
The advantage of introducing $K'$ is that it is compact whilst $K$ in general is not.
In fact, since $G'$ acts by isometries on $M$, the isotropy representation of $K'$ at $x$ is an isomorphism with a closed subgroup of the orthogonal group of $T_xM$.
Moreover, the kernel of the isotropy action of $K$ at $x$ is precisely $N$.
On the other hand, our assumption that $G$ acts almost effectively on $M$ guarantees that $N$ is discrete, whence the Lie algebras of $G$ and $K$ coincide with those of $G'$ and $K'$.
Since, as we will show, the almost-K\"ahler geometry of $(M,\omega,J)$ is completely described in terms of the Lie algebras of $G$ and $K$, and their adjoint actions, the upshot is that we can work with $G$ and $K$ as in case the action of $G$ were effective and $K$ were compact.

Let now $\mathfrak g$ be the Lie algebra of $G$. Given $X\in\mathfrak g$, consider the one-parameter group of diffeomorphisms of $M$ defined by
\begin{equation}
\phi_t(z) = e^{tX}z.
\end{equation}
We denote by $X$ the infinitesimal generator of $\phi_t$ as well.
Hence, our assumptions on $G$ imply that $X$ is a symplectic ($L_X\omega=0$), holomorphic ($L_XJ=0$) and Killing ($L_Xg=0$) vector field on $M$.
One can check that this identification of $\mathfrak g$ with a Lie subalgebra of vector fields on $M$ is an anti-homomorphism, that is the Lie bracket in $\mathfrak g$ and the Lie bracket of vector fields are related by 
\begin{equation}\label{eq::relationbetweenbrackets}
[X,Y]_{\mathfrak g} = -[X,Y].
\end{equation}

Now consider the Lie subalgebra $\mathfrak k$ of $K$ inside $\mathfrak g$.
Under the identification above, $\mathfrak k$ is constituted by vector fields $X$ which are symplectic and holomorphic and vanishes at $x$.
Consider the adjoint action of $K$ on $\mathfrak g$.
Since $K$ is compact one can find a $K$-invariant subspace $\mathfrak m \subset \mathfrak g$ such that 
\begin{equation}
\mathfrak g = \mathfrak k \oplus \mathfrak m.
\end{equation}
Fix once for all such an $\mathfrak m$. Evaluating at $x$ any vector field $X \in \mathfrak g$ gives an isomorphism between $\mathfrak m$ and $T_xM$ that intertwines the isotropy action of $K$ at $x$ with the adjoint action of $K$ on $\mathfrak m$.
Thanks to this ismomorphism one can endow $\mathfrak m$ with a symplectic form $\sigma$ and a complex structure $H$ by letting
\begin{equation}
\sigma(X,Y)=\omega_x(X,Y), \qquad HX=J_xX.
\end{equation}
Also a scalar product on $\mathfrak m$ is defined by letting $\langle X,Y \rangle = g_x(X,Y)$, and it satisfies $\langle X,Y \rangle=\sigma(X,HY)$ thanks to compatibility of $J$ with $\omega$.
Since $\omega$ and $J$ are $G$-invariant and $x$ is fixed by $K$, the symplectic form $\sigma$ and the complex structure $H$ are invariant with respect to the adjoint action of $K$ on $\mathfrak m$. As a consequence, the resulting scalar product on $\mathfrak m$ is $K$-invariant as well.
Moreover, the closedness of $\omega$ implies that
\begin{equation}\label{eq::closedsigma}
%
\sigma([X,Y]_{\mathfrak m},Z)+\sigma([Y,Z]_{\mathfrak m},X)+\sigma([Z,X]_{\mathfrak m},Y)=0
\end{equation}
for all $X,Y,Z \in \mathfrak m$. Here $[X,Y]_{\mathfrak m}$ denotes the projection to $\mathfrak m$ of the Lie algebra bracket $[X,Y]_{\mathfrak g}$.
This follows by substituting the $G$-invariance condition
\begin{equation}
X\omega(Y,Z)=\omega([X,Y],Z)+\omega(Y,[X,Z]).
\end{equation}
into the closedness condition
\begin{multline}
X\omega(Y,Z)-Y\omega(X,Z)+Z\omega(X,Y) \\
-\omega([X,Y],Z)-\omega(Y,[X,Z])
+\omega(X,[Y,Z])=0,
\end{multline}
and evaluating at the point $x$.

\subsection{Chern-Ricci form}

The $G$-invariance of the Riemannian metric $g$ implies the $G$-invariance of the Riemann curvature.
In the same way, the $G$-invariance of $\omega$ and $J$ imply the $G$-invariance of the Nijenhuis tensor $N$ and the Chern curvature $R$ of $J$.
As a consequence both of them can be expressed in terms of the symplectic form $\sigma$, the complex structure $H$ on $\mathfrak m$, and the Lie algebra structure of $\mathfrak g$.
Here and in the next sub-section we will show how $N$, the Chern-Ricci form $\rho$, and the Hermitian scalar curvature are related to $\sigma$ and $H$.

We start with the following lemma, which is well known \cite[Poposition 7.28]{Besse1987}, and we include here a proof for convenience of the reader.
\begin{lem}\label{lem:LeviCivitasymm}
Let $X,Y,Z$ be symplectic holomorphic vector fields in $\mathfrak m$. Then at the point $x$ the Levi-Civita connection $D$ of $g$ satisfies
\begin{equation}\label{eq::homLeviCivita}
(D_XY)_x = -\frac{1}{2}[X,Y]_{\mathfrak m} + U(X,Y),
\end{equation}
where $U$ is the symmetric and bilinear binary operation on $\mathfrak m$ defined by
\begin{equation}
\langle U(X,Y),Z\rangle = \langle[Z,X]_{\mathfrak m},Y\rangle + \langle X,[Z,Y]_{\mathfrak m}\rangle.
\end{equation}
\end{lem}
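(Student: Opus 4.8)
The plan is to read off $(D_XY)_x$ from the Koszul formula, testing it against an arbitrary $Z\in\mathfrak m$ and invoking non-degeneracy of $\langle\cdot,\cdot\rangle$ only at the end. Since $X,Y,Z$ are the Killing fields attached to elements of $\mathfrak g$, I would start from the pointwise identity
\[
2g(D_XY,Z)=Xg(Y,Z)+Yg(Z,X)-Zg(X,Y)+g([X,Y],Z)-g([X,Z],Y)-g([Y,Z],X),
\]
where the brackets are those of vector fields.

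First I would dispose of the three derivative terms, which do \emph{not} vanish because the functions $g(Y,Z)$ are generally non-constant on $M$. This is precisely where homogeneity enters: each $X\in\mathfrak g$ is Killing, so $L_Xg=0$ gives the pointwise relation $Xg(Y,Z)=g([X,Y],Z)+g(Y,[X,Z])$, the metric analogue of the $G$-invariance relation for $\omega$ used just after \eqref{eq::closedsigma}. Substituting the three such identities into the Koszul formula and cancelling, a short bookkeeping of signs collapses everything to the pointwise identity
\[
2g(D_XY,Z)=g([X,Y],Z)+g([X,Z],Y)+g([Y,Z],X).
\]

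Next I would evaluate at $x$. Here I would use the splitting $\mathfrak g=\mathfrak k\oplus\mathfrak m$ together with the fact that every element of $\mathfrak k$ is a vector field vanishing at $x$: for $A,B\in\mathfrak m$ the $\mathfrak k$-component of $[A,B]_{\mathfrak g}$ contributes nothing at $x$, so by \eqref{eq::relationbetweenbrackets} one has $[A,B]_x=-([A,B]_{\mathfrak m})_x$. Converting the three bracket terms accordingly and replacing $g_x$ by $\langle\cdot,\cdot\rangle$ turns the last display into $2\langle(D_XY)_x,Z\rangle=-\langle[X,Y]_{\mathfrak m},Z\rangle-\langle[X,Z]_{\mathfrak m},Y\rangle-\langle[Y,Z]_{\mathfrak m},X\rangle$. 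The first term on the right is skew-symmetric in $X,Y$ and produces the $-\frac12[X,Y]_{\mathfrak m}$ contribution; rewriting the remaining two as $\langle[Z,X]_{\mathfrak m},Y\rangle+\langle X,[Z,Y]_{\mathfrak m}\rangle$ identifies the symmetric remainder with the operation $U(X,Y)$ of the statement. Since this holds for all $Z\in\mathfrak m$ and $\langle\cdot,\cdot\rangle$ is non-degenerate, \eqref{eq::homLeviCivita} follows, and the symmetry and bilinearity of $U$ are immediate from its defining pairing.

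The computation is essentially routine; the one genuinely substantive point, and the step I would be most careful about, is the treatment of the non-constant functions $g(Y,Z)$ through the Killing identity, since simply discarding those derivative terms (as one may for the canonical connection of a naturally reductive space) would give the wrong answer. A secondary bookkeeping hazard is keeping the sign of \eqref{eq::relationbetweenbrackets} consistent when passing from vector-field brackets to $\mathfrak m$-projected Lie-algebra brackets at $x$, and in particular matching the normalization in the defining identity of $U$.
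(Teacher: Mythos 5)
Your proof is correct and follows essentially the same route as the paper's: both reduce to the pointwise Killing-field identity $2g(D_XY,Z)=g([X,Y],Z)+g([X,Z],Y)+g([Y,Z],X)$ (the paper by polarizing $g(D_XX,Z)=g([X,Z],X)$ and adding the torsion-free relation, you by substituting $L_Xg=0$ into the full Koszul formula) and then evaluate at $x$ with the sign convention \eqref{eq::relationbetweenbrackets} and the vanishing of $\mathfrak k$-fields at $x$. The only caveat, which the paper's own proof shares, is that the symmetric remainder actually comes out as $\tfrac12\left(\langle[Z,X]_{\mathfrak m},Y\rangle+\langle X,[Z,Y]_{\mathfrak m}\rangle\right)$, so it matches the stated pairing for $U$ only modulo a factor $\tfrac12$ missing from the statement's definition (Besse's convention is $2\langle U(X,Y),Z\rangle=\langle[Z,X]_{\mathfrak m},Y\rangle+\langle X,[Z,Y]_{\mathfrak m}\rangle$) --- the normalization hazard you yourself flagged.
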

\begin{proof}
Since $X,Y,Z$ are symplectic and holomorphic, they are Killing as well.
Therefore
\begin{equation}
g([X,Z],X) = g(D_XZ,X) - g(D_ZX,X) = g(D_XX,Z),
\end{equation}
whence
\begin{equation}
g(D_XY,Z)+g(D_YX,Z) = g([X,Z],Y) + g([Y,Z],X),
\end{equation}
and
\begin{equation}
2g(D_XY,Z) = g([X,Y],Z) + g([X,Z],Y) + g([Y,Z],X).
\end{equation}
At $x$ one the has
\begin{equation}
\langle (D_XY)_x,Z \rangle = -\frac{1}{2} \langle [X,Y]_{\mathfrak m},Z \rangle -\frac{1}{2} \langle [X,Z]_{\mathfrak m},Y \rangle -\frac{1}{2} \langle[Y,Z]_{\mathfrak m},X\rangle,
\end{equation}
whence the thesis follows.
\end{proof}
Thanks to lemma above, we can express the covariant derivative of $J$ with respect to the Levi-Civita connection $D$ in terms of $H$ and the bilinear form $U$.
\begin{cor}\label{cor::DJsymm}
Let $X,Y$ be symplectic holomorphic vector fields in $\mathfrak m$. Then at the point $x$ one has
\begin{equation}
(D_XJ)_xY = \frac{1}{2}[X,HY]_{\mathfrak m} - \frac{1}{2}H[X,Y]_{\mathfrak m} + U(X,HY) - HU(X,Y).
\end{equation}
\end{cor}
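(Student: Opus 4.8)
The plan is to reduce the computation of $(D_XJ)_xY$ to a single application of Lemma \ref{lem:LeviCivitasymm}, which demands that all vector fields involved lie in $\mathfrak m$, i.e. be Killing. Starting from the tensorial identity $(D_XJ)Y = D_X(JY)-J(D_XY)$, the naive move is to expand $D_X(JY)$ directly; this fails, because $JY$ (the pointwise image of the Killing field $Y$ under $J$) is \emph{not} itself a Killing field, so the lemma cannot be applied to $D_X(JY)$. The key device to circumvent this is to rewrite the quantity so that $X$, rather than $JY$, occupies the differentiated slot.

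First I would use torsion-freeness of $D$ together with the holomorphicity of $X$. Writing $D_X(JY)=D_{JY}X+[X,JY]$ and $J(D_XY)=J(D_YX)+J[X,Y]$, and invoking $L_XJ=0$ in the form $[X,JY]=J[X,Y]$, the two bracket terms cancel and one is left with the clean identity
\begin{equation*}
(D_XJ)Y = D_{JY}X - J(D_YX).
\end{equation*}

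Next comes the step that I expect to carry the real content. Although $JY$ is not Killing, the operator $D_\bullet X$ is $C^\infty(M)$-linear in its lower argument, so its value at $x$ depends only on $(JY)_x=J_xY_x=(HY)_x$; here $HY$ is a genuine element of $\mathfrak m$ (as $H$ is the complex structure on $\mathfrak m$) viewed as a Killing field. Hence $D_{JY}X$ and $D_{HY}X$ agree at $x$, and Lemma \ref{lem:LeviCivitasymm} now applies to both $D_{HY}X$ and $D_YX$. This is the crux: replacing the non-Killing field $JY$ by the Killing field $HY$ is legitimate precisely because differentiation happens in the other slot.

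Finally I would substitute the lemma's formulae, $(D_{HY}X)_x=-\tfrac12[HY,X]_{\mathfrak m}+U(HY,X)$ and $(D_YX)_x=-\tfrac12[Y,X]_{\mathfrak m}+U(Y,X)$, and simplify using antisymmetry of the bracket and symmetry of $U$ to get $\tfrac12[X,HY]_{\mathfrak m}+U(X,HY)$ and $\tfrac12[X,Y]_{\mathfrak m}+U(X,Y)$; applying $H$ to the latter and taking the difference yields exactly the claimed expression. The main obstacle is genuinely the bookkeeping around the non-Killing field $JY$: once the identity $(D_XJ)Y=D_{JY}X-J(D_YX)$ is in hand and the $C^\infty(M)$-linearity of $D_\bullet X$ is exploited, the remainder is a routine sign check.
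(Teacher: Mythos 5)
Your proposal is correct and follows essentially the same route as the paper's own proof: both reduce $(D_XJ)Y$ to $D_{JY}X - J\,D_YX$ using torsion-freeness of $D$ and $L_XJ=0$, and then conclude by Lemma \ref{lem:LeviCivitasymm}. Your explicit observation that $(D_{JY}X)_x$ depends only on $(JY)_x=(HY)_x$ by $C^\infty(M)$-linearity in the lower slot --- so the non-Killing field $JY$ may legitimately be replaced by $HY\in\mathfrak m$ --- merely makes precise a step the paper leaves implicit.
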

\begin{proof}
For any vector field $Z \in \mathfrak m$ one has
\begin{equation}
g\left((D_XJ)Y,Z\right) 
= g\left(D_X(JY),Z\right) - g\left(JD_XY,Z\right).
\end{equation}
Since $X$ is holomorphic, and $D$ is torsion free, the above equation reduces to
\begin{equation}
g\left((D_XJ)Y,Z\right) 
= g\left(D_{JY}X,Z\right)-g\left(JD_YX,Z\right),
\end{equation}
whence the thesis follows readily by lemma \ref{lem:LeviCivitasymm}.
\end{proof}
As a consequence of corollary above, one gets an expression of the Nijenhuis torsion $N$ at $x$ in terms of $H$.
\begin{lem}\label{lem::Nijenhuishomsymm}
Let $X,Y$ be symplectic holomorphic vector fields in $\mathfrak m$. Then at the point $x$ one has
\begin{equation}
N(X,Y)_x = N_H(X,Y),
\end{equation}
where $4N_H(X,Y) = [HX,HY]_{\mathfrak m}-H[HX,Y]_{\mathfrak m}-H[X,HY]_{\mathfrak m}-[X,Y]_{\mathfrak m}$ defines the Nijenhuis tensor of $H$.
\end{lem}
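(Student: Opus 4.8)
The plan is to exploit the well-known identity expressing the Nijenhuis tensor through any torsion-free connection, and then to substitute the formula of corollary \ref{cor::DJsymm}. Since the Levi-Civita connection $D$ is torsion-free, writing $[X,Y]=D_XY-D_YX$ in the defining relation $4N(X,Y)=[JX,JY]-J[JX,Y]-J[X,JY]-[X,Y]$, applying repeatedly the product rule $D_V(JW)=(D_VJ)W+J(D_VW)$, and simplifying with $J^2=-\id$ produces
\begin{equation}
4N(X,Y)=(D_{JX}J)Y-(D_{JY}J)X-J(D_XJ)Y+J(D_YJ)X.
\end{equation}
This is tensorial in $X$ and $Y$, so it may be evaluated at the single point $x$, which reduces the whole question to an algebraic computation on $\mathfrak m$.

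First I would note that $(D_VJ)_xW$ is tensorial in both $V$ and $W$, hence depends only on $V_x$ and $W_x$. Since $(JX)_x=HX$ and $H$ preserves $\mathfrak m$, the vector field $JX$ may be replaced in the first slot by the element $HX\in\mathfrak m$, so that corollary \ref{cor::DJsymm} applies term by term. Writing $A(X,Y)$ for the right-hand side of that corollary, evaluation at $x$ gives
\begin{equation}
4N(X,Y)_x=A(HX,Y)-A(HY,X)-HA(X,Y)+HA(Y,X).
\end{equation}

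Finally I would expand this using $A(X,Y)=\frac{1}{2}[X,HY]_{\mathfrak m}-\frac{1}{2}H[X,Y]_{\mathfrak m}+U(X,HY)-HU(X,Y)$ and simplify by means of $H^2=-\id$, the antisymmetry of $[\cdot,\cdot]_{\mathfrak m}$, and the symmetry of $U$. The bracket contributions assemble exactly into $[HX,HY]_{\mathfrak m}-H[HX,Y]_{\mathfrak m}-H[X,HY]_{\mathfrak m}-[X,Y]_{\mathfrak m}=4N_H(X,Y)$. The crux, and the only delicate step, is that every $U$-term drops out: after using $H^2=-\id$ the $U$-contributions split into $U(HX,HY)-U(HY,HX)-U(X,Y)+U(Y,X)$ together with $H$ applied to $-U(HX,Y)+U(HY,X)-U(X,HY)+U(Y,HX)$, and both expressions vanish by the symmetry of $U$. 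This cancellation is to be expected on conceptual grounds, since $N$ is an invariant of $J$ alone and cannot depend on the metric data encoded in $U$; nonetheless it is the one place where the bookkeeping must be carried out honestly. What survives is precisely $4N(X,Y)_x=4N_H(X,Y)$, which is the claim.
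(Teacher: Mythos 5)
Your proof is correct, and it takes a genuinely different route from the paper's. The paper starts from the two-term identity $2N(X,Y)=J(D_YJ)X-J(D_XJ)Y$, which is \emph{not} valid for an arbitrary almost-Hermitian structure: it is the quasi-K\"ahler identity, available here only because $d\omega=0$. Accordingly, after pairing with $g(\cdot,JZ)$ and inserting corollary \ref{cor::DJsymm}, the paper is left with two cyclic sums of $\sigma$-terms that vanish precisely by the closedness identity \eqref{eq::closedsigma}, and the conclusion is extracted from the arbitrariness of $Z$. You instead use the four-term identity $4N(X,Y)=(D_{JX}J)Y-(D_{JY}J)X-J(D_XJ)Y+J(D_YJ)X$, which holds for \emph{any} torsion-free connection on any almost-complex manifold, and then work at the vector level rather than through pairings. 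I have checked your bookkeeping: with $A(X,Y)=\frac{1}{2}[X,HY]_{\mathfrak m}-\frac{1}{2}H[X,Y]_{\mathfrak m}+U(X,HY)-HU(X,Y)$, the combination $A(HX,Y)-A(HY,X)-HA(X,Y)+HA(Y,X)$ does produce exactly the four bracket terms of $4N_H(X,Y)$, and the $U$-contributions group as you say into $U(HX,HY)-U(HY,HX)-U(X,Y)+U(Y,X)$ plus $H$ applied to $-U(HX,Y)+U(HY,X)-U(X,HY)+U(Y,HX)$, both of which die by symmetry of $U$; your tensoriality argument justifying the substitution $(D_{JX}J)_xY=A(HX,Y)$ via $(JX)_x=HX\in\mathfrak m$ is also sound, since every tangent vector at $x$ is the value of a fundamental field. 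The trade-off: the paper's route is shorter and is aligned with the key identity \eqref{eq::crucialidentity} used immediately afterwards for the Chern connection, while your route is more elementary and strictly more general --- it never invokes \eqref{eq::closedsigma}, so it shows that $N(X,Y)_x=N_H(X,Y)$ for any invariant almost-Hermitian (indeed any invariant almost-complex) structure on a reductive homogeneous space, independent of the symplectic hypothesis, recovering the classical fact in that generality.
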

\begin{proof}
Since the Levi-Civita connection $D$ is torsion free, one has the identity
\begin{equation}
2N(X,Y) = J(D_YJ)X - J(D_XJ)Y.
\end{equation}
Therefore, thanks to corollary \ref{cor::DJsymm}, evaluating $2g\left(N(X,Y),JZ\right)$ at $x$ yields
\begin{multline}
2\langle N(X,Y)_x,HZ\rangle = 
2\langle N_H(X,Y),HZ\rangle \\
+ \frac{1}{2} \sigma\left([X,Y]_{\mathfrak m},Z\right) 
+ \frac{1}{2} \sigma\left( [Y,Z]_{\mathfrak m},X\right)
+ \frac{1}{2} \sigma\left( [Z,X]_{\mathfrak m},Y\right) \\
+ \frac{1}{2} \sigma\left([HX,HY]_{\mathfrak m},Z\right)
+\frac{1}{2} \sigma\left( [HY,Z]_{\mathfrak m},HX\right)
+ \frac{1}{2} \sigma\left( [Z,HX]_{\mathfrak m},HY\right).
\end{multline}
By identity \eqref{eq::closedsigma} the second and third line of equation above vanish.
Hence the thesis follows by arbitrariness of $Z$.
\end{proof}
As we did for the Levi-Civita connection above, we can express the covariant derivative at $x$ of vector fields in $\mathfrak m$ with respect to the Chern connection in term of the Lie algebra structure of $\mathfrak g$.
\begin{lem}\label{lem::Chernconnectionsymm}
Let $X,Y,Z$ be symplectic holomorphic vector fields in $\mathfrak m$. Then at the point $x$ the Chern connection $\nabla$ of $J$ satisfies
\begin{equation}
(\nabla_XY)_x = -\frac{1}{2}[X,Y]_{\mathfrak m} + V(X,Y),
\end{equation}
where $V:\mathfrak m \times \mathfrak m \to \mathfrak m$ is the bilinear binary operation on $\mathfrak m$ defined by
\begin{equation}
\langle V(X,Y),Z \rangle = \langle U(X,Y),Z \rangle - \langle X,N_H(Y,Z) \rangle.
\end{equation}
\end{lem}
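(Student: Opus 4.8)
The plan is to evaluate the defining relation \eqref{eq::nablaintermsofD} at the point $x$ and to feed into it the two formulas already obtained for the Levi-Civita connection and for $D_XJ$. Concretely, I would write $(\nabla_XY)_x = (D_XY)_x - \tfrac12 H\bigl((D_XJ)_xY\bigr)$, where $H=J_x$ acts on $\mathfrak m$, and substitute $(D_XY)_x = -\tfrac12[X,Y]_{\mathfrak m}+U(X,Y)$ from Lemma \ref{lem:LeviCivitasymm} together with the expression for $(D_XJ)_xY$ from Corollary \ref{cor::DJsymm}. The term $-\tfrac12 H\bigl((D_XJ)_xY\bigr)$ then splits into four pieces; applying $H^2=-\id$ to the two pieces that already carry an explicit factor $H$ converts $H^2[X,Y]_{\mathfrak m}$ and $H^2U(X,Y)$ into $-[X,Y]_{\mathfrak m}$ and $-U(X,Y)$. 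Collecting the multiples of $[X,Y]_{\mathfrak m}$ and of $U(X,Y)$ and isolating the leading $-\tfrac12[X,Y]_{\mathfrak m}$ demanded by the statement, I would define $V(X,Y):=(\nabla_XY)_x+\tfrac12[X,Y]_{\mathfrak m}$ and be left with a bilinear expression built from $[X,Y]_{\mathfrak m}$, $U(X,Y)$, $H[X,HY]_{\mathfrak m}$ and $HU(X,HY)$.

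To match this against the asserted formula for $V$ it is cleanest to pair with an arbitrary $Z\in\mathfrak m$ and to use that $H$ is skew for $\langle\cdot,\cdot\rangle$, i.e. $\langle HA,Z\rangle=-\langle A,HZ\rangle$, so that the two $H$-terms turn into $\langle[X,HY]_{\mathfrak m},HZ\rangle$ and $\langle U(X,HY),HZ\rangle$. Expanding every $U$ through its definition $\langle U(A,B),C\rangle=\langle[C,A]_{\mathfrak m},B\rangle+\langle A,[C,B]_{\mathfrak m}\rangle$ then writes $\langle V(X,Y),Z\rangle$ entirely in terms of brackets of $X,Y,Z$ and their images under $H$. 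The goal is to show this reduces to $\langle U(X,Y),Z\rangle-\langle X,N_H(Y,Z)\rangle$, with $N_H$ as in Lemma \ref{lem::Nijenhuishomsymm}; equivalently, one expands $4\langle X,N_H(Y,Z)\rangle$ using the defining relation for $N_H$ and the skewness of $H$, and checks term by term.

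I expect the main obstacle to be exactly this last bookkeeping step. Several bracket terms will appear in orderings and with $H$-insertions different from those occurring in $4N_H(Y,Z)=[HY,HZ]_{\mathfrak m}-H[HY,Z]_{\mathfrak m}-H[Y,HZ]_{\mathfrak m}-[Y,Z]_{\mathfrak m}$, and reconciling them requires the closedness identity \eqref{eq::closedsigma}. Rewritten through $\sigma(A,B)=\langle HA,B\rangle$, equation \eqref{eq::closedsigma} becomes a cyclic relation among the quantities $\langle H[\,\cdot,\cdot\,]_{\mathfrak m},\cdot\rangle$; applied to the triples that mix $X$, $Y$ (or $HY$) and $Z$ (or $HZ$), it should cancel the spurious terms and reassemble the four brackets defining $4N_H(Y,Z)$ with the correct signs, yielding the claimed identity. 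The only genuine care needed throughout is tracking the factors of $H$ and the signs produced by skew-symmetry, after which the result follows by the arbitrariness of $Z$.
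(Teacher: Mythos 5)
Your proposal is correct and follows essentially the same route as the paper's own proof: evaluate \eqref{eq::nablaintermsofD} at $x$, substitute Lemma \ref{lem:LeviCivitasymm} and Corollary \ref{cor::DJsymm}, and reduce everything to the identity $\sigma\left((D_XJ)_xY,Z\right)=\langle X,2N_H(Y,Z)\rangle$, which the paper likewise verifies by expanding $U$ and invoking closedness \eqref{eq::closedsigma} together with the definition of $N_H$. The only difference is cosmetic: you carry out the final bookkeeping in terms of $\langle\cdot,\cdot\rangle$ and the skew-adjointness of $H$, while the paper phrases the same cancellation in terms of $\sigma$.
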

\begin{proof}
By lemma \ref{lem:LeviCivitasymm}, evaluating at $x$ the identity $\nabla_XY = D_XY - \frac{1}{2}J(D_XJ)Y$ yields
\begin{equation}
(\nabla_XY)_x = -\frac{1}{2}[X,Y]_{\mathfrak m} + U(X,Y) - \frac{1}{2} H(D_XJ)_xY.
\end{equation}
Therefore we are reduced to prove the identity
\begin{equation}\label{eq::crucialidentity}
\sigma((D_XJ)_xY, Z ) = \langle X,2N_H(Y,Z) \rangle.
\end{equation}
In order to do this, by corollary \eqref{cor::DJsymm} and the definition of $U$ we compute
\begin{multline}
\sigma((D_XJ)_xY, Z ) = 
\frac{1}{2}\sigma([X,HY]_{\mathfrak m},Z) 
- \frac{1}{2}\sigma(H[X,Y]_{\mathfrak m},Z) 
+ \frac{1}{2} \sigma([HZ,X]_{\mathfrak m},Y) \\
- \frac{1}{2} \sigma(X,H[HZ,HY]_{\mathfrak m}) 
+ \frac{1}{2} \sigma([Z,X]_{\mathfrak m},HY) 
+ \frac{1}{2} \sigma(X,H[Z,Y]_{\mathfrak m}),
\end{multline}
whence \eqref{eq::crucialidentity} follows by identity \eqref{eq::closedsigma} and definition of $N_H$.
\end{proof}
Now we are in position to prove the main result of this section.
\begin{thm}\label{thm::chernriccionm}
Let $X,Y$ be symplectic and holomorphic vector fields in $\mathfrak m$.
At the point $x$ the Chern-Ricci form of $J$ is given by
\begin{equation}
\rho(X,Y)_x = \sum_{i=1}^{2n} \langle [H[X,Y]_{\mathfrak m},e_i]_{\mathfrak m},e_i \rangle
- \langle H[[X,Y]_{\mathfrak g},e_i]_{\mathfrak m},e_i\rangle,
\end{equation}
where $e_1,\dots,e_{2n}$ is any orthonormal basis of $\mathfrak m$. 
\end{thm}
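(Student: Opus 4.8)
The plan is to compute the curvature endomorphism $R(X,Y)_x\in\End(T_xM)\cong\End(\mathfrak m)$ of the Chern connection and then read off $\rho(X,Y)_x$ as a trace twisted by $H$. Since $\nabla J=0$, the curvature $R(X,Y)$ is complex linear, and under the identification $\mathfrak m\cong T_xM$ one has $J_x=H$, so that
\begin{equation*}
\rho(X,Y)_x=\tr\left(H\,R(X,Y)_x\right)=\sum_{i=1}^{2n}\langle H\,R(X,Y)_xe_i,e_i\rangle .
\end{equation*}
Everything thus reduces to an algebraic expression for $R(X,Y)_x$ on $\mathfrak m$.

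For this I would use that Lemma \ref{lem::LiederCherncon} says precisely that every $X\in\mathfrak g$ acts as an infinitesimal automorphism of $\nabla$; equivalently $\nabla$ is a $G$-invariant connection on the reductive space $G/K$, whose Nomizu operator $\Lambda_X:=(\nabla_X\,\cdot\,)_x$ is given by Lemma \ref{lem::Chernconnectionsymm}, namely $\Lambda_XY=-\tfrac12[X,Y]_{\mathfrak m}+V(X,Y)$. Combining the infinitesimal-automorphism identity of Lemma \ref{lem::LiederCherncon} with the torsion identity $\nabla_AB-\nabla_BA-[A,B]=N(A,B)$ and evaluating $R(X,Y)Z=\nabla_X\nabla_YZ-\nabla_Y\nabla_XZ-\nabla_{[X,Y]}Z$ at $x$, one obtains the standard curvature formula for invariant connections,
\begin{equation*}
R(X,Y)Z=\Lambda_X\Lambda_YZ-\Lambda_Y\Lambda_XZ-\Lambda_{[X,Y]_{\mathfrak m}}Z-\ad_{[X,Y]_{\mathfrak k}}Z ,
\end{equation*}
the signs of the last two terms being dictated by the anti-homomorphism \eqref{eq::relationbetweenbrackets}. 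I emphasize that the lemma is essential here: naively evaluating $\nabla_{[X,Y]}Z$ would miss the $\ad_{[X,Y]_{\mathfrak k}}$ term, which is produced correctly only through the invariance identity.

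The next step is to take the trace against $H$. Because $\nabla$ preserves $J$, the Nomizu operator commutes with $H$, i.e. $\Lambda_XH=H\Lambda_X$; hence by cyclicity $\tr(H\Lambda_X\Lambda_Y)=\tr(H\Lambda_Y\Lambda_X)$ and the quadratic terms cancel. This leaves
\begin{equation*}
\rho(X,Y)_x=-\tr\left(H\Lambda_{[X,Y]_{\mathfrak m}}\right)-\sum_{i=1}^{2n}\langle H[[X,Y]_{\mathfrak k},e_i]_{\mathfrak m},e_i\rangle ,
\end{equation*}
and the second summand is already the $\mathfrak k$-part of the claimed term $-\sum_i\langle H[[X,Y]_{\mathfrak g},e_i]_{\mathfrak m},e_i\rangle$. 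It then remains to expand $\Lambda_{[X,Y]_{\mathfrak m}}=-\tfrac12[[X,Y]_{\mathfrak m},\cdot\,]_{\mathfrak m}+V([X,Y]_{\mathfrak m},\cdot)$ and to prove the identity $-\tr(H\Lambda_{[X,Y]_{\mathfrak m}})=\sum_i\langle[H[X,Y]_{\mathfrak m},e_i]_{\mathfrak m},e_i\rangle-\sum_i\langle H[[X,Y]_{\mathfrak m},e_i]_{\mathfrak m},e_i\rangle$, which together with the $\mathfrak k$-term assembles the right-hand side.

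I expect this last reduction to be the main obstacle, since it is where the hypotheses on $\sigma$ and $H$ are consumed. Writing $B=[X,Y]_{\mathfrak m}$, the Nijenhuis part of $V(B,\cdot)$ contributes $\sum_i\langle B,N_H(e_i,He_i)\rangle$ to $\tr(HV(B,\cdot))$, and a short computation with $H^2=-\id$ shows $N_H(Z,HZ)=0$ for every $Z$, so this term drops out entirely. The surviving $U$-contribution must then be rearranged, using the symmetry of $U$, the skew-adjointness of $H$, and crucially the closedness identity \eqref{eq::closedsigma}, into the two $\ad_{\mathfrak m}$-traces above. The bookkeeping of signs forced by \eqref{eq::relationbetweenbrackets} together with the relation $\sigma(\,\cdot\,,H\,\cdot\,)=\langle\,\cdot\,,\cdot\,\rangle$ is where errors are most likely to creep in, so that is the step I would verify most carefully, for instance by testing the final formula on a low-dimensional example before trusting the general manipulation.
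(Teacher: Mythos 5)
Your plan — identify $\nabla$ as an invariant connection, compute its curvature at $x$ by a Nomizu-type formula, and kill the quadratic terms by tracing against $H$ — is exactly the skeleton of the paper's proof (the paper does the same computation by hand, via Lemmas \ref{lem::LiederCherncon} and \ref{lem::Chernconnectionsymm}, with the quadratic cancellation carried out through the decomposition $W(X,\cdot)=f_X-f_X^*$ and the commutation of $f_X$ with $H$). But there is a genuine error at the center of your argument: you take as Nomizu operator $\Lambda_X:=(\nabla_X\,\cdot\,)_x=-\tfrac12[X,\cdot]_{\mathfrak m}+V(X,\cdot)$, i.e.\ the covariant derivative of fundamental vector fields at $x$, and this is \emph{not} the operator that enters the curvature formula for invariant connections, nor does it commute with $H$. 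The correct operator is the twist
\begin{equation*}
\tilde\Lambda_X \;=\; \Lambda_X + [X,\cdot]_{\mathfrak m} \;=\; \tfrac12[X,\cdot]_{\mathfrak m}+V(X,\cdot),
\end{equation*}
which is $(\nabla_X-L_X)$ evaluated at $x$; this is visible in the paper's own proof, where the curvature is assembled precisely from the combinations $\nabla_Xe_i-[X,e_i]$ and $\nabla_{[X,Y]}e_i-[[X,Y],e_i]$, never from $\nabla_Xe_i$ alone. Your claim ``$\nabla J=0$ implies $\Lambda_XH=H\Lambda_X$'' fails because $J$ applied to the fundamental field of $Z$ agrees with the fundamental field of $HZ$ only at the point $x$, while $\Lambda_X$ is first order; concretely $\Lambda_XH-H\Lambda_X=H[X,\cdot]_{\mathfrak m}-[X,H\cdot]_{\mathfrak m}$, which is generically nonzero. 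On the Kodaira--Thurston algebra ($[e_1,e_2]=e_4$, $He_1=e_3$, $He_2=e_4$) one has $[e_1,He_2]_{\mathfrak m}-H[e_1,e_2]_{\mathfrak m}=-He_4=e_2\neq 0$, and indeed $\Lambda_{e_1}(He_2)=-\tfrac12e_2$ while $H\Lambda_{e_1}e_2=+\tfrac12e_2$.

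This single misidentification breaks all three of your displayed steps simultaneously: the formula $R(X,Y)=[\Lambda_X,\Lambda_Y]-\Lambda_{[X,Y]_{\mathfrak m}}-\ad_{[X,Y]_{\mathfrak k}}$ is valid with $\tilde\Lambda$ in place of $\Lambda$ but false as written (it drops the cross terms $[\Lambda_X,\ad_Y^{\mathfrak m}]+[\ad_X^{\mathfrak m},\Lambda_Y]+[\ad_X^{\mathfrak m},\ad_Y^{\mathfrak m}]-\ad^{\mathfrak m}_{[X,Y]_{\mathfrak m}}$); the quadratic cancellation $\tr\bigl(H[\Lambda_X,\Lambda_Y]\bigr)=0$ fails since $\Lambda_X$ does not commute with $H$; and your final reduction identity is off by the term $\tr\bigl(H\ad^{\mathfrak m}_{[X,Y]_{\mathfrak m}}\bigr)$, since $-\tr(H\Lambda_B)=\tr(\ad^{\mathfrak m}_{HB})$ whereas the theorem needs $-\tr(H\tilde\Lambda_B)=\tr(\ad^{\mathfrak m}_{HB})-\tr(H\ad^{\mathfrak m}_B)$. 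The repair is purely local: replace $\Lambda$ by $\tilde\Lambda$ throughout. Then $\tilde\Lambda_X$ does commute with $H$ (it is $\tfrac12 W(X,\cdot)=\tfrac12(f_X-f_X^*)$ in the paper's notation), the curvature formula holds, the quadratic term dies as a trace of a commutator with an $H$-commuting factor, and your last step goes through as you sketched — the Nijenhuis contribution to $\tr\bigl(H\tilde\Lambda_{[X,Y]_{\mathfrak m}}\bigr)$ vanishes because $Z\mapsto HN_H(B,Z)$ anticommutes with $H$ (equivalently, via $N_H(Z,HZ)=0$ and the cyclic identity \eqref{eq::cyclicNijenhuis}), leaving exactly the two $\ad$-traces of the statement. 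With that correction your proof is sound, and is essentially the paper's argument in packaged form.
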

\begin{proof}
Denoted as above by $R$ the curvature of the Chern connection $\nabla$, the Chern-Ricci form of $J$ is given by
\begin{equation}
\rho(X,Y) = \tr\left(JR(X,Y)\right).
\end{equation}
Since the formula in the statement does not depend on the particular orthonormal basis $e_1,\dots,e_{2n} \in \mathfrak m$ chosen, we can furthermore assume with no loss that $e_{i+n}=He_i$ for all $i \in \{1,\dots,n\}$.
Therefore the smooth function
\begin{equation}
f = \sum_{i=1}^n 2\omega(R(X,Y)e_i,e_i).
\end{equation}
satisfies $\rho(X,Y)_x = f(x)$.
Since $X,Y$ are symplectic and holomorphic, one has
\begin{eqnarray*}
f
&=& 2 \sum_{i=1}^n 
\omega([X,\nabla_Ye_i],e_i)
+ \omega(\nabla_Ye_i,[X,e_i])
- \omega([Y,\nabla_Xe_i],e_i) \\
&& - \omega(\nabla_Xe_i,[Y,e_i])
+ 2\omega(\nabla_Xe_i,\nabla_Ye_i)
- \omega(\nabla_{[X,Y]}e_i,e_i).
\end{eqnarray*}
By lemma \ref{lem::LiederCherncon} then the equation above reduces to
\begin{multline}
f = \sum_{i=1}^n 
4\omega(\nabla_Xe_i-[X,e_i],\nabla_Ye_i-[Y,e_i]) \\
+ 2\omega(\nabla_{[X,Y]}e_i-[[X,Y],e_i],e_i).
\end{multline}
Therefore by lemma \ref{lem::Chernconnectionsymm}, evaluating at $x$ yields
\begin{multline}
\rho(X,Y)_x = \sum_{i=1}^n 
\sigma([X,e_i]_{\mathfrak m} + 2V(X,e_i),[Y,e_i]_{\mathfrak m} + 2V(Y,e_i)) \\
- \sigma([[X,Y]_{\mathfrak m},e_i]_{\mathfrak m} + 2V([X,Y]_{\mathfrak m},e_i),e_i)
- 2\sigma\left([[X,Y]_{\mathfrak k},e_i]_{\mathfrak m},e_i\right).
\end{multline}
In order to simplify further the identity above, note that 
\begin{multline}\label{eq::rhoatxnonyetsimplified}
\rho(X,Y)_x = \sum_{i=1}^n 
\sigma(W(X,e_i),W(Y,e_i))
- \sigma(W([X,Y]_{\mathfrak m},e_i),e_i) \\
- 2\sigma\left([[X,Y]_{\mathfrak k},e_i]_{\mathfrak m},e_i\right).
\end{multline}
where $W: \mathfrak m \times \mathfrak m \to \mathfrak m$ is a bilinear map defined by
\begin{equation}
W(X,Y) = [X,Y]_{\mathfrak m} + 2V(X,Y).
\end{equation}
By definition of $V$ it follows that
\begin{equation}
W(X,Y) = [X,Y]_{\mathfrak m} + 2 U(X,Y) - \sum_{j=1}^{2n} 2\langle X,N_H(Y,e_j) \rangle e_j.
\end{equation}
On the other hand, substituting the identity \eqref{eq::closedsigma} into the definition of $U$ yields
\begin{equation}
2U(X,Y)
= H[HX,Y]_{\mathfrak m}
- \sum_{j=1}^{2n}
\langle[X,e_j]_{\mathfrak m}+H[HX,e_j]_{\mathfrak m},Y\rangle e_j,
\end{equation}
whence
\begin{multline}
W(X,Y) = [X,Y]_{\mathfrak m} + H[HX,Y]_{\mathfrak m}
- \sum_{j=1}^{2n}
2\langle X,N_H(Y,e_j) \rangle e_j \\
+ \langle[X,e_j]_{\mathfrak m}+H[HX,e_j]_{\mathfrak m},Y\rangle e_j.
\end{multline}
Thanks to the following identity, which is a direct consequence of closedness of $\sigma$ and definition of $N_H$,
\begin{equation}\label{eq::cyclicNijenhuis}
\langle X, N_H(Y,e_j) \rangle + \langle Y, N_H(e_j,X) \rangle + \langle e_j, N_H(X,Y) \rangle = 0
\end{equation}
one finally has
\begin{multline}
W(X,Y) = 2N_H(X,Y) + [X,Y]_{\mathfrak m} + H[HX,Y]_{\mathfrak m} \\
- \sum_{j=1}^{2n} \langle 2N_H(X,e_j) + [X,e_j]_{\mathfrak m}+H[HX,e_j]_{\mathfrak m},Y\rangle e_j.
\end{multline}
Therefore, letting $f_X(Y) = 2N_H(X,Y) + [X,Y]_{\mathfrak m} + H[HX,Y]_{\mathfrak m}$ defines an endomorphism $f_X$ of $\mathfrak m$ such that
\begin{equation}
W(X,Y) = f_X(Y) - f_X^*(Y), 
\end{equation}
where $f_X^*$ denotes the adjoint endomorphism.
By definition of $N_H$ it is easy to check that $f_X$ and $f_X^*$ commute with $H$, whence it follows the identity
\begin{equation}
\sigma(f_X(Y),Z) = \sigma(Y,f_X^*(Z)).
\end{equation}
As a consequence, \eqref{eq::rhoatxnonyetsimplified} reduces to
\begin{multline}
\rho(X,Y)_x = \sum_{i=1}^n \sigma \left( [f_X,f_Y-f_Y^*](e_i),e_i \right)
- 2 \sigma(f_{[X,Y]_{\mathfrak m}}(e_i),e_i) \\
- 2\sigma\left([[X,Y]_{\mathfrak k},e_i]_{\mathfrak m},e_i\right).
\end{multline}
By commuting relations noticed above, one has
\begin{equation}
\sum_{i=1}^n \sigma \left( [f_X,f_Y-f_Y^*](e_i),e_i \right)
= \frac{1}{2} \tr\left( [Hf_X,f_Y-f_Y^*] \right) = 0,
\end{equation}
whence
\begin{equation}
\rho(X,Y)_x = - \sum_{i=1}^{2n}
\sigma(f_{[X,Y]_{\mathfrak m}}(e_i),e_i)
+ \sigma\left([[X,Y]_{\mathfrak k},e_i]_{\mathfrak m},e_i\right).
\end{equation}
The thesis then follows by definition of $f_{[X,Y]_{\mathfrak m}}$ and by identity \eqref{eq::cyclicNijenhuis}.
\end{proof}
The above formula for the Chern-Ricci form simplifies notationally a bit after extending $H$ to the whole $\mathfrak g$ by defining $H\mathfrak k = \{0\}$, and considering the adjoint action of $\mathfrak g$ on itself denoted by $\ad_X(Y) = [X,Y]_{\mathfrak g}$. As a consequence one readily has the following
\begin{cor}\label{cor::rhohomwithW}
Let $X,Y$ be symplectic and holomorphic vector fields in $\mathfrak m$.
The Chern-Ricci form of $J$ at $x$ satisfies
\begin{equation}
\rho(X,Y)_x = \tr \left( \ad_{H[X,Y]_{\mathfrak g}} - H \ad_{[X,Y]_{\mathfrak g}} \right).
\end{equation}
\end{cor}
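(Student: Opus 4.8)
The plan is to deduce the corollary directly from Theorem \ref{thm::chernriccionm} by reinterpreting the two sums appearing there as traces over the whole Lie algebra $\mathfrak g$, taking advantage of the extension of $H$ by $H\mathfrak k = \{0\}$. The entire content is a bookkeeping computation of a trace with respect to the splitting $\mathfrak g = \mathfrak k \oplus \mathfrak m$, and the only structural inputs are that $\mathfrak m$ is $\ad_{\mathfrak k}$-invariant (so $[\mathfrak k,\mathfrak m]_{\mathfrak g} \subseteq \mathfrak m$) and that the extended $H$ has image contained in $\mathfrak m$.

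First I would record the trivial but essential remark that, since $H$ annihilates $\mathfrak k$, one has $H[X,Y]_{\mathfrak g} = H[X,Y]_{\mathfrak m}$, so that $\ad_{H[X,Y]_{\mathfrak g}} = \ad_{H[X,Y]_{\mathfrak m}}$. Next I would compute each of the two traces by writing the endomorphism in block form with respect to $\mathfrak g = \mathfrak k \oplus \mathfrak m$, using an orthonormal basis $e_1,\dots,e_{2n}$ of $\mathfrak m$ together with any basis of $\mathfrak k$, and recalling that the trace of a block endomorphism is the sum of the traces of its diagonal blocks. For the first term, set $Z = H[X,Y]_{\mathfrak m} \in \mathfrak m$; because $\mathfrak m$ is $K$-invariant one has $[Z,\mathfrak k]_{\mathfrak g} = -[\mathfrak k,Z]_{\mathfrak g} \subseteq \mathfrak m$, so the $\mathfrak k$-to-$\mathfrak k$ block of $\ad_Z$ vanishes and $\tr(\ad_Z)$ reduces to the $\mathfrak m$-diagonal sum $\sum_{i} \langle [Z,e_i]_{\mathfrak m},e_i \rangle$. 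For the second term, put $W = [X,Y]_{\mathfrak g}$; since $H(\mathfrak g) \subseteq \mathfrak m$, the endomorphism $H\ad_W$ maps everything into $\mathfrak m$, so again its $\mathfrak k$-to-$\mathfrak k$ block is zero and $\tr(H\ad_W)$ equals $\sum_{i} \langle H[W,e_i]_{\mathfrak g},e_i \rangle = \sum_{i} \langle H[W,e_i]_{\mathfrak m},e_i \rangle$.

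Subtracting the two trace expressions reproduces exactly the right-hand side of the formula in Theorem \ref{thm::chernriccionm}, which yields the claim. I do not anticipate any genuine obstacle: the statement is a notational repackaging of the theorem, and the only point demanding a moment's care is the verification that both $\mathfrak k$-diagonal contributions vanish — the first from the $\ad_{\mathfrak k}$-invariance of $\mathfrak m$, the second from $H(\mathfrak g) \subseteq \mathfrak m$.
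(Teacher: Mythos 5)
Your proposal is correct and takes essentially the same route as the paper: the paper presents the corollary as an immediate notational reformulation of Theorem \ref{thm::chernriccionm} after extending $H$ by $H\mathfrak k = \{0\}$, and your block-trace computation with respect to $\mathfrak g = \mathfrak k \oplus \mathfrak m$ (using $[\mathfrak k,\mathfrak m]_{\mathfrak g} \subseteq \mathfrak m$ for the first term and $H(\mathfrak g) \subseteq \mathfrak m$ for the second) is precisely the bookkeeping the paper leaves implicit in the word ``readily''.
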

This result suggests to define a linear one-form $\zeta \in \mathfrak g^*$ by letting
\begin{equation}\label{eq::defzeta}
\zeta(X) = \tr \left( \ad_{HX} - H \ad_{X} \right).
\end{equation}
By straightforward calculation involving $K$-invariance of $H$ and basic facts about adjoint representations, one can check that $\zeta$ is $K$-invariant.
As a consequence, the restriction of $\zeta$ to $\mathfrak k$ is completely determined by the restriction of $\zeta$ to the center of $\mathfrak k$.
This follows from the fact that, by compactness, $\mathfrak k$ decomposes as the sum of its center $\mathfrak z$ and a semi-simple subalgebra $\mathfrak s$. 
Since any element of $\mathfrak s$ can be written in the form $[X,Y]$ with $X,Y \in \mathfrak s$, $K$-invariance of $\zeta$ implies that $\mathfrak s$ is contained in the kernel of $\zeta$.
Having introduced $\zeta$, an easy but important consequence of corollary above is that the Chern-Ricci form of $J$ is determined by $\zeta$ via the identity
\begin{equation}\label{eq::rhoatx}
\rho(X,Y)_x = \zeta([X,Y]_{\mathfrak m}).
\end{equation}
This formula suggests that a symplectic manifold admitting a homogeneous special compatible almost complex structure with non-zero Hermitian scalar curvature is, up to coverings, a co-adjoint orbit equipped with the Kirillov-Kostant-Souriau symplectic form.
We will enter into details of this in sub-section \ref{subsect::coadjointorbits}.

\subsection{Hermitian scalar curvature}\label{subsec::hermscalcurv}

Now we pass to consider the Hermitian scalar curvature $s$ of $J$.
Since $\omega$ and $J$ are homogeneous, then $s$ turns out to be a $G$-invariant function on $M$, hence constant. Moreover one has the following
\begin{cor}\label{cor::Hermitianscalarcurvhom}
The Hermitian scalar curvature of $J$ is given by
\begin{equation}\label{eq::hermscalcurvhomo}
s = \tr \left( \ad_{H\xi} - H \ad_\xi \right)
\end{equation}
where $\xi = \sum_{i=1}^n [e_i,e_{i+n}]_{\mathfrak g}$, and $\{e_i\}$ is any symplectic basis of $\mathfrak m$.
\end{cor}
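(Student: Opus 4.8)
The plan is to express the Hermitian scalar curvature as the $\omega$-trace of the Chern-Ricci form at the point $x$, and then to feed each diagonal entry of that trace into corollary \ref{cor::rhohomwithW}. Since $s$ is $G$-invariant it is determined by its value at $x$, and the whole computation is purely algebraic once $\rho_x$ is known through corollary \ref{cor::rhohomwithW}.

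First I would unwind the top-degree identity defining $s$ at the point $x$. Fixing a symplectic basis $e_1,\dots,e_n,e_{n+1},\dots,e_{2n}$ of $\mathfrak m$, so that $\sigma(e_i,e_{j+n})=\delta_{ij}$ and $\sigma(e_i,e_j)=\sigma(e_{i+n},e_{j+n})=0$, the form $\sigma=\omega_x$ reads $\sigma=\sum_{i=1}^n\theta^i\wedge\theta^{i+n}$ in the dual basis and $\sigma^n=n!\,\theta^1\wedge\cdots\wedge\theta^{2n}$. A routine manipulation of the defining relation of $s$ then identifies $s$ with the trace of $\rho$ with respect to $\omega$, namely
\[
s=\sum_{i=1}^n\rho(e_i,e_{i+n})_x .
\]
The normalization can be pinned down by testing it against the special case $\rho=\lambda\omega$, which must return $s=n\lambda$, consistently with $\sum_{i=1}^n\lambda\,\sigma(e_i,e_{i+n})=n\lambda$.

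The remaining steps are algebraic. Each $e_i$ and $e_{i+n}$ lies in $\mathfrak m$, hence corresponds to a symplectic and holomorphic vector field, so corollary \ref{cor::rhohomwithW} applies verbatim and gives
\[
\rho(e_i,e_{i+n})_x=\tr\!\left(\ad_{H[e_i,e_{i+n}]_{\mathfrak g}}-H\ad_{[e_i,e_{i+n}]_{\mathfrak g}}\right)=\zeta\!\left([e_i,e_{i+n}]_{\mathfrak g}\right),
\]
where $\zeta$ is the one-form of \eqref{eq::defzeta}. It is crucial to use the form of corollary \ref{cor::rhohomwithW} written with the full bracket $[\,\cdot\,,\cdot\,]_{\mathfrak g}$ rather than the simplified identity \eqref{eq::rhoatx} involving $[\,\cdot\,,\cdot\,]_{\mathfrak m}$, because $\xi$ is built from $\mathfrak g$-brackets; using the $\mathfrak m$-version here would force an extra argument showing that $\zeta$ annihilates $\sum_i[e_i,e_{i+n}]_{\mathfrak k}$, which this route sidesteps entirely. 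Summing over $i$ and using linearity of $\zeta$ together with the definition $\xi=\sum_{i=1}^n[e_i,e_{i+n}]_{\mathfrak g}$ yields
\[
s=\sum_{i=1}^n\zeta\!\left([e_i,e_{i+n}]_{\mathfrak g}\right)=\zeta(\xi)=\tr\!\left(\ad_{H\xi}-H\ad_\xi\right),
\]
which is the asserted identity. Independence of the chosen symplectic basis is then automatic since $s$ is intrinsic; in fact $\xi$ is itself basis-independent, being the contraction of the bracket $\Lambda^2\mathfrak m\to\mathfrak g$ against the Poisson bivector of $\sigma$.

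The main obstacle I anticipate is confined to the first step: correctly pinning down the constant in $s=\sum_{i=1}^n\rho(e_i,e_{i+n})_x$ from the top-degree definition of $s$, keeping careful track of the factor $n$ and of the signs coming from $\sigma^n=n!\,\theta^1\wedge\cdots\wedge\theta^{2n}$. Once this Lefschetz-type contraction is in place, the second and third steps are immediate consequences of corollary \ref{cor::rhohomwithW} and the linearity of $\zeta$.
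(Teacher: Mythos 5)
Your proof is correct and, for the main identity, follows the paper's own route: the paper likewise notes that $s$ is constant by homogeneity, writes $s=\sum_{i=1}^n\rho(e_i,e_{i+n})_x$ for a symplectic basis, and concludes by corollary \ref{cor::rhohomwithW} and linearity of $\zeta$. The one place where you genuinely diverge is the well-definedness of $\xi$: the paper devotes the second half of its proof to an explicit computation, writing a second symplectic basis as $f_i=Ae_i$ with $A$ in $2\times 2$ block form and using the symplectic relations $ab^t=ba^t$, $cd^t=dc^t$, $ad^t-bc^t=\id$ to verify $\sum_i[f_i,f_{i+n}]=\sum_i[e_i,e_{i+n}]$, whereas you observe that $\xi$ is the image, under the antisymmetric bracket map factoring through $\Lambda^2\mathfrak m\to\mathfrak g$, of the Poisson bivector $\sum_i e_i\wedge e_{i+n}$ dual to $\sigma$, which takes this form in every symplectic basis. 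These are the same fact -- invariance of the bivector under the symplectic group is exactly what the paper's matrix identities check -- but your packaging is cleaner and explains why the computation works. One small caution: your first justification, that independence is ``automatic since $s$ is intrinsic,'' only shows the scalar $\zeta(\xi)$ is basis-independent, not $\xi$ itself, which is what the statement (and proposition \ref{prop::propertiesxi}) asserts; the bivector remark is what actually carries the claim and should be the argument of record rather than an afterthought. Finally, your care in invoking corollary \ref{cor::rhohomwithW} with the full $\mathfrak g$-bracket rather than the $\mathfrak m$-projected identity \eqref{eq::rhoatx} is well placed, since summing the projected version would leave the components $\sum_i[e_i,e_{i+n}]_{\mathfrak k}$ unaccounted for; the paper's proof likewise rests on the $\mathfrak g$-bracket form.
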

\begin{proof}
As we already noticed above, $s$ is constant.
Therefore it is enough to evaluate it at $x$.
Given a symplectic basis $e_1,\dots,e_{2n}$ of $\mathfrak m$, by definition of the Hermitian scalar curvature one then has $s = \sum_{i=1}^n \rho(e_i,e_{i+n})_x$.
Hence formula \eqref{eq::hermscalcurvhomo} readily follows by corollary \ref{cor::rhohomwithW}.

Now it remains to show that $\xi$ does not depend on the chosen symplectic basis.
To this end let $\{f_i\}$ be another symplectic basis of $\mathfrak m$, and let $A$ be the symplectic endomorphism of $\mathfrak m$ such that $f_i=Ae_i$, for all $i\in\{1,\dots,2n\}$.
In the basis $\{e_i\}$ then $A$ is represented by a $2n \times 2n$ real matrix
\begin{equation}
\left(
\begin{array}{cc}
a & b \\
c & d
\end{array}
\right)
\end{equation}
where $a,b,c,d$ are $n \times n$ matrices satisfying $ab^t=ba^t$, $cd^t=dc^t$, $ad^t-bc^t=\id$.
Therefore $f_i = \sum_{j=1}^n a^j_i e_j + c^j_i e_{j+n}$, and $f_{i+n} = \sum_{h=1}^n b^h_i e_h + d^h_i e_{h+n}$,
whence
\begin{multline}
\sum_{i=1}^n [f_i,f_{i+n}]
= \sum_{i,j,h=1}^n 
a^j_i b^h_i  [e_j,e_h]
+ (a^j_i d^h_i - c^h_i b^j_i) [e_j,e_{h+n}] \\
+ c^j_i d^h_i [e_{j+n},e_{h+n}]
= \sum_{i=1}^n [e_i,e_{i+n}].
\end{multline}
\end{proof}

The element $\xi \in \mathfrak g$ defined in the statement above depends just on the symplectic structure of $\mathfrak m$.
Moreover, $\xi$ is $K$-invariant. 
This follows from the fact that given $k \in K$ and a symplectic basis $\{e_i\}$ of $\mathfrak m$, $\{\Ad_k(e_i)\}$ is also a symplectic basis of $\mathfrak m$.
Now consider
\begin{equation}\label{eq::xidecomposition}
\xi = \xi_{\mathfrak k} + \xi_{\mathfrak m}
\end{equation}
according to decomposition $\mathfrak g = \mathfrak k \oplus \mathfrak m$.
Both $\xi_{\mathfrak k}$ and $\xi_{\mathfrak m}$ are $K$-invariant.
In particular, $\xi_{\mathfrak k}$ belongs to the center of $\mathfrak k$. 
On the other hand, note that for all $X \in \mathfrak m$ one has $\ad_X(\mathfrak k) \subset \mathfrak m$, whence
\begin{equation}
\tr(\ad_X) = \sum_{i=1}^n \sigma([X,e_i],He_i) - \sigma([X,He_i],e_i),
\end{equation}
being $e_1,\dots,e_n$ a unitary basis of $\mathfrak m$, that is one for which $\{e_i,He_i\}$ is a symplectic basis.
By closedness of $\sigma$ equation above simplifies to
\begin{equation}\label{eq::tradXviaxim}
\tr(\ad_X) = \sigma(X,\xi_{\mathfrak m}).
\end{equation}
On the other hand, if $X \in \mathfrak k$, then $\ad_X$ is skew-invariant with respect to any $K$-invariant scalar product on $\mathfrak g$.
Such a scalar product always exists by compactness of $K$, hence $\tr(\ad_X)=0$.
At this point recall that $G$, which we assumed to be connected, is said to be unimodular if $\ad_X$ is trace-free for all $X \in \mathfrak g$.
Therefore, we can conclude that $\xi_{\mathfrak m}$ constitutes an obstruction to unimodularity of $G$.
We summarize the information on $\xi$ in the following 
\begin{prop}\label{prop::propertiesxi}
Let $\{e_i\}$ be a symplectic basis of $\mathfrak m$.
The element 
\begin{equation}
\xi = \sum_{i=1}^n [e_i,e_{i+n}] \in \mathfrak g
\end{equation}
does not depend on the chosen basis of $\mathfrak m$ and it is $K$-invariant.
The component of $\xi$ along $\mathfrak k$ belongs to the center of $\mathfrak k$.
The component of $\xi$ along $\mathfrak m$ is zero if and only if $G$ is unimodular.
\end{prop}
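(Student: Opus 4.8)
The plan is to treat the four assertions in turn, leaning on material already assembled before the statement. First I would record that independence of $\xi$ from the chosen symplectic basis has in fact already been established inside the proof of corollary \ref{cor::Hermitianscalarcurvhom}: writing a second symplectic basis as $f_i = A e_i$ with $A$ symplectic and expanding $\sum_{i=1}^n [f_i,f_{i+n}]$ in the original basis, the defining relations $ab^t=ba^t$, $cd^t=dc^t$, $ad^t-bc^t=\id$ of the symplectic matrix force the cross terms to cancel and leave $\sum_{i=1}^n [e_i,e_{i+n}]$. Granting this, the $K$-invariance of $\xi$ is immediate: for $k \in K$ the tuple $\{\Ad_k e_i\}$ is again a symplectic basis of $\mathfrak m$ because $\Ad_k$ preserves $\sigma$, and evaluating $\xi$ in this basis gives on one hand $\Ad_k \xi$ and on the other hand $\xi$ by basis independence.

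Next I would pass to the splitting $\xi = \xi_{\mathfrak k} + \xi_{\mathfrak m}$. Since $\mathfrak m$ is $\Ad(K)$-invariant by construction and $\mathfrak k$ is obviously so, the projections onto the two summands commute with $\Ad_k$, whence both $\xi_{\mathfrak k}$ and $\xi_{\mathfrak m}$ are $K$-invariant. Differentiating the identity $\Ad_{\exp(tX)} \xi_{\mathfrak k} = \xi_{\mathfrak k}$ at $t=0$ along one-parameter subgroups of $K$ yields $[X,\xi_{\mathfrak k}]_{\mathfrak g} = 0$ for all $X \in \mathfrak k$, which is precisely the statement that $\xi_{\mathfrak k}$ lies in the center of $\mathfrak k$.

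The substantive part, and the step I expect to be the main obstacle, is the last assertion, which I would obtain by computing $\tr(\ad_X)$ separately on $\mathfrak k$ and on $\mathfrak m$. For $X \in \mathfrak k$ the endomorphism $\ad_X$ is skew-symmetric with respect to any $\Ad(K)$-invariant scalar product on $\mathfrak g$, which exists by compactness of $K$, so its trace vanishes. For $X \in \mathfrak m$ the key observation is that $\ad_X$ maps $\mathfrak k$ into $\mathfrak m$, because $[\mathfrak k,\mathfrak m]_{\mathfrak g}\subset\mathfrak m$ by $K$-invariance of $\mathfrak m$; hence the $\mathfrak k$-block of $\ad_X$ contributes nothing to the trace and only the $\mathfrak m$-to-$\mathfrak m$ part survives. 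Evaluating this surviving trace in a unitary basis $\{e_i, He_i\}$ and rewriting the scalar product through $\langle\,\cdot\,,\cdot\,\rangle = \sigma(\,\cdot\,,H\,\cdot\,)$ expresses it as $\sum_{i=1}^n \sigma([X,e_i],He_i) - \sigma([X,He_i],e_i)$; applying the closedness identity \eqref{eq::closedsigma} to the triple $(X,e_i,He_i)$ collapses each summand to $\sigma(X,[e_i,He_i])$, giving $\tr(\ad_X) = \sigma(X,\xi_{\mathfrak m})$.

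Finally I would assemble the equivalence. Since $G$ is connected it is unimodular exactly when $\tr(\ad_X)=0$ for every $X \in \mathfrak g$; the $\mathfrak k$-directions contribute nothing, so this holds if and only if $\sigma(X,\xi_{\mathfrak m})=0$ for all $X \in \mathfrak m$, which by nondegeneracy of $\sigma$ on $\mathfrak m$ is equivalent to $\xi_{\mathfrak m}=0$. The delicate bookkeeping lives entirely in the unitary-basis trace computation: one must track the sign coming from $H^2=-\id$, namely $\langle T He_i, He_i\rangle = -\sigma(T He_i, e_i)$, and correctly identify which slot of the cyclic relation \eqref{eq::closedsigma} produces the minus sign that rearranges $-\sigma([e_i,He_i],X)$ into $\sigma(X,[e_i,He_i])$. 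This is the point at which I would be most careful.
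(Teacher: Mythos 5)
Your proposal is correct and follows essentially the same route as the paper, whose ``proof'' is the discussion surrounding corollary \ref{cor::Hermitianscalarcurvhom} and equation \eqref{eq::tradXviaxim}: basis independence via the symplectic-matrix computation, $K$-invariance since $\{\Ad_k(e_i)\}$ is again a symplectic basis and $\Ad_k$ is a Lie algebra automorphism, centrality of $\xi_{\mathfrak k}$ by differentiating the invariance, the identity $\tr(\ad_X)=\sigma(X,\xi_{\mathfrak m})$ on $\mathfrak m$ from $\ad_X(\mathfrak k)\subset\mathfrak m$, a unitary basis and \eqref{eq::closedsigma}, and $\tr(\ad_X)=0$ on $\mathfrak k$ by skew-symmetry for a $K$-invariant scalar product. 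Your sign bookkeeping in the unitary-basis trace computation and the cyclic rearrangement is also accurate, so nothing needs to be fixed.
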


By homogeneity, the squared norm of the Nijenhuis tensor is constant.
More precisely we have the following
\begin{prop}\label{prop::formulaNijenhuis}
The squared norm of the Nijenhuis tensor of $J$ is given by
\begin{equation}
|N|^2 =\frac{1}{8} \sum_{i,j=1}^{2n} |[e_i,e_j]_{\mathfrak m}|^2 + \frac{1}{4} \sum_{i=1}^{2n} \tr(\ad_{e_i}^2) - \frac{1}{2} \tr(H\ad_\xi)
\end{equation}
where $\{e_i\}$ is any orthonormal basis of $\mathfrak m$. 
\end{prop}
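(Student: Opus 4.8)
The plan is to compute $|N|^2$ at the point $x$ directly from the algebraic model of the Nijenhuis tensor provided by Lemma \ref{lem::Nijenhuishomsymm}, which identifies $N(e_i,e_j)_x$ with $N_H(e_i,e_j)$. Since $N$ is a skew two-form with values in $TM$ and the isomorphism $\mathfrak m \cong T_xM$ is isometric, I would use the convention $|N|^2 = \tfrac12\sum_{i,j=1}^{2n}|N_H(e_i,e_j)|^2$ (equivalently $\sum_{i<j}|N(e_i,e_j)|^2$) for an orthonormal basis $\{e_i\}$ of $\mathfrak m$, and expand the square of
\[
4N_H(e_i,e_j) = [He_i,He_j]_{\mathfrak m} - H[He_i,e_j]_{\mathfrak m} - H[e_i,He_j]_{\mathfrak m} - [e_i,e_j]_{\mathfrak m}
\]
into its four diagonal and twelve off-diagonal contributions.

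For the diagonal terms I would exploit that $H$ is an isometry of $\langle\cdot,\cdot\rangle$ and that $\{He_i\}$ is again an orthonormal basis of $\mathfrak m$; reindexing then shows that each of the four squared norms $\sum_{i,j}|[He_i,He_j]_{\mathfrak m}|^2$, $\sum_{i,j}|H[He_i,e_j]_{\mathfrak m}|^2$, $\sum_{i,j}|H[e_i,He_j]_{\mathfrak m}|^2$ and $\sum_{i,j}|[e_i,e_j]_{\mathfrak m}|^2$ equals $\sum_{i,j}|[e_i,e_j]_{\mathfrak m}|^2$. Together with the prefactor $\tfrac{1}{2}\cdot\tfrac{1}{16}$ this already yields the term $\tfrac18\sum_{i,j}|[e_i,e_j]_{\mathfrak m}|^2$, so the remaining two terms of the statement must come entirely from the off-diagonal part.

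The heart of the computation is therefore the twelve cross terms. Here I would systematically rewrite each inner product $\langle\cdot,\cdot\rangle$ as a value of $\sigma$ using $\sigma(X,Y)=\langle HX,Y\rangle$ and the skew-symmetry of $H$, and then repeatedly apply the closedness identity \eqref{eq::closedsigma} together with the cyclic Nijenhuis identity \eqref{eq::cyclicNijenhuis} to move the $\mathfrak m$-projections onto a single bracket. The cross terms should regroup, after reindexing via $\{He_i\}$, into sums of the shape $\sum_i\langle[e_i,[e_i,\cdot]_{\mathfrak g}]_{\mathfrak g},\cdot\rangle$ and $\sum_i\langle[e_i,e_{i+n}]_{\mathfrak g},\cdot\rangle$; the former I would recognize as $\sum_i\tr(\ad_{e_i}^2)$ after completing the trace over $\mathfrak g=\mathfrak k\oplus\mathfrak m$, and the latter as a contraction against $\xi=\sum_i[e_i,e_{i+n}]_{\mathfrak g}$ together with equation \eqref{eq::tradXviaxim}, producing the term $-\tfrac12\tr(H\ad_\xi)$.

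I expect the main obstacle to be exactly this bookkeeping of the cross terms: one must be careful that sums such as $\sum_{i,j}\langle[He_i,He_j]_{\mathfrak m},H[e_i,He_j]_{\mathfrak m}\rangle$ are contracted over both indices, and that the closedness identity is applied so as to assemble a genuine trace over all of $\mathfrak g$ rather than over $\mathfrak m$ alone. The appearance of $\tr(\ad_{e_i}^2)$, which involves the adjoint action on $\mathfrak k$ as well, is the delicate point, as is ensuring that the contributions which are not of trace type cancel through the cyclic identity so that no spurious multiple of $\sum_{i,j}|[e_i,e_j]_{\mathfrak m}|^2$ survives.
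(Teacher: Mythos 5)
Your setup is sound: the normalization $|N|^2=\tfrac12\sum_{i,j}|N_H(e_i,e_j)|^2$ is the one consistent with \eqref{eq::s=scal+2snormN}, and your diagonal computation is correct --- since $H$ is an isometry of $\langle\cdot,\cdot\rangle$ and $\{He_i\}$ is again an orthonormal basis, each of the four squared-norm sums equals $\sum_{i,j}|[e_i,e_j]_{\mathfrak m}|^2$, which produces the coefficient $\tfrac18$. The gap is the cross-term reduction, which is the entire content of the proposition and which you assert rather than perform; moreover, the tools you name cannot close it. Both \eqref{eq::closedsigma} and \eqref{eq::cyclicNijenhuis} involve only $\mathfrak m$-projections of brackets, so every manipulation built from them keeps you inside $\mathfrak m$-projected data (e.g.\ applying closedness to $\sigma([He_i,He_j]_{\mathfrak m},H[e_i,e_j]_{\mathfrak m})$ produces double brackets whose inner bracket is already projected to $\mathfrak m$). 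By contrast, the right-hand side of the statement contains genuine $\mathfrak k$-blocks: for $X\in\mathfrak m$ one has $\tr(\ad_X^2)=\sum_j\langle[X,[X,e_j]_{\mathfrak m}]_{\mathfrak m},e_j\rangle+2\sum_j\langle[X,[X,e_j]_{\mathfrak k}],e_j\rangle$, and $\tr(H\ad_\xi)$ contains the contribution $\sum_j\langle H[\xi_{\mathfrak k},e_j],e_j\rangle$ of the $\mathfrak k$-component of $\xi$. So ``completing the trace over $\mathfrak g$'' is not bookkeeping: you must bring in the infinitesimal $K$-invariance of the metric and of $H$ (for $W\in\mathfrak k$, $\ad_W$ is skew on $\mathfrak m$ and commutes with $H$) and verify that the $\mathfrak k$-dependent parts of $\tfrac14\sum_i\tr(\ad_{e_i}^2)$ and $-\tfrac12\tr(H\ad_\xi)$ cancel against each other, namely that $\sum_{i,j}\langle[e_i,[e_i,e_j]_{\mathfrak k}],e_j\rangle=\sum_j\langle[\xi_{\mathfrak k},He_j],e_j\rangle$; only after this is the residual identity an $\mathfrak m$-only statement accessible to \eqref{eq::closedsigma} and \eqref{eq::cyclicNijenhuis}. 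You flag these as ``delicate points,'' but flagging them is not resolving them, and with the toolkit as described the argument would stall precisely there.

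You should also know that the paper proves the proposition without any expansion of $N_H$: it combines \eqref{eq::s=scal+2snormN} with the standard formula for the scalar curvature of a homogeneous Riemannian metric (Besse, Corollary 7.39), $\scal=-\tfrac14\sum_{i,j}|[e_i,e_j]_{\mathfrak m}|^2-\tfrac12\sum_i\tr(\ad_{e_i}^2)-|Z|^2$ with $\tr(\ad_X)=\langle Z,X\rangle$ for $X\in\mathfrak m$; by \eqref{eq::tradXviaxim} one gets $Z=-H\xi$, whence $|Z|^2=\tr(\ad_Z)=-\tr(\ad_{H\xi})$, and subtracting $\scal$ from $s=\tr(\ad_{H\xi}-H\ad_\xi)$ (corollary \ref{cor::Hermitianscalarcurvhom}) makes the $\tr(\ad_{H\xi})$ terms cancel and yields the stated formula in two lines --- with the $\mathfrak k$-blocks entering automatically through Besse's formula rather than having to be reconstructed by hand. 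Your direct route is legitimate (the paper explicitly allows it, with the hint to take $\{e_i\}$ also symplectic with $e_{i+n}=He_i$), but as written your proposal stops exactly where the work begins; either execute the cross terms with the $K$-invariance identities added to your toolkit, or switch to the short indirect argument.
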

\begin{proof}
Since $|N|^2$ is constant, it is enough to compute it at the point $x$, where we know that it is equal to the squared norm of $N_H$.
Therefore formula in the statement can be deduced by direct computation.
In doing this it could be useful assuming that $\{e_i\}$ is also a symplectic basis and than observing that the final formula actually does not requires this additional assumption.

On the other hand, the statement can be deduced by a combination of \eqref{eq::s=scal+2snormN} together with a well known formula for scalar curvature on homogeneous Riemannian manifold which reads \cite[Corollary 7.39]{Besse1987}:
\begin{equation}
\scal = - \frac{1}{4} \sum_{i,j=1}^{2n} |[e_i,e_j]_{\mathfrak m}|^2 - \frac{1}{2} \sum_{i=1}^{2n} \tr(\ad_{e_i}^2) - |Z|^2.
\end{equation}
Here $Z$ is the unique element of $\mathfrak m$ such that $\tr(\ad_X)= \langle Z,X \rangle$ for all $X \in \mathfrak m$.
After noting that in our situation equation \eqref{eq::tradXviaxim} yields $Z=-H\xi \in \mathfrak m$, one immediately gets
\begin{equation}
\scal = - \frac{1}{4} \sum_{i,j=1}^{2n} |[e_i,e_j]_{\mathfrak m}|^2 - \frac{1}{2} \sum_{i=1}^{2n} \tr(\ad_{e_i}^2) + \tr(\ad_{H\xi}),
\end{equation}
whence the statement follows by formula \eqref{eq::s=scal+2snormN} and corollary \ref{cor::Hermitianscalarcurvhom}.
\end{proof}

\subsection{Almost K\"ahler structures on coset spaces}

Up to now we considered a manifold endowed with a homogeneous almost-K\"ahler metric and we investigated some of its curvature properties by looking at the relevant Lie algebra.
Conversely, given a connected Lie group $G$ and an even-codimensional compact subgroup $K\subset G$, a homogeneous almost K\"ahler structure on the coset space $M=G/K$ is completely determined by a suitable linear structure on the Lie algebra $\mathfrak g$.
The remainder of this section is devoted in showing this.
In order to have $G$ acting almost effectively on $M$, we also assume that $K$ contains no non-discrete normal subgroups of $G$.

Let $\mathfrak g$ be the Lie algebra of $G$, and let $\mathfrak k \subset \mathfrak g$ be the Lie algebra of $K$.
By our assumptions $\mathfrak k$ has even codimension, say $2n$, inside $\mathfrak g$.
Therefore we can consider the adjoint action of $K$ on $\mathfrak g$ and, thanks to compactness of $K$, choose a $K$-invariant $2n$-dimensional subspace $\mathfrak m \subset \mathfrak g$ such that
\begin{equation}
\mathfrak g = \mathfrak k \oplus \mathfrak m.
\end{equation}
Let $\sigma$ be a $K$-invariant symplectic form on $\mathfrak m$.
By definition, this means that $\sigma$ is a linear two-form on $\mathfrak m$ such that $\sigma^n$ is non-zero and for all $X,Y, Z \in \mathfrak m$ it holds
\begin{equation}\label{eq:closednessonm}
\sigma([X,Y]_{\mathfrak m},Z) + \sigma([Y,Z]_{\mathfrak m},X) + \sigma([Z,X]_{\mathfrak m},Y) = 0.
\end{equation}
Moreover, $K$-invariance means that 
\begin{equation}
\sigma(\Ad_k(X),\Ad_k(Y)) = \sigma(X,Y)
\end{equation}
for all $X,Y \in \mathfrak m$ and for all $k \in K$.

In particular, $(\mathfrak m, \sigma)$ is a symplectic vector space endowed with a (symplectic) representation of $K$. 
Let $H$ be a $K$-invariant compatible almost complex structure on $\mathfrak m$, that is an endomorphism of $\mathfrak m$ commuting with $\Ad_g$ for all $g \in K$, such that $-H^2$ is the identity on $\mathfrak m$ and satisfying the following compatibility conditions
\begin{equation}
\sigma(HX,HY)=\sigma(X,Y), \qquad \sigma(X,HX)>0,
\end{equation}
for all non-zero $X,Y \in \mathfrak m$.
Showing the existence of such a $K$-invariant compatible almost complex structure $H$ is not completely trivial, whereas it can be done by standard methods.
For convenience of the reader we include here a proof of the following
\begin{prop}\label{prop::existenceH}
The space of $K$-invariant almost-complex structures on $\mathfrak m$ compatible with $\sigma$ is non-empty.
\end{prop}
\begin{proof}
Let us start with any $K$-invariant scalar product $h$ on $\mathfrak m$, whose existence is guaranteed by compactness of $K$ and standard average argument.
Whereas $h$ may be completely unrelated to the symplectic form $\sigma$, non degeneracy of $h$ implies there exists a unique $A \in \End(\mathfrak m)$ such that $h(AX,Y)=\sigma(X,Y)$. 
Clearly $A$ is skew-symmetric with respect to $h$. 
Moreover, since both $h$ and $\sigma$ are $K$-invariant, $A$ is $K$-invariant as well.
Note that $-A^2$ is symmetric and positive definite.
Hence there exists a unique symmetric and positive definite $B \in \End(\mathfrak m)$ such that $B^2=-A^2$ and $J=B^{-1}A$.
In order to check that $A$ and $B$ commute, let $\mathfrak m_j$ be the eigenspace of $B$ with eigenvalue $\lambda_j$.
Since $\mathfrak m_j$ is also an eigenspace for $-A^2$ with eigenvalue $\lambda_j^2$, for all $X \in \mathfrak m_j$ one has
\begin{equation}
(-A^2)AX = \lambda_j^2 AX,
\end{equation}
whence $AX \in \mathfrak m_j$, and $A, B$ commute as stated.
Similarly one shows that $B$ is $K$-invariant, for one has
\begin{equation}
B^2\Ad_kX = -\Ad_k A^2 X = \lambda_j^2 \Ad_kX,
\end{equation}
whence $\Ad_kX \in \mathfrak m_j$ for all $k \in K$.
Finally it is easy to check that $H=B^{-1}A$ is a $K$-invariant almost complex structure on $\mathfrak m$ compatible with $\sigma$.
\end{proof}

Now let $M$ be the left-coset space $G/K$.
Let $x \in M$ be the class of the identity element of $G$, and consider the tangent space $T_xM$.
The linear map from $\mathfrak m$ to $T_xM$ which transforms $X$ into $\left.\frac{d}{dt}\right|_{t=0}[\exp(tX)]$ is an isomorphism that intertwines the adjoint action of $K$ on $\mathfrak m$ with the isotropy action of $K$ in $T_xM$.
Moreover, the two form $\sigma$ and the endomorphism $H$ introduced above on $\mathfrak g$ define a $G$-invariant symplectic form $\omega$ and a $G$-invariant compatible complex structure $J$ on $M$.
More specifically, denoted by $l_g$ the left action of $g \in G$ on $M$, they are defined by
\begin{equation}\label{eq::defhomogeneousomegaandJ}
\omega(u_1,u_2) = \sigma(X_1,X_2), \qquad
Ju_1 = d{l_g}(HX_1)
\end{equation}
for all $u_i \in T_{[g]}M$, and $X_i \in \mathfrak m$ such that $dl_{g^{-1}} u_i = X_i$.
Thanks to the hypotheses on $\sigma$ and $H$, it follows that $\omega$ and $J$ are well defined.
In particular, replacing $g$ with $gk$ for some $k \in K$ gives $dl_{(gk)^{-1}}u_i = dl_{k^{-1}}X_i = \Ad_{k^{-1}}(X_i)$.
Therefore the right hand sides of \eqref{eq::defhomogeneousomegaandJ} do not change for $\sigma$ is $K$-invariant and $H$ commutes with the adjoint action of $K$.
Finally, note that closedness of $\omega$ follows by \eqref{eq:closednessonm}.
Thus we have proved the following

\begin{thm}\label{thm::constructionhomogeneousalmostkahler}
Let $G$ be a connected Lie group, and let $K \subset G$ be an even-dimensional compact subgroup which contains no non-discrete normal subgroups of $G$. 
Let $M$ be the coset space $G/K$, and denote by $\mathfrak g$ and $\mathfrak k$ the Lie algebras of $G$ and $K$ respectively. 
Fix a $K$-invariant subspace $\mathfrak m \subset \mathfrak g$ such that $\mathfrak g = \mathfrak k \oplus \mathfrak m$.
Then, given a $K$-invariant symplectic form $\sigma$ on $\mathfrak m$ satisfying
\begin{equation}\label{eq:closedonm}
\sigma([X,Y]_{\mathfrak m},Z) + \sigma([Y,Z]_{\mathfrak m},X) + \sigma([Z,X]_{\mathfrak m},Y) = 0,
\end{equation}
for all $X,Y,Z \in \mathfrak m$, and a $K$-invariant compatible complex structure $H$ on $\mathfrak m$, 
it is defined a homogeneous almost-K\"ahler structure on $M$ by letting
\begin{equation}
\omega(u,v) = \sigma(dl_{g^{-1}}u,dl_{g^{-1}}v), \qquad
Ju = dl_g H dl_{g^{-1}} u
\end{equation}
for all $u,v \in T_{[g]}M$.
\end{thm}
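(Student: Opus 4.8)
The plan is to verify, in turn, that the two prescriptions in the statement define honest tensor fields on $M$, that they are $G$-invariant, that $\omega$ is a (nondegenerate) two-form compatible with the almost complex structure $J$, and finally that $\omega$ is closed. Only the last point is more than a formal check: everything else follows from $K$-invariance of $\sigma$ and $H$ together with transitivity of the $G$-action, and indeed the closedness is exactly the computation of Subsection \ref{subsec::gensetup} run in reverse.

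First I would settle well-definedness. A point $[g]\in M$ has representatives of the form $gk$ with $k\in K$, and under the isomorphism $\mathfrak m \cong T_xM$ sending $X$ to $\left.\frac{d}{dt}\right|_{t=0}[\exp(tX)]$ one has $dl_{(gk)^{-1}}u = \Ad_{k^{-1}}(dl_{g^{-1}}u)$ for every $u\in T_{[g]}M$. Hence the right-hand side of the formula for $\omega$ is unchanged because $\sigma$ is $K$-invariant, and the formula for $J$ is unchanged because $H$ commutes with $\Ad_k$; this is precisely the remark made just before the statement. The $G$-invariance $l_h^*\omega=\omega$ and $J\,dl_h = dl_h\,J$ is then immediate from the left-translation structure of the definitions, since $dl_{(hg)^{-1}}\,dl_h = dl_{g^{-1}}$.

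Next I would record the pointwise algebraic content. By construction $\omega_x$ and $J_x$ coincide with $\sigma$ and $H$ under $\mathfrak m\cong T_xM$, so $-J^2=\id$, the nondegeneracy $\omega^n\neq 0$, and the two compatibility relations $\omega(JX,JY)=\omega(X,Y)$ and $\omega(X,JX)>0$ all hold at $x$ as restatements of the corresponding properties of the pair $(\sigma,H)$. By the $G$-invariance just established, each of these identities propagates to every point of $M$, so $J$ is a well-defined almost complex structure compatible with $\omega$, and the associated metric $g(\cdot,\cdot)=\omega(\cdot,J\cdot)$ is Riemannian.

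The one substantive step is closedness of $\omega$, and this is where care is needed. Since $\omega$ is $G$-invariant and $G$ acts transitively, it suffices to check $d\omega=0$ at $x$, and since the fundamental vector fields span $T_xM$ it is enough to evaluate $d\omega$ on a triple $X,Y,Z\in\mathfrak m$. I would expand $d\omega(X,Y,Z)$ by the usual invariant formula and then use $L_X\omega=0$, together with its cyclic analogues, to replace each derivative term $X\omega(Y,Z)$ by $\omega([X,Y],Z)+\omega(Y,[X,Z])$, exactly as in the derivation of \eqref{eq::closedsigma} in Subsection \ref{subsec::gensetup}. After this substitution the derivative terms combine with the bracket terms of $d\omega$ into the single cyclic sum $\omega([X,Y],Z)+\omega([Y,Z],X)+\omega([Z,X],Y)$; converting the vector-field brackets into Lie-algebra brackets via the anti-homomorphism \eqref{eq::relationbetweenbrackets} and evaluating at $x$, where every field in $\mathfrak k$ vanishes so only the $\mathfrak m$-projections survive, reduces $(d\omega)_x(X,Y,Z)$ to (minus) the expression
\begin{equation*}
\sigma([X,Y]_{\mathfrak m},Z)+\sigma([Y,Z]_{\mathfrak m},X)+\sigma([Z,X]_{\mathfrak m},Y),
\end{equation*}
which vanishes by hypothesis \eqref{eq:closedonm}. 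The main obstacle is purely bookkeeping: keeping the signs straight through the anti-homomorphism and confirming that the $\mathfrak k$-components genuinely drop out upon evaluation at $x$, so that the argument of Subsection \ref{subsec::gensetup} is reversed faithfully.
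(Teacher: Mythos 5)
Your proof is correct and takes essentially the same approach as the paper: well-definedness via $dl_{(gk)^{-1}}u=\Ad_{k^{-1}}(dl_{g^{-1}}u)$ together with $K$-invariance of $\sigma$ and $H$, $G$-invariance built into the left-translation definitions, and closedness checked at $x$ against the cocycle condition \eqref{eq:closedonm}. The only difference is one of detail: the paper compresses the last step into the assertion that ``closedness of $\omega$ follows by \eqref{eq:closednessonm},'' while you explicitly run the computation of subsection \ref{subsec::gensetup} in reverse, which is exactly the intended justification.
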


This result then provides an effective way for producing homogeneous almost-K\"ahler structures on $G/K$ just by studying suitable linear structures on $\mathfrak m$.
As we already seen with the complex structure $H$, sometimes is useful to extend those structures to the whole $\mathfrak g$. Therefore, let us extend $\sigma$ to a two-form on $\mathfrak g$ by letting $\sigma(X,Y) = 0$ for all $X \in \mathfrak k$ and $Y \in \mathfrak g$.
Such a $\sigma$ stay $K$-invariant and satisfies 
\begin{itemize}
\item $\sigma([X,Y]_{\mathfrak g},Z) + \sigma([Y,Z]_{\mathfrak g},X) + \sigma([Z,X]_{\mathfrak g},Y) = 0$ for all $X,Y,Z \in \mathfrak g$,
\item $\sigma(X,Y) = 0$ for all $X \in \mathfrak k$, $Y \in \mathfrak g$,
\item the $n$-th wedge product $\sigma^n$ is nonzero.
\end{itemize}
On the other hand, it is easy to see that any $K$-invariant two-form on $\mathfrak g$ satisfying the three conditions above restricts to a $K$-invariant symplectic form $\sigma$ on $\mathfrak m$ satisfying \eqref{eq:closedonm}.

Note that the first condition means that $\sigma$ is a closed two-cocycle of the Chevalley-Eilenberg complex of $\mathfrak g$ and, as a consequence, it represents a class $[\sigma]$ inside the Lie algebra cohomology group $H^2(\mathfrak g)$.
Full details of that theory can be find in the work of Chevalley and Eilenberg \cite{ChevallayEilenberg1947}.
Here we recall just few elementary facts. A one-cochain of that theory is nothing but a linear one-form $\theta \in \mathfrak g^*$, and its differential $\delta\theta$ is the two-form defined by
\begin{equation}
\delta\theta(X,Y) = - \theta([X,Y]_{\mathfrak g}).
\end{equation}
Note that by corollary \ref{cor::rhohomwithW} the Chern-Ricci form of a homogeneous compatible almost complex structure on a symplectic manifold is determined by the exact two-form $-\delta\zeta$.
More specifically, similarly to \eqref{eq::defhomogeneousomegaandJ} one has
\begin{equation}
\rho(u_1,u_2) = \zeta([X_1,X_2]_{\mathfrak g})
\end{equation}
for all $u_i \in T_{[g]}M$, and $X_i \in \mathfrak m$ such that $dl_{g^{-1}} u_i = X_i$.
Formula above yields a canonical representative of $4\pi c_1 \in H_{dR}^2(M)$ which in general is simpler than $\rho$.
To see this consider the decomposition $\zeta = \zeta_{\mathfrak k} + \zeta_{\mathfrak m}$ according to $\mathfrak g = \mathfrak k \oplus \mathfrak m$, so that equation above can be rewritten as
\begin{equation}\label{eq::decompositionchernricciform}
\rho(u_1,u_2) = \zeta_{\mathfrak k}([X_1,X_2]_{\mathfrak k}) + \zeta_{\mathfrak m}([X_1,X_2]_{\mathfrak m}).
\end{equation}
Now observe that by the relation between Lie brackets given in \eqref{eq::relationbetweenbrackets} it follows that the second summand above is equal to $d\alpha(u_1,u_2)$, where $\alpha$ is the $G$-invariant differential one-form on $M$ defined by $\alpha(u_1) = -\zeta_{\mathfrak m}(X_1)$.
Moreover we highlighted after corollary \ref{cor::rhohomwithW} that $\zeta_{\mathfrak k}$ is determined by its restriction to the center $\mathfrak z$ of $\mathfrak k$. Hence one has the following
\begin{prop}
The first Chern class of $M$ is represented by the $G$-invariant differential two-form $\rho'$ on $M$ defined by
\begin{equation}
\rho'(u_1,u_2) = \zeta([X_1,X_2]_{\mathfrak z}),
\end{equation}
for all $u_i \in T_{[g]}M$, and $X_i \in \mathfrak m$ such that $dl_{g^{-1}} u_i = X_i$.
\end{prop}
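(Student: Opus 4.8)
The plan is to exhibit $\rho'$ as cohomologous to the Chern--Ricci form $\rho$, which by the discussion in Section \ref{sec::specialcacs} represents $4\pi c_1$. Concretely, I would prove that $\rho = \rho' + d\alpha$ for the $G$-invariant one-form $\alpha$ introduced before the statement, so that $\rho$ and $\rho'$ differ by an exact form and therefore carry the same de Rham class $4\pi c_1$. The natural starting point is the decomposition \eqref{eq::decompositionchernricciform}, which splits $\rho$ into a $\mathfrak{k}$-part $\zeta_{\mathfrak{k}}([X_1,X_2]_{\mathfrak{k}})$ and an $\mathfrak{m}$-part $\zeta_{\mathfrak{m}}([X_1,X_2]_{\mathfrak{m}})$, and I would treat the two summands separately.

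First I would dispose of the $\mathfrak{m}$-part by identifying it with $d\alpha$. Writing out the exterior derivative of the $G$-invariant one-form $\alpha$ on a pair of fundamental vector fields associated with $X_1,X_2 \in \mathfrak{m}$, the Leibniz terms $X_1(\alpha(X_2))$ and $X_2(\alpha(X_1))$ are computed at $x$ by differentiating the $\Ad$-action along the orbit through $x$, while the remaining term is governed by the bracket of the two fundamental fields. Using the anti-homomorphism \eqref{eq::relationbetweenbrackets} together with the fact that, under $\mathfrak{m} \cong T_xM$, the covector $\alpha_x$ equals $-\zeta_{\mathfrak{m}}$ restricted to $\mathfrak{m}$, these contributions collapse to $d\alpha(u_1,u_2) = \zeta_{\mathfrak{m}}([X_1,X_2]_{\mathfrak{m}})$, which is exactly the $\mathfrak{m}$-summand. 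This is the step requiring the most care, since one must track the $\Ad$-equivariance of the fundamental vector fields and the sign convention in \eqref{eq::relationbetweenbrackets}; the rest is formal.

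Next I would simplify the $\mathfrak{k}$-part. Since $\zeta_{\mathfrak{k}}$ agrees with $\zeta$ on $\mathfrak{k}$, the first summand equals $\zeta([X_1,X_2]_{\mathfrak{k}})$. Decomposing $\mathfrak{k} = \mathfrak{z} \oplus \mathfrak{s}$ into its center and its semisimple part, and invoking the fact established after Corollary \ref{cor::rhohomwithW} that $\mathfrak{s}$ lies in the kernel of $\zeta$, only the $\mathfrak{z}$-component survives, so that $\zeta([X_1,X_2]_{\mathfrak{k}}) = \zeta([X_1,X_2]_{\mathfrak{z}}) = \rho'(u_1,u_2)$. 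Combining the two parts then yields $\rho = \rho' + d\alpha$, which is the heart of the argument.

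It remains only to verify that $\rho'$ is a genuine closed $G$-invariant two-form, so that it legitimately represents a cohomology class. Its $G$-invariance is immediate, because both $\zeta$ and the projection onto the $\Ad(K)$-stable subspace $\mathfrak{z}$ are $K$-invariant; closedness comes for free from the identity $\rho' = \rho - d\alpha$, since $\rho$ is closed and $d\alpha$ is exact. Hence $[\rho'] = [\rho] = 4\pi c_1$, which is the assertion. In effect the proof merely assembles the two observations already recorded in the preceding discussion, the sole genuinely computational point being the verification that the $\mathfrak{m}$-summand is the exact form $d\alpha$.
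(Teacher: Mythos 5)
Your proposal is correct and takes essentially the same route as the paper, which also obtains the proposition from the decomposition \eqref{eq::decompositionchernricciform} by identifying the $\mathfrak m$-summand with $d\alpha$ via the bracket anti-homomorphism \eqref{eq::relationbetweenbrackets} and reducing the $\mathfrak k$-summand to $\zeta([X_1,X_2]_{\mathfrak z})$ using that the semisimple part $\mathfrak s \subset \mathfrak k$ lies in $\ker \zeta$. The only difference is that you spell out the invariant-form computation showing $d\alpha(u_1,u_2)=\zeta_{\mathfrak m}([X_1,X_2]_{\mathfrak m})$, which the paper leaves implicit.
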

A straightforward consequence is the following
\begin{cor}\label{cor::Kdiscreteimpliesc_1=0}
If $K$ has discrete center, then $(M,\omega)$ has $c_1=0$.
\end{cor}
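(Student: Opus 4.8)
The plan is to read the vanishing of $c_1$ straight off the invariant representative supplied by the preceding proposition. There $4\pi c_1$ is represented by the $G$-invariant two-form $\rho'(u_1,u_2)=\zeta([X_1,X_2]_{\mathfrak z})$, which depends only on the restriction $\zeta|_{\mathfrak z}$ of $\zeta$ to the center $\mathfrak z$ of $\mathfrak k$. Hence it suffices to show that the hypothesis forces $\zeta|_{\mathfrak z}$ to vanish, for then $\rho'\equiv 0$ and $4\pi c_1=[\rho']=0$ in $H_{dR}^2(M)$, giving $c_1=0$.

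First I would translate the group-theoretic hypothesis into Lie-algebra terms. The Lie algebra of the center $Z(K)$ is exactly the space $\mathfrak k^K$ of $\Ad(K)$-fixed vectors in $\mathfrak k$: an $\Ad(K)$-fixed $X$ lies in $\mathfrak z$ (being fixed already by $\Ad$ of the identity component), and satisfies $k\exp(X)k^{-1}=\exp(\Ad_k X)=\exp(X)$, so $\exp(X)\in Z(K)$; conversely every central one-parameter subgroup produces such an $X$. Since the semisimple summand $\mathfrak s$ of $\mathfrak k$ carries no nonzero invariants, $\mathfrak k^K=\mathfrak z^K$, the $\Ad(K)$-invariants inside $\mathfrak z$. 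Therefore discreteness of $Z(K)$ is equivalent to $\mathfrak z^K=0$. When $K$ is connected this is simply $\mathfrak z=0$, and then $\rho'$ vanishes trivially.

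Next I would exploit the $K$-invariance of $\zeta$ noted after corollary \ref{cor::rhohomwithW}. Averaging over the compact group $K$, for every $v\in\mathfrak z$ one has $\zeta(v)=\int_K \zeta(\Ad_k v)\,dk=\zeta(Pv)$, where $P=\int_K \Ad_k\,dk$ is the orthogonal projection of $\mathfrak z$ onto $\mathfrak z^K$. Consequently $\mathfrak z^K=0$ forces $\zeta|_{\mathfrak z}=0$. Combining this with the first step, and observing that $[X_1,X_2]_{\mathfrak z}\in\mathfrak z$, we get $\rho'(u_1,u_2)=\zeta([X_1,X_2]_{\mathfrak z})=0$, whence $c_1=0$.

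I expect the only genuinely delicate point to be the first step, namely the passage from discreteness of the group center $Z(K)$ to the vanishing of the invariant subspace $\mathfrak z^K$ when $K$ is disconnected; the extra components of $K$ may act nontrivially on the central torus of the identity component, so one cannot conclude $\mathfrak z=0$ outright but only $\mathfrak z^K=0$. The averaging argument of the third paragraph is precisely what bridges this gap, since it shows that the $K$-invariant functional $\zeta$ sees only the invariant part $\mathfrak z^K$. For connected $K$ none of this is needed and the statement is immediate.
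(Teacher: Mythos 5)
Your proof is correct and takes the same route the paper intends: the paper states this corollary as an immediate consequence of the preceding proposition, the implicit argument being that a discrete center forces $\mathfrak z=0$, so that the canonical representative $\rho'(u_1,u_2)=\zeta([X_1,X_2]_{\mathfrak z})$ of $4\pi c_1$ vanishes identically. Where you genuinely add something is in the treatment of disconnected $K$: the implication ``$Z(K)$ discrete $\Rightarrow \mathfrak z=0$'' is valid only for connected $K$ (the Lie algebra of $Z(K)$ is the fixed subspace $\mathfrak k^K=\mathfrak z^K$, which can be zero while $\mathfrak z\neq 0$ when the components act nontrivially on the central torus, as for a group of type $O(2)$). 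In the general setup of section \ref{sec::homsympman} the isotropy $K$ is compact but not assumed connected (connectedness is only proved later, in lemma \ref{lem::Kconnected}, under the semi-simple co-adjoint orbit hypotheses), so your averaging step --- using the $K$-invariance of $\zeta$ noted after corollary \ref{cor::rhohomwithW} to get $\zeta|_{\mathfrak z}=\zeta\circ P$ with $P=\int_K \Ad_k\,dk$ the projection onto $\mathfrak z^K$, whence $\zeta|_{\mathfrak z}=0$ when $\mathfrak z^K=0$ --- closes a case the paper's one-line reading elides. In short: same mechanism (kill $\zeta$ on $\mathfrak z$ so that $\rho'\equiv 0$), but your version is valid without connectedness of $K$, at the modest cost of the fixed-point translation $Z(K)$ discrete $\Leftrightarrow \mathfrak z^K=0$ and the Haar average.
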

On the other hand, assuming that $(\mathfrak k, \mathfrak m)$ is a Cartan pair of $\mathfrak g$, forces $J$ to be integrable.
This follows directly by lemma \ref{lem::Nijenhuishomsymm} after realizing that in our situation $(\mathfrak k, \mathfrak m)$ is a Cartan pair if and only if $[\mathfrak m, \mathfrak m] \subset \mathfrak k$.
Therefore it holds the following
\begin{cor}
If $\mathfrak g = \mathfrak k \oplus \mathfrak m$ with centerless $\mathfrak k$ and $[\mathfrak m, \mathfrak m] \subset \mathfrak k$, then any homogeneous compatible almost complex structure on $(M,\omega)$ is integrable and satisfies $\rho = 0$.
\end{cor}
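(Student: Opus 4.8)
The plan is to treat the two conclusions—integrability and $\rho=0$—separately, each reducing to a formula already established in this section combined with exactly one of the two hypotheses. Since the text preceding the statement has essentially carried out the integrability half via the Cartan-pair observation, the only genuine content to organize is the vanishing of $\rho$, and in particular the recognition that both hypotheses are needed there.

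For integrability I would simply make the Cartan-pair remark explicit. By Lemma~\ref{lem::Nijenhuishomsymm} the Nijenhuis tensor of any homogeneous compatible almost complex structure equals, at $x$, the algebraic tensor $N_H$ attached to the corresponding $H$, and $4N_H(X,Y)$ is a combination of $[HX,HY]_{\mathfrak m}$, $[HX,Y]_{\mathfrak m}$, $[X,HY]_{\mathfrak m}$ and $[X,Y]_{\mathfrak m}$. Because $H$ preserves $\mathfrak m$, every one of these is the $\mathfrak m$-component of a bracket of two elements of $\mathfrak m$, and the hypothesis $[\mathfrak m,\mathfrak m]\subset\mathfrak k$ says precisely that each such component vanishes. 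Hence $N_H=0$, so $N$ vanishes at $x$ and, by $G$-invariance, identically; thus $J$ is integrable, regardless of which compatible $H$ was chosen.

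For $\rho=0$ I would start from the decomposition \eqref{eq::decompositionchernricciform},
\[
\rho(u_1,u_2)=\zeta_{\mathfrak k}([X_1,X_2]_{\mathfrak k})+\zeta_{\mathfrak m}([X_1,X_2]_{\mathfrak m}),
\]
and kill both summands. The identity $[\mathfrak m,\mathfrak m]\subset\mathfrak k$ gives $[X_1,X_2]_{\mathfrak m}=0$, so the second term drops out. For the first term I would invoke the fact recorded after Corollary~\ref{cor::rhohomwithW} that $\zeta_{\mathfrak k}$ is determined by its restriction to the center $\mathfrak z$ of $\mathfrak k$ and vanishes on the semisimple part; since $\mathfrak k$ is centerless, $\mathfrak z=0$, so $\mathfrak k$ is semisimple and $\zeta_{\mathfrak k}=0$. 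With both summands zero, \eqref{eq::decompositionchernricciform} yields $\rho\equiv0$.

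The one point deserving care—and the only substance beyond the preceding corollaries—is that neither hypothesis can be dropped, and it is worth tracking which does what. The Cartan-pair condition alone annihilates only the $\mathfrak m$-part of $\rho$ (while securing integrability), whereas the $\mathfrak k$-part is exactly the piece carrying $c_1$ through the center of $\mathfrak k$, cf.\ Corollary~\ref{cor::Kdiscreteimpliesc_1=0}; that piece is removed solely by centerlessness. For instance, on $\mathbb{CP}^1=SU(2)/U(1)$ one has $[\mathfrak m,\mathfrak m]\subset\mathfrak k$ (it is a symmetric pair), yet $\mathfrak k$ is one-dimensional and abelian, hence equal to its own nontrivial center, and $\rho$ is a nonzero multiple of $\omega$; this confirms that the centerless assumption is genuinely needed for the conclusion $\rho=0$.
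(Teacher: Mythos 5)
Your proof is correct and follows essentially the same route as the paper: integrability comes from Lemma~\ref{lem::Nijenhuishomsymm} once $[\mathfrak m,\mathfrak m]\subset\mathfrak k$ kills every $\mathfrak m$-projected bracket in $N_H$, and $\rho=0$ comes from the decomposition \eqref{eq::decompositionchernricciform} together with the paper's observation that $\zeta$ vanishes on the semisimple part of $\mathfrak k$, which is all of $\mathfrak k$ when the center is trivial. Your closing $\mathbb{CP}^1=SU(2)/U(1)$ check that neither hypothesis is redundant is a sound addition beyond what the paper records, but not a different method.
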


%

\section{Applications}\label{sec::applications}

In this section we apply the theory developed in the previous section in order to produce examples of symplectic manifolds endowed with (homogeneous) special compatible almost complex structures.

\subsection{Symplectic Lie groups}\label{subsec::sympliegroups}

A symplectic Lie group is a connected Lie group $G$ equipped with a left-invariant symplectic structure $\omega$.
Hence it fits in the picture discussed in the previous section by taking $M=G$ and $K=\{e\}$.
In particular, the symplectic form $\omega$ is determined by a linear symplectic form $\sigma$ on $\mathfrak g$ satisfying 
\begin{equation}\label{eq::closednessforgroups}
\sigma([X,Y]_{\mathfrak g},Z) + \sigma([Y,Z]_{\mathfrak g},X) + \sigma([Z,X]_{\mathfrak g},Y) = 0,
\end{equation}
for all $X,Y,Z \in \mathfrak g$.
Note that by corollary \ref{cor::Kdiscreteimpliesc_1=0}, any symplectic Lie group has vanishing first Chern class.

Symplectic Lie group satisfy quite stringent algebraic conditions.
Indeed, Chu proved that a unimodular symplectic group has to be solvable, and in case $G$ has dimension four the same is true even dropping the unimodularity assumption \cite[Theorems 9 and 11]{Chu1974}.
This rules out two important classes of Lie groups.
In fact, mentioned results of Chu imply that semi-simple Lie groups cannot be symplectic and any compact symplectic Lie group must be a torus.

Any left-invariant compatible almost complex structure on $(G,\omega)$ is determined by a linear complex structure on $\mathfrak g$ compatible with $\sigma$.
Therefore, if $2n$ is the dimension of $G$, then fixing a reference symplectic basis of $\mathfrak g$ and a reference compatible almost complex structure has the effect of identifying $\mathfrak g$ with $\mathbf R^{2n}$ equipped with the standard symplectic structure $\omega_0$ and the standard complex structure $J_0$.
Consequently, the space of left-invariant compatible almost complex structures on $(G,\omega)$ is identified with the Siegel upper half-space $Sp(2n,\mathbf R) / U(n)$.
Here we think of $Sp(2n,\mathbf R)$ as the group of all real $2n\times 2n$-matrices $g$ such that $\omega_0(gu,gv)=\omega_0(u,v)$ for all $u,v \in \mathbf R^{2n}$, and $U(n) \subset Sp(2n,\mathbf R)$ as the subgroup constituted by all $g$'s that commute with $J_0$.
The Siegel half-space is a co-adjoint orbit for the group $Sp(2n,\mathbf R)$ and, as such, it admits a (homogeneous) symplectic structure and a Hamiltonian action of $Sp(2n,\mathbf R)$ with moment map $\mu : Sp(2n,\mathbf R) / U(n) \to \mathfrak {sp}(2n,\mathbf R)^*$ given by
\begin{equation}\label{eq::momentSiegeluhs}
\mu(H)(T) = -\tr(HT) 
\end{equation}
for all $H = gJ_0g^{-1} \in Sp(2n,\mathbf R) / U(n)$, and $T \in \mathfrak {sp}(2n,\mathbf R)$.
This structure is well known, and it fits readily in general theory described in section \ref{sec::homsympman} by considering on $\mathfrak {sp}(2n,\mathbf R)$ the two-form defined by $\tr(J_0[S,T])$, and the complex structure which transforms $T$ to $J_0T$.

By corollary \ref{cor::rhohomwithW} it turns out that, in case $G$ is unimodular, the right hand side of formula \eqref{eq::momentSiegeluhs} gives the Chern-Ricci form of the compatible almost complex structure associated to $H$ once $T$ is replaced with $\ad_{[X,Y]}$ for all $X,Y \in \mathfrak g$.
As a consequence, if the derived sub-algebra $\mathfrak g' = [\mathfrak g, \mathfrak g]$ generated a closed subgroup $G'$ of $G$ and if $G'$ acted by symplectic transformations on $\mathfrak g$, the problem of finding which compatible complex structures on $\mathfrak g$ correspond to left-invariant special compatible almost complex structures on $G$ would fit into a moment map picture.
Unfortunately, in general it seems not to be the case since $G'$ is not closed nor it acts symplectically on $\mathfrak g$.

\subsubsection{The universal cover of the Kodaira-Thurston manifold}\label{subsubsect::univcoverKT}

Consider the four-dimensional real Lie algebra $\mathfrak g$ spanned by $e_1,\dots,e_4$ with commuting relations $[e_1,e_2]=e_4$.
One can check that the simply connected Lie group $G$ associated with $\mathfrak g$ is $\mathbf R^4$ endowed with the product defined by
\begin{equation}
x \cdot y = x+y + x_1y_2e_4
\end{equation}
for all $x,y \in G$.
Perhaps more commonly, $G$ turns out to be isomorphic to the direct product of $\mathbf R$ together with the three-dimensional Heisenberg group, and it can be represented as the group of matrices of the form
\begin{equation}
\left(
\begin{array}{cccc}
e^{x_3} & 0 & 0 & 0 \\
0 & 1 & x_1 & x_4 \\
0 & 0 & 1 & x_2 \\
0 & 0 & 0 & 1
\end{array}
\right).
\end{equation}
However, we won't need a specific representation of $G$.
In fact, according to theory developed in section \ref{sec::homsympman} we will equip $G$ with a homogeneous symplectic structure and a compatible special almost complex structure just by working on $\mathfrak g$.
In order to do this, consider the dual basis $\varphi^1,\dots,\varphi^4 \in \mathfrak g^*$ of the basis $e_1,\dots,e_4$ and let
\begin{equation}
\sigma = \varphi^1\wedge \varphi^3 + \varphi^2 \wedge \varphi^4
\end{equation}
and
\begin{equation}
H = e_3 \otimes \varphi^1 - e_1 \otimes \varphi^3 + e_4 \otimes \varphi ^2 - e_2 \otimes \varphi^4.
\end{equation}
Therefore $\sigma$ and $H$ correspond to the standard symplectic and complex structures on $\mathbf R^4$ once this is identified with $\mathfrak g$ via the basis $e_1,\dots,e_4$.
One can readily check that $\sigma$ is closed, i.e. it satisfies \eqref{eq::closednessforgroups}. 
Moreover, note that the basis $e_1,\dots,e_4$ turns out to be both symplectic and orthonormal with respect the scalar product induced by $\sigma$ and $H$, and it satisfies $He_1=e_3$, $He_2=e_4$. As a consequence, one calculates $\xi = [e_1,e_3] + [e_2,e_4] = 0$ whence, in particular by proposition \ref{prop::propertiesxi}, $G$ turns out to be unimodular.

As above, let $\omega$ and $J$ be the left-invariant symplectic and complex structure induced on $G$ by $\sigma$ and $H$ respectively.
By corollary \ref{cor::rhohomwithW} and discussion right after that, the Chern-Ricci form $\rho$ of $J$ is completely determined by the one-form $\zeta \in \mathfrak g^*$ defined by \eqref{eq::defzeta}.
In the current sitation $\zeta$ is given by
\begin{equation}
\zeta(X) 
= \sum_{i=1}^4 -\varphi^i(H[X,e_i])
= -\varphi^1(X)\varphi^2(He_4) + \varphi^2(X)\varphi^1(He_4)
\end{equation}
whence $\zeta = \varphi^1$.
Therefore, by corollary \ref{cor::rhohomwithW} and commuting relations of $\mathfrak g$ it follows that 
\begin{equation}
\rho=0.
\end{equation}
As a consequence, the Hermitian scalar survature $s$ of $J$ vanishes.
Finally we calculate the norm of the Nijenhuis tensor of $J$.
Recalling that the only non-zero commuting relation is $[e_1,e_2]=e_4$, it follows readily that $\ad_{e_i}^2=0$ for all $i=1,\dots,4$.
Moreover we already noticed that $\xi=0$.
Therefore, proposition \ref{prop::formulaNijenhuis} yields
\begin{equation}
|N|^2 =\frac{1}{4}.
\end{equation}
Substituting in \eqref{eq::s=scal+2snormN} then the scalar curvature of the homogeneous Riemannian metric associated to $J$ turns out to be $\scal = -1/2$.

\subsubsection{Symplectic two-step nilpotent Lie groups}\label{subsubsection::Symp2stnil}

The example presented above fits in the class of two-step nilpotent Lie group, and shares similar properties with any symplectic Lie group of that class.
Examples of this subsection have been previously investigated by Vezzoni in a more general framework \cite{Vezzoni2013}.

Let $(G,\omega)$ be a symplectic two-step nilpotent Lie group of dimension $2n$.
By definition, $G$ is non-abelian and one has $[[X,Y],Z]=0$ for all $X,Y,Z \in \mathfrak g$.
Any compatible almost complex structure $J$ on $(G,\omega)$ is special.
To see this, note first of all that $G$ is unimodular in that it is nilpotent.
Moreover, two-step nilpotency assumption implies that $\ad_{[X,Y]}=0$ for all $X,Y \in \mathfrak g$.
Therefore corollary \ref{cor::rhohomwithW} yields $\rho=0$, independently of the chosen compatible complex structure $H$ on $\mathfrak g$.
Similarly, two-step nilpotency simplifies the formula for the squared norm of the Nijenhuis tensor $N$ of $J$ given in proposition \ref{prop::formulaNijenhuis}.
In fact, both $\ad_{e_i}^2$ and $\ad_\xi$ vanish, whence
\begin{equation}
|N|^2 =\frac{1}{8} \sum_{i,j=1}^{2n} |[e_i,e_j]|^2,
\end{equation}
being ${e_i}$ a symplectic basis of $\mathfrak g$ such that $e_{i+n}=He_i$.
Therefore, none of the left-invariant compatible almost complex structures on $(G,\omega)$ is integrable and, in general, $|N|^2$ turns out to depend effectively on $H$.

\subsection{Co-adjoint orbits}\label{subsect::coadjointorbits}

Given a connected Lie group $G$, consider the co-adjoint action of $G$ on $\mathfrak g^*$ defined by
\begin{equation}
\Ad_g^*\theta = \theta \circ \Ad_g^{-1}
\end{equation}
for all $\theta \in \mathfrak g^*$.
Let $M$ be a co-adjoint orbit, that is the orbit of an element $\theta$ under this action. 
A key observation due to Kirillov, Kostant and Souriau is that any co-adjoint orbit carries a canonical symplectic form making it a homogeneous symplectic manifold.
In order to fit into the picture of section \ref{sec::homsympman}, henceforth we assume that $\theta \in \mathfrak g^*$ has isotropy group $K \subset G$ which contains no non-discrete normal subgroups of $G$.
In particular, $G$ is forced to have discrete center.
Thus the canonical symplectic form $\omega$ on the orbit $M=G/K$ is determined by the exact two-cocycle $\sigma$ of $\mathfrak g$ defined by
\begin{equation}\label{eq::defsigmacoadjointorbit}
\sigma(X,Y) = \theta([X,Y]_{\mathfrak g}).
\end{equation}

Co-adjoint orbits are a class of symplectic manifolds deeply investigated by many viewpoints ranging from quantum mechanics to representation theory \cite{Vogan2000}.
Their importance in the context of homogeneous special compatible almost complex structures stand on the remarkable fact that such structures having non-zero Hermitian scalar curvature may exist only on covering spaces of co-adjoint orbits. More precisely we have the following
\begin{thm}
Let $(M,\omega)$ be a symplectic manifold admitting a homogeneous compatible almost complex structure satisfying $\rho = \lambda \omega$ for some $\lambda \neq 0$.
Then $M$ is a covering space of a co-adjoint orbit and $\omega$ is the pull-back via the covering map of the canonical symplectic form.
\end{thm}
\begin{proof}
Let $J$ be a compatible almost complex structure on $(M,\omega)$ as in the statement.
Keep notation of section \ref{sec::homsympman}.
By evaluating at $x$ the equation $\rho=\lambda\omega$ and substituting formula \eqref{eq::rhoatx} yileds the identity
\begin{equation}\label{eq::lambdasigma=-diffzeta}
\zeta([X,Y]_{\mathfrak g}) = \lambda \sigma (X,Y).
\end{equation}
Let $\theta = \lambda^{-1} \zeta \in \mathfrak g^*$, and let $K' \subset G$ be the stabilizer of $\theta$ under the co-adjoint action of $G$.
Note that $K \subset K'$ for $\zeta$ is $K$-invariant.
On the other hand, $K'$ must have the same dimension of $K$.
If not, let $Y \in \mathfrak k' \setminus \mathfrak k$ be non-zero, and write $Y=Y_{\mathfrak m} + Y_{\mathfrak k}$ with $Y_{\mathfrak m} \neq 0$, so that $\sigma(X,Y_{\mathfrak m}) = 0$ for all $X \in \mathfrak m$, contradicting the fact that $\sigma$ is non-degenerate on $\mathfrak m$.
The upshot is that $K'/K$ is discrete, hence the obvious map from $G/K$ to $G/K'$ is a covering map from $M$ to the co-adjoint orbit of $\theta$.
Finally note that \eqref{eq::lambdasigma=-diffzeta} means that $\sigma$ equals the (Chevallay-Eilenberg) differential of $-\theta$, whence it is clear that the considered covering map pulls-back the canonical symplectic form of $G/K'$ to $\omega$. 
\end{proof}

It would be of some interest understanding if a sort of converse of theorem above holds true.
More precisely, one may expect that a co-adjoint orbit having the first Chern class equal to a non-zero multiple of the class of the canonical symplectic form admits a homogeneous special compatible almost complex structure.
The remainder of this section is devoted to show that this is the case under the additional assumptions that $G$ is semi-simple and the co-adjoint orbit has compact isotropy.
This will be enough to produce examples of co-adjoint orbits equipped with homogeneous special compatible almost complex structures with no restriction on the sign of the Hermitian scalar curvature, including the case $\lambda=0$. 
%

From now on let $(M,\omega)$ be a co-adjoint orbit as introduced at the beginning of this sub-section, and assume that $G$ is semi-simple and the isotropy $K$ is compact.
Semi-simplicity means that the Killing-Cartan form of $\mathfrak g$, that is the $G$-invariant symmetric bilinear form $B$ defined by
\begin{equation}
B(X,Y) = \tr(\ad_X \ad_Y),
\end{equation}
is non-degenerate.
A consequence of our assumption is that associated to $\theta$ there is a unique $V \in \mathfrak g$ such that $\theta(X) = B(V,X)$.
Therefore $K$ is the isotropy subgroup of $V$ with respect to the adjoint representation of $G$, and $\mathfrak k$ turns out to be the centralizer of $V$ inside $\mathfrak g$, that is the set of all $X \in \mathfrak g$ such that $[V,X]_{\mathfrak g}=0$.

\begin{lem}\label{lem::Kconnected}
$K$ is connected.
\end{lem}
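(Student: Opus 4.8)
The plan is to reduce the statement to the classical fact that the centralizer of a torus in a connected compact Lie group is connected. First I would record what the preceding discussion already gives: $K$ is the isotropy subgroup of $V$ under the adjoint representation, that is $K=\{g\in G:\Ad_gV=V\}$, and since $\mathfrak k$ is the centralizer of $V$ and $[V,V]_{\mathfrak g}=0$, one has $V\in\mathfrak k$.

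Next I would exploit the compactness of $K$ to place it inside a \emph{connected} compact group. By the Cartan--Iwasawa--Malcev theory, every compact subgroup of $G$ is contained in a maximal compact subgroup $U$, and $G$ is diffeomorphic to $U\times\mathbf R^m$; since $G$ is connected, the product decomposition forces $U$ to be connected as well. As $K\subset U$ we then have $V\in\mathfrak u$, and intersecting the identity $K=Z_G(V)$ with $U$ gives $K=U\cap Z_G(V)=Z_U(V)$.

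The key step is to replace the single element $V$ by the torus it generates. Since $\Ad_gV=V$ is equivalent to $g\,\exp(tV)\,g^{-1}=\exp(tV)$ for all $t$, the stabilizer of $V$ coincides with the centralizer of the one-parameter subgroup $\exp(\mathbf RV)$, hence with the centralizer of its closure $T=\overline{\exp(\mathbf RV)}$, which is a torus inside the compact group $U$. Thus $K=Z_U(T)$, and I would conclude by invoking the classical theorem that the centralizer of a torus in a connected compact Lie group is connected.

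The main obstacle I anticipate is justifying the reduction to a connected compact group cleanly: one must check that $U$ may be taken connected (this is exactly where connectedness of $G$ enters, through the decomposition $G\cong U\times\mathbf R^m$) and that passing from the pointwise stabilizer of $V$ to the centralizer of the torus $T$ does not enlarge the group. Once these two points are secured, the torus-centralizer theorem yields connectedness of $K$ directly, and the remaining verifications are routine.
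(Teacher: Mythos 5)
Your proposal is correct and takes essentially the same route as the paper: place $K$ inside a maximal compact subgroup, which is connected because $G$ is connected and retracts onto it (Cartan decomposition), identify $K$ with the centralizer of the torus $T=\overline{\exp(\mathbf R V)}$ in that connected compact group, and conclude by the classical (Hopf) theorem that centralizers of tori in connected compact Lie groups are connected. The only difference is that you spell out the routine verifications the paper leaves implicit, such as the equivalence between stabilizing $V$ under $\Ad$ and centralizing the closure $T$ of the one-parameter subgroup it generates.
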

\begin{proof}
Compactness of $K$ has two important consequences.
First, one can choose a maximal compact subgroup $K' \subset G$ which contains $K$.
Second, the closure $T \subset K$ of the one-parameter subgroup generated by $V$ is a torus.
On the other hand, a consequence of Cartan decomposition is that $G$ retracts to $K'$, whence $K'$ is connected for $G$ is.
At this point note that $K$ is equal to the centralizer of the torus $T$ inside the compact connected Lie group $K'$.
Therefore we can deduce that $K$ is connected by a well known result due to Hopf \cite{Serre1954}.
\end{proof}

Let $\mathfrak m \subset \mathfrak g$ be a $K$-invariant complement of $\mathfrak k$, so that we have the usual splitting $\mathfrak g = \mathfrak k \oplus \mathfrak m$.
Let $2n$ be the dimension of $\mathfrak m$.
We will denote by $\sigma$ both the two-form defined by \eqref{eq::defsigmacoadjointorbit} and its restriction to $\mathfrak m$.
We believe this will be not source of confusion, in that one has $\sigma(X,Y)=0$ for al $X\in \mathfrak k$ and $Y \in \mathfrak g$ thanks to the fact that $K$ is the isotropy subgroup of $\theta$.

By discussion above one has the identity $\sigma(X,Y) = B(V,[X,Y]_{\mathfrak g})$, and by $G$-invariance of the Killing-Cartan form $B$ one can also represent the symplectic form $\sigma$ on $\mathfrak m$ as
\begin{equation}\label{eq::sigmaintermsofB}
\sigma(X,Y) = B([V,X]_{\mathfrak m},Y),
\end{equation}
for all $X,Y \in \mathfrak m$.
Note that $\ad_V$ restricts to an invertible endomorphism of $\mathfrak m$, for $\mathfrak m$ is $K$-invariant and $\mathfrak k$ is the set of all elements of $\mathfrak g$ which commute with $V$.
A direct consequence of formula \eqref{eq::sigmaintermsofB} is that the restriction to $\mathfrak m$ of $B$ stays non-degenerate.

Now let $T \subset K$ be the torus generated by the center of $\mathfrak k$.
Since $\ad_V$ is invertible on $\mathfrak m$, there are no non-zero $T$-invariant elements in $\mathfrak m$.
Therefore, $\mathfrak m$ decomposes as
\begin{equation}
\mathfrak m = \mathfrak m_1 \oplus \dots \oplus \mathfrak m_n, 
\end{equation}
where $\mathfrak m_i$ is a two-dimensional $T$-invariant symplectic subspace of $\mathfrak m$, on which $B$ restricts to a definite symmetric bilinear form.
Set $\varepsilon_i=1$ if $B$ is positive on $\mathfrak m_i$, and $\varepsilon_i=-1$ otherwise.
Moreover, let $u_i,v_i$ be an orthogonal basis of $\mathfrak m_i$ normalized so that $B(u_i,u_i) = B(v_i,v_i) = \varepsilon_i$.
Since $\ad_V$ is skew-symmetric with respect to $B$, there exists a real $\lambda_i \neq 0$ such that
\begin{equation}
[V,u_i]_{\mathfrak g} = \lambda_i v_i, \qquad [V,v_i]_{\mathfrak g} = - \lambda_i u_i.
\end{equation}
Perhaps switching the role of $u_i$ and $v_i$, one can assume that $\lambda_i>0$.
Now it is immediate to check that $\varepsilon_i/\lambda_i \ad_V$ defines a compatible complex structure on $\mathfrak m_i$.
Hence the upshot is that letting
\begin{equation}\label{eq::defHadjointorbits}
HX = \sum_{i=1}^n \frac{\varepsilon_i}{\lambda_i} [V,X_i]_{\mathfrak g},
\end{equation}
where $X \in \mathfrak m$ and $X_i$ is the component of $X$ along $\mathfrak m_i$, defines a compatible $K$-invariant complex structure on $\mathfrak m$.
Moreover note that defining $e_i = (1/\sqrt{\lambda_i}) u_i$, and $e_{i+n} = (\varepsilon_i/\sqrt{\lambda_i})v_i$ gives a symplectic basis of $\mathfrak m$ such that $e_{i+n}=He_i$.

By theorem \ref{thm::constructionhomogeneousalmostkahler} then $H$ induces on $M$ an almost-complex structure $J$ compatible with $\omega$.
Moreover, by corollary \ref{cor::rhohomwithW}, the Chern-Ricci form $\rho$ of $J$ is determined by the linear one-form $\zeta \in \mathfrak g^*$ defined by $\zeta(X) = \tr(\ad_{HX}-H\ad_X)$.
Non-degereacy and $G$-invariance of the Killing-Cartan form $B$ readily imply that $G$ is unimodular, hence $\tr(\ad_{HX})$ vanishes.
Moreover, thanks to \eqref{eq::defHadjointorbits} and the basis introduced above one has
\begin{equation}
\zeta(X) = - \sum_{i=1}^n \sigma([X,e_i]_{\mathfrak m},e_i) + \sigma([X,He_i]_{\mathfrak m},He_i)
\end{equation}
for all $X\in \mathfrak g$.
Substituting \eqref{eq::sigmaintermsofB} and recalling that $B$ is $G$-invariant yields
\begin{equation}
\zeta(X) = - \sum_{i=1}^n B(X,[[V,e_i]_{\mathfrak m},e_i]_{\mathfrak g}) + B(X,[[V,He_i]_{\mathfrak m},He_i]_{\mathfrak g}),
\end{equation}
whence, by the expression of $e_i,He_i$ in terms of $u_i,v_i$ one finally gets
\begin{equation}
\zeta(X) = B(V',X),
\end{equation}
being $V'= 2\sum_{i=1}^n [u_i,v_i]_{\mathfrak g}$.
Note that $V'$ belongs to the center of $\mathfrak k$, since $\zeta$ is $K$-invariant and $B$ is $G$-invariant and non-degenerate.

At this point, note that in case $V' = \lambda V$ for some real $\lambda$, one has $\zeta = \lambda \theta$ which in turn implies $\rho = \lambda \omega$, so that $J$ is a special compatible almost complex structure on $(M,\omega)$.
Conversely, if $J$ satisfies $\rho = \lambda \omega$, then semi-semplicity of $\mathfrak g$ yields $V'=\lambda V$.
To see this, note that evaluating at $x$ the condition $\rho = \lambda \omega$ yields $\zeta([X,Y]_{\mathfrak g}) = \lambda \sigma (X,Y)$ for all $X,Y \in \mathfrak m$, whence
\begin{equation}\label{eq::specialequivalentequation}
B(V'-\lambda V,[X,Y]_{\mathfrak g}) = 0.
\end{equation}
Now, semi-simplicity of $\mathfrak g$ implies $[\mathfrak g,\mathfrak g] = \mathfrak g$, whence it follows readily that any element belonging to the center of $\mathfrak k$ is of the form $[X,Y]_{\mathfrak g}$ with $X,Y \in \mathfrak m$.
In particular, \eqref{eq::specialequivalentequation} implies $B(V'-\lambda V,V'-\lambda V) = 0$.
Since $K$ is compact and $\mathfrak g$ has trivial center, one can show that $B$ is negative definite on $\mathfrak k$, whence $V'=\lambda V$, as claimed.

Finally we show that $V,V'$ are linearly dependent whenever $[\omega],c_1 \in H_{dR}^2(M)$ are.
To this end, let $\mathfrak z$ be the center of $\mathfrak k$ and consider the linear map 
\begin{equation}
\varphi: \mathfrak z \to \Omega^2(M)
\end{equation}
which transforms $Z$ to the $G$-invariant two-form $\varphi(Z)$ on $M$ defined by $\varphi(Z)(X,Y)_x = B(Z,[X,Y]_{\mathfrak g})$ for all $X,Y \in \mathfrak m$.
Note that by Jacobi identity, $B(Z,[X,Y]_{\mathfrak g})$ defines a closed two form on $\mathfrak g$.
Therefore $\varphi(Z)$ is closed, and one associates $Z$ with a two-cohomology class on $M$.
\begin{lem}\label{lem::injcohomology}
The linear map $\mathfrak z \to H_{dR}^2(M)$ induced by $\varphi$ is injective.
\end{lem}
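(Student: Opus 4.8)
The plan is to trade the non-compact manifold $M$ for a compact model on which invariant-form techniques apply. First I would reuse the Cartan decomposition from the proof of Lemma~\ref{lem::Kconnected}: fixing a maximal compact subgroup $K'\subset G$ with $K\subseteq K'$, the space $G/K'$ is diffeomorphic to a Euclidean space, so the projection $M=G/K\to G/K'$ is a fibre bundle with compact fibre the flag manifold $F=K'/K$. Since the base is contractible, the inclusion $i\colon F\hookrightarrow M$ of a single fibre is a homotopy equivalence, whence $i^*\colon H_{dR}^2(M)\to H_{dR}^2(F)$ is an isomorphism. It therefore suffices to show that $Z\mapsto[i^*\varphi(Z)]$ is injective on $\mathfrak z$.

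Next I would compute the cohomology of the compact fibre through invariant forms. Writing $\mathfrak k'=\mathfrak k\oplus\mathfrak p$ with $\mathfrak p$ a $K$-invariant complement, $\mathfrak p$ is the tangent space to $F$ at the base point, and $i^*\varphi(Z)$ is the $K'$-invariant two-form whose value there is $(X,Y)\mapsto B(Z,[X,Y]_{\mathfrak g})$ on $\mathfrak p\times\mathfrak p$. Because $K'$ is compact and connected, an averaging argument shows that the inclusion of $K'$-invariant forms into $\Omega^\bullet(F)$ is a quasi-isomorphism, so $H_{dR}^2(F)$ is computed by the complex $(\Lambda^\bullet\mathfrak p^*)^K$. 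The crucial simplification is that $K$ is the centralizer of the torus $T$ generated by $\mathfrak z$, so $T$ acts on $\mathfrak p$ with no nonzero fixed vectors and $(\mathfrak p^*)^K=0$; hence $F$ carries no invariant one-forms. Consequently a closed invariant two-form is de Rham exact only if it vanishes identically, and $[i^*\varphi(Z)]=0$ forces $B(Z,[X,Y]_{\mathfrak g})=0$ for all $X,Y\in\mathfrak p$.

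It then remains to deduce $Z=0$ from $B(Z,[\mathfrak p,\mathfrak p]_{\mathfrak g})=0$, and this is the step I expect to require the most care. Decomposing $\mathfrak p$ into the two-dimensional weight spaces of $T$ and using that $B$ is negative definite on $\mathfrak k'$, one wants the pairing $\mathfrak z\times[\mathfrak p,\mathfrak p]_{\mathfrak g}\to\mathbf R$, $(Z,W)\mapsto B(Z,W)$, to have trivial left kernel, since each central direction of $\mathfrak k$ should be detected by a bracket of paired weight vectors; granting this, the vanishing above gives $Z=0$. The delicate point is that a hypothetical element of $\mathfrak z$ lying in the center of the \emph{whole} maximal compact $\mathfrak k'$ would pair trivially with $[\mathfrak p,\mathfrak p]_{\mathfrak g}$, so the argument must genuinely use the root data of $\mathfrak g$ to see that $\mathfrak z$ injects into $H_{dR}^2(F)$, rather than treating this as automatic; this is where I would check the structure of $\mathfrak k'$ explicitly.
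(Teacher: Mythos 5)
You take a genuinely different route from the paper, and through its first two steps your argument is correct and in fact sharper than the paper's. The paper instead pulls forms back to $G$: it shows $\pi^*\varphi(Z)$ is exact on $G$ and then tries to construct a left inverse $\ker\left(H_{dR}^2(M)\to H_{dR}^2(G)\right) \to H_{dR}^1(K)\cong \mathfrak z$ by restricting to $K$ a primitive $\lambda$ of the pulled-back form. Your reduction to the compact fibre $F=K'/K$ is legitimate ($G/K'$ is contractible by the Cartan decomposition, so $i^*$ is an isomorphism; $K'$ and $K$ are connected, the latter by lemma \ref{lem::Kconnected}, so averaging computes $H^2_{dR}(F)$ from invariant forms), and choosing $\mathfrak m=\mathfrak k^{\perp}$ with respect to $B$ makes $\mathfrak p\subset\mathfrak m$, so that $i^*\varphi(Z)$ is exactly the invariant form you describe. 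Since $(\mathfrak p^*)^K\subseteq(\mathfrak p^*)^T=0$, you correctly conclude that $[\varphi(Z)]=0$ forces $B(Z,[X,Y]_{\mathfrak g})=0$ for all $X,Y\in\mathfrak p$.

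The last step, which you honestly flag, is a genuine gap --- and it cannot be closed. Since $B$ is negative definite on $\mathfrak k'$ and $[Z,\mathfrak p]\subseteq\mathfrak p$, the condition $B(Z,[\mathfrak p,\mathfrak p]_{\mathfrak g})=B([Z,\mathfrak p],\mathfrak p)=0$ is equivalent to $[Z,\mathfrak p]=0$, i.e.\ to $Z$ lying in the center $\mathfrak z(\mathfrak k')$ of $\mathfrak k'$ (which automatically sits inside $\mathfrak z$, since it centralizes $V$). So the kernel of the map in the lemma is precisely $\mathfrak z(\mathfrak k')$, and the hypotheses do not force this to vanish: for $G=PSL(2,\mathbf R)$ and $M$ an elliptic coadjoint orbit --- the hyperbolic plane --- all hypotheses hold, $\mathfrak z\cong\mathbf R$, yet $H_{dR}^2(M)=0$, so the statement as given is actually false and no root-theoretic refinement will rescue your step. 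Your computation in fact exposes the corresponding unnoticed gap in the paper's own proof: the primitive $\lambda$ there is determined only up to a \emph{closed} one-form on $G$, so the class of $\lambda'$ in $H_{dR}^1(K)$ is well defined only when the restriction map $H_{dR}^1(G)\to H_{dR}^1(K)$ vanishes, which (as $G$ retracts onto $K'$) happens precisely when $\mathfrak z(\mathfrak k')=0$ --- the same condition your argument needs. Under that extra hypothesis your proof is complete, and cleaner than the paper's; the condition does hold in the applications where the lemma carries real content ($\mathfrak k'=\mathfrak{so}(2n)$ with $n\geq 2$, and $\mathfrak{so}(2p)\oplus\mathfrak{so}(q)$ with $p>1$, $q\neq 2$), while in the degenerate cases such as the hyperbolic plane one has $\dim\mathfrak z=1$, so the conclusion $V'=\lambda V$ needed for theorem \ref{thm::existencespecialsemisimple} holds there trivially without the lemma.
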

\begin{proof}
Let $\pi$ be the projection from $G$ to $M$, and consider the induced pull-back map $P$ form $H_{dR}^2(M)$ to $H_{dR}^2(G)$.
We will prove the statement by showing that $\varphi$ induces an isomorphism between $\mathfrak z$ and the kernel of $P$.
First of all, note that $P[\varphi(Z)]=0$ for all $Z \in \mathfrak z$.
This follows from the fact that $P[\varphi(Z)] \in H_{dR}^2(G)$ is represented by the left-invariant differential form on $G$ defined by $\beta(Z)(X,Y)_e = B(Z,[X,Y]_{\mathfrak g})$ for all $X,Y \in \mathfrak g$, which is exact.

On the other hand, let $\gamma$ be a closed two-form on $M$ such that $P[\gamma]=0$.
The pull-back $\pi^*\gamma$ to $G$ is equal to $d\lambda$ for some one-form $\lambda \in \Omega^1(G)$.
Let $\lambda' \in \Omega^1(M)$ be the restriction of $\lambda$ to $K$.
Note that pulling back via $\pi$ a differential form on $M$ and restricting to $K$ gives the zero form.
Indeed this is the same as restricting to $\{x\}$, which has dimension zero, and pulling back via $\pi$ to $K$.
As a consequence, $\lambda'$ is closed, for $d\lambda'$ is equal to the restriction to $K$ of the pull-back $\pi^*\gamma$.
Moreover, note that replacing $\lambda$ with a cohomologous one-form on $G$ has the effect to adding $\lambda'$ with a closed one-form on $K$.
Similarly, replacing $\gamma$ with a cohomologous two-form on $M$ does not change the cohomology class of $\lambda'$.
The upshot is that we defined a map form $\ker P$ to $H_{dR}^1(K)$.

Since we assumed $K$ to be compact and we shown by lemma \ref{lem::Kconnected} that $K$ is connected, by standard averaging argument and by the structure of compact Lie groups it follows that taking the restriction to the identity element of an invariant representative of a class $c \in H_{dR}^1(K)$ induces an isomorphism form $H_{dR}^1(K)$ to $\mathfrak z$.
At this point it is an easy exercise to show that such map is a right-inverse for the map in the statement, which henceforth has to be injective.
\end{proof}

Now come back to the problem of showing that $V,V'$ are linearly dependent if $[\omega],c_1$ are.
First of all note that by means of the map $\varphi$ introduced above one has $\rho = \varphi(V')$, and $\omega = \varphi(V)$. 
Now suppose that $4\pi c_1=\lambda [\omega]$ for some real $\lambda$.
Since $\rho$ represents $4\pi c_1$, it follows that
\begin{equation}
[\varphi(V'-\lambda V)] = 0 \in H_{dR}^2(M),
\end{equation}
whence $V'=\lambda V$ by lemma \ref{lem::injcohomology}.
Summarizing we proved the following

\begin{thm}\label{thm::existencespecialsemisimple}
Let $G$ be a connected semi-simple Lie group, and let $M \subset \mathfrak g^*$ be a co-adjoint orbit equipped with the canonical symplectic form $\omega$.
Assume that the isotropy of $M$ is compact and contains no non-discrete normal subgroups of $G$.
If the first Chern class of $(M,\omega)$ satisfies $4\pi c_1=\lambda [\omega]$ for some $\lambda \in \mathbf R$, then there exists a homogeneous special almost complex structure on $(M,\omega)$.
\end{thm}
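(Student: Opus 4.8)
The plan is to prove the theorem constructively, exhibiting an explicit homogeneous compatible almost complex structure $J$ on $M=G/K$ and then showing that the cohomological hypothesis forces it to be special. The key algebraic input is that, identifying $\theta$ with the element $V \in \mathfrak g$ determined by $\theta = B(V,\cdot)$, the symplectic form is given by \eqref{eq::sigmaintermsofB}, namely $\sigma(X,Y) = B([V,X]_{\mathfrak m},Y)$. This already singles out $\ad_V$ as the natural candidate for a compatible complex structure: it is skew-symmetric with respect to $B$, and since $\mathfrak k$ is precisely the centralizer of $V$, it restricts to an invertible endomorphism of $\mathfrak m$. Note also that $V$ lies in the center $\mathfrak z$ of $\mathfrak k$, since $V \in \mathfrak k$ commutes with all of $\mathfrak k$.

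First I would build $H$ from $\ad_V$. Because $B$ need not be definite on all of $\mathfrak m$, I would not normalize $\ad_V$ globally but decompose $\mathfrak m = \mathfrak m_1 \oplus \dots \oplus \mathfrak m_n$ into the two-dimensional $T$-invariant symplectic blocks on which $B$ is definite with sign $\varepsilon_i$, and set $H = \varepsilon_i \lambda_i^{-1}\ad_V$ on $\mathfrak m_i$ as in \eqref{eq::defHadjointorbits}. A block-by-block check gives $-H^2 = \id$ together with $\sigma$-compatibility and $K$-invariance, so theorem \ref{thm::constructionhomogeneousalmostkahler} upgrades $H$ to a homogeneous compatible $J$ on $M$. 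Working with the adapted symplectic basis $e_i$, $e_{i+n} = He_i$ obtained from the $u_i, v_i$ keeps the ensuing trace computations tractable.

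Next I would compute the Chern-Ricci form and extract the specialness condition. By corollary \ref{cor::rhohomwithW}, $\rho$ is encoded by $\zeta(X) = \tr(\ad_{HX} - H\ad_X)$; semi-simplicity makes $G$ unimodular so the term $\tr \ad_{HX}$ drops out, and evaluating the remaining trace against the adapted basis and using $G$-invariance of $B$ yields $\zeta = B(V',\cdot)$ with $V' = 2\sum_{i=1}^n [u_i,v_i]_{\mathfrak g}$. Since $\zeta$ is $K$-invariant and $B$ is non-degenerate and $G$-invariant, $V'$ lies in $\mathfrak z$. Because $\rho$ is determined by $\zeta$ exactly as $\omega$ is determined by the cocycle $\sigma(X,Y)=\theta([X,Y]_{\mathfrak g})$, the identity $\zeta = \lambda\theta$, i.e. $V' = \lambda V$, is equivalent to $J$ being special with $\rho = \lambda\omega$.

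The hard part will be deducing $V' = \lambda V$ from the topological hypothesis $4\pi c_1 = \lambda[\omega]$. Through the map $\varphi : \mathfrak z \to \Omega^2(M)$ one has $\omega = \varphi(V)$ and $\rho = \varphi(V')$, and since $\rho$ represents $4\pi c_1$ the hypothesis gives $[\varphi(V' - \lambda V)] = 0$ in $H_{dR}^2(M)$. Everything then reduces to the injectivity of the induced map $\mathfrak z \to H_{dR}^2(M)$, which I would isolate as lemma \ref{lem::injcohomology} and regard as the genuine obstacle. Its proof is the delicate step: using that $K$ is compact and, by lemma \ref{lem::Kconnected}, connected, one identifies the kernel of the pullback $H_{dR}^2(M) \to H_{dR}^2(G)$ with $\mathfrak z$ by restricting primitives of pulled-back forms to $K$ and invoking the structure of $H_{dR}^1(K)$. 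Granting this injectivity, $V' = \lambda V$ follows immediately, whence $\zeta = \lambda\theta$ and $\rho = \lambda\omega$, so $J$ is special and the theorem is proved.
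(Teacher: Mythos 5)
Your proposal is correct and follows essentially the same route as the paper's own proof: the same block decomposition of $\mathfrak m$ into two-dimensional $T$-invariant pieces defining $H$ as in \eqref{eq::defHadjointorbits}, the same trace computation yielding $\zeta = B(V',\cdot)$ with $V' = 2\sum_{i=1}^n [u_i,v_i]_{\mathfrak g} \in \mathfrak z$, and the same reduction of the specialness condition $V' = \lambda V$ to the injectivity of $\varphi$ in lemma \ref{lem::injcohomology}, proved via the kernel of the pullback to $G$ and $H_{dR}^1(K) \cong \mathfrak z$. You have correctly identified the injectivity lemma as the genuinely delicate step, exactly as the paper structures its argument.
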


\subsubsection{The twistor space of the real hyperbolic 2n-space}\label{subsubsect::twistorHspace}

As is well known, the hyperbolic space $H^{2n}$ can be realized as the homogeneous Riemannian space $SO(2n,1)/SO(2n)$.
More specifically, one considers on $\mathbf R^{2n+1}$ the quadratic form
\begin{equation}
q(v) = v_1^2 + \dots + v_{2n}^2-v_{2n+1}^2,
\end{equation}
so that $SO(2n,1) \subset GL(2n+1,\mathbf R)$ is the identity component of the group constituted by all transformations that preserve $q$.
Moreover, $SO(2n) \subset SO(2n,1)$ is the isotropy subgroup of the point $(0,\dots,0,1) \in \mathbf R^{2n+1}$.
The Lie algebra $\mathfrak g$ of $SO(2n,1)$ turns out to be the subalgebra of $\mathfrak {gl}(2n+1,\mathbf R)$ constituted by matrices of the form
\begin{equation}\label{eq::generalXofso0(2n,1)}
X= \left(
\begin{array}{cc}
A & u \\
u^t & 0
\end{array}
\right)
\end{equation}
where $u \in \mathbf R^{2n}$ and $A \in \mathfrak o(2n)$.
Thanks to the general theory of homogeneous Riemannian manifolds, considering the resulting (Cartan) decomposition $\mathfrak g = \mathbf R^{2n} \oplus \mathfrak o(2n)$ and choosing the standard scalar product on $\mathbf R^{2n}$ defines a homogeneous Riemannian metric on $H^{2n}$ which turns out to have constant sectional curvature equal to $-1$.

After noting that the standard orientation of $\mathbf R^{2n}$ induces an orientation on $H^{2n}$, consider the twistor space of $H^{2n}$, that is the space of all orthogonal orientation-preserving complex structures on $T_xH^{2n}$ as $x$ varies in $H^{2n}$. Since the isotropy action of $SO(2n)$ on $T_xH^{2n}$ is the standard one, $SO(2n,1)$ turns out to act transitively on the twistor space of $H^{2n}$ with isotropy $U(n)$.
Therefore the twistor space of $H^{2n}$ is the homogeneous space $SO(2n,1)/U(n)$.  
We will recall below that this space can be realized as a (co)adjoint orbit of $SO(2n,1)$.
As a consequence, it fits in the general theory developed above, and we will show that it admits a compatible almost complex structure whose Chern-Ricci form satisfies $\rho = (2n-4)\omega$.

To see this, start by considering $Y \in \mathfrak g$ which depends on $v \in \mathbf R^{2n}$ and $B \in \mathfrak o(2n)$, similarly to $X$ as in \eqref{eq::generalXofso0(2n,1)}.
By direct calculation one can check that the Killing-Cartan form of $\mathfrak g$ is given by $(4n-2)B$, being
\begin{equation}\label{eq::Killingso0(2n,1)}
B(X,Y)=\langle u,v \rangle+\frac{1}{2}\tr(AB).
\end{equation}
The reason for working with the resclared Killing-Cartan form $B$ is to avoiding some unpleasant normalizing factors. 
Since $B$ is non-degenerate, one deduces by Cartan criterion that $\mathfrak g$ is semi-simple.
Now let $\theta \in \mathfrak g^*$ be the one-form defined by
\begin{equation}
\theta(X) = B(V,X),
\end{equation}
where $V$ is the element of $\mathfrak g$ defined by
\begin{equation}
V= \left(
\begin{array}{cc}
J_0 & 0 \\
0 & 0
\end{array}
\right),
\end{equation}
and $J_0 \in \mathfrak o(2n)$ is the matrix representing the standard complex structure of $\mathbf R^{2n}$.
The isotropy subgroup $K \subset G$ of $\theta$ coincides with the isotropy subgroup of $V$ and it is isomorphic to the unitary group $U(n)$.
In particular the Lie subalgebra $\mathfrak k \subset \mathfrak g$ is constituted by all $X \in \mathfrak g$ such that $[V,X]=0$.
More concretely, $X$ as in \eqref{eq::generalXofso0(2n,1)} belongs to $\mathfrak k$ if and only if $u=0$ and $A\in\mathfrak o(2n)$ satisfies $[A,J_0]=0$.
Therefore, a $K$-invariant complement $\mathfrak m \subset \mathfrak g$ of $\mathfrak k$ is constituted by all $X$ of the form as in \eqref{eq::generalXofso0(2n,1)} with $A\in\mathfrak o(2n)$ satisfying $[A,J_0] = 2AJ_0$.

By general theory, the co-adjoint orbit of $\theta$ is then the coset space $M=SO(2n,1)/U(n)$ and by discussion above it coincides with the twistor space of $H^{2n}$.
The dimension of $M$ is $n(n+1)$, and the canonical symplectic form $\omega$ on $M$ is determined by the linear two-form $\sigma$ on $\mathfrak g$ defined by
\begin{equation}
\sigma(X,Y)
= \langle J_0u,v \rangle + \tr(J_0AB).
\end{equation}
The remainder of this section is devoted to show that $(M,\omega)$ admits a special compatible almost complex structure $J$.
To this end, consider the splitting 
\begin{equation}\label{eq::splittingm=m'+m''}
\mathfrak m = \mathfrak m^+ \oplus \mathfrak m^-
\end{equation} where, referring to \eqref{eq::generalXofso0(2n,1)}, $\mathfrak m^+$ is the set of $X \in \mathfrak m$ having $A=0$, and $\mathfrak m^-$ is the set of $X \in \mathfrak m$ such that $u=0$.
Hence, it follows directly by \eqref{eq::Killingso0(2n,1)} that $B$ restricts to a positive definite scalar product on $\mathfrak m^+$ and to a negative definite scalar product on $\mathfrak m^-$.
Since $B$ is $G$-invariant, in particular the one-dimensional torus $T \subset K$ generated by $V$ preserves the splitting \eqref{eq::splittingm=m'+m''}.
Moreover, a $B$-orthonormal basis of $\mathfrak m^+$ is given by
\begin{equation}\label{eq::orthonormalbasisX'}
E_i = \left( \begin{array}{cc} 0 & e_i \\ e_i^t & 0 \end{array} \right),
\end{equation}
where $e_1,\dots,e_{2n}$ is the standard basis of $\mathbf R^{2n}$, and $i$ runs from $1$ to $2n$.
Whereas, a $B$-orthogonal basis of $\mathfrak m^-$ is given by 
\begin{equation}\label{eq::orthogonalbasisX''}
\begin{split}
P_{ij} = \frac{1}{\sqrt{2}}\left([E_i,E_j]-[E_{i+n},E_{j+n}]\right), \\
Q_{ij} = \frac{1}{\sqrt{2}}\left([E_i,E_{j+n}]+[E_{i+n},E_j]\right)
\end{split}
\end{equation}
for all $1\leq i<j \leq n$.
Note that the factor $1/\sqrt{2}$ is placed there in order to making $B(P_{ij},P_{ij})$ and $B(Q_{ij},Q_{ij})$ both equal to $-1$.
The action of $\ad_V$ on $\mathfrak m$ can be conveniently described in the bases just introduced.
In fact, it is easy to check that $[V,E_i] = E_{i+n}$ for all $i \leq n$, 
and $[V,P_{ij}] = 2Q_{ij}$ for all $1\leq i<j \leq n$.
The upshot is that letting $u_i = E_i$, $v_i=E_{i+n}$, and $u_{ij} = P_{ij}$, $v_{ij} = Q_{ij}$ places us in the general situation discussed above (apart from the unimportant fact that some basis elements are indexed by two indices instead of one) with $\lambda_i=1$ and $\lambda_{ij} = 2$.
Therefore the compatible complex structure $J$ we are looking for on $M$ is determined by the linear complex structure $H$ on $\mathfrak m$ defined by $ HX = [V,X^+] - \frac{1}{2}[V,X^-]$,
where $X^+$ and $X^-$ are the component of $X$ according to the splitting \eqref{eq::splittingm=m'+m''}.
Referring again to \eqref{eq::generalXofso0(2n,1)}, then $H$ is given by
\begin{equation}
HX = \left( \begin{array}{cc} -\frac{1}{2}[J_0,A] & J_0u \\ (J_0u)^t & 0 \end{array} \right).
\end{equation}

At this point, showing that $J$ is special is an easy calculation.
Indeed, by general theory we know that $\rho=\lambda \omega$ if and only if $\zeta = \lambda \theta$, where $\zeta \in \mathfrak g^*$ is the one-form defined by $\zeta(X)=B(V',X)$, being $V' \in \mathfrak k$ given by 
\begin{equation}
V' = \sum_{i=1}^n 2[E_i,E_{i+n}] + \sum_{1 \leq i < j \leq n} 2[P_{ij},Q_{ij}].
\end{equation}
By elementary calculations one finds $V'=(2n-4)V$
whence it follows that $J$ satisfies 
\begin{equation}
\rho = (2n-4) \omega.
\end{equation}
As a consequence the Hermitian scalar curvature of $J$ is given by $s=n(n+1)(n-2)$, and that the first Chern class of $(M,\omega)$ satisfies $2\pi c_1 = (n-2)[\omega]$.
In particular, $(M,\omega)$ turns out to be symplectic Calabi-Yau when $n=2$ and symplectic Fano for all $n>2$.

Finally, we calculate the squared norm of the Nijenhuis tensor of $J$.
To this end, note that by observation after formula \eqref{eq::defHadjointorbits} it follows that a symplectic and orthonormal basis of $\mathfrak m$ is given by $E_i, \frac{1}{\sqrt{2}}P_{ij}, E_{i+n}, -\frac{1}{\sqrt{2}}Q_{ij}$.
In view of applying proposition \ref{prop::formulaNijenhuis}, note that the commutators of basis elements having non-zero component along $\mathfrak m$ are the following
\begin{equation}
\begin{split}
[E_i,E_j]_{\mathfrak m} = - [E_{i+n},E_{j+n}]_{\mathfrak m} = \frac{1}{\sqrt{2}} P_{ij}, \\
[E_i,E_{j+n}]_{\mathfrak m} = [E_{i+n},E_j]_{\mathfrak m} = \frac{1}{\sqrt{2}} Q_{ij}, \\
[P_{ij},E_j] = [Q_{ij},E_{j+n}] = \frac{1}{\sqrt{2}} E_i, \\
[P_{ij},E_i] = [Q_{ij},E_{i+n}] = - \frac{1}{\sqrt{2}} E_j \\
[P_{ij},E_{j+n}] = -[Q_{ij},E_j] = -\frac{1}{\sqrt{2}} E_{i+n}, \\
[P_{ij},E_{i+n}] = -[Q_{ij},E_i] = \frac{1}{\sqrt{2}} E_{j+n},
\end{split}
\end{equation}
where $1 \leq i<j \leq n$.
Moreover note that the second summand of the formula for $|N|^2$ can be expressed in term of the Killing-Cartan form $(4n-2)B$, and finally that the third summand of the same formula is given by $s/2$ since $G$ is unimodular.
Putting all together then proposition \ref{prop::formulaNijenhuis} yields
\begin{equation}
|N|^2 = 3n(n-1).
\end{equation}

\subsubsection{Griffiths period domains of weight two}\label{subsubsection::gentypesymp}

The twistor space of the hyperbolic space $H^{2n}$ described in the previous section is an example of symplectic manifold $(M,\omega)$ that carry a special compatible almost complex structure $J$.
In particular, the first Chern class of such space satisfies $2 \pi c_1 = (n-2) [\omega]$.
Therefore $(M,\omega)$ is of general type only in case $n=1$, when $M$ is the hyperbolic plane and $\omega$ is the K\"ahler form of the hyperbolic metric.
In order to produce examples of symplectic manifolds admitting a non-integrable special compatible almost complex structure with negative Hermitian curvature here we consider a slight generalization of the situation considered above. 
The resulting symplectic manifold $(M,\omega)$ will be a homogeneous fiber bundle on the irreducible non-positively curved symmetric space $SO(2p,q)/(SO(2p)\times SO(q))$, being the latter the space of $2p$-dimensional positive subspaces of a $(2p+q)$-dimensional vector space endowed with a quadratic form of signature $(2p,q)$. 
It is worth to note that $M$ turns out to admit a complex structure, which makes it a Griffiths period domain of weight two \cite[Section 8]{Griffiths1970}.

Given integers $p,q>0$, consider the Lie group $SO(2p,q)$ constituted of all endomorphisms of $\mathbf R^{2p+q}$ that preserves a scalar product of signature $(2p,q)$ and are path connected to the identity.
The Lie algebra $\mathfrak g$ of $SO(2p,q)$ turns out to be the subalgebra of $\mathfrak {gl}(2p+q,\mathbf R)$ constituted by matrices of the form
\begin{equation}\label{eq::generalXofso0(2p,q)}
X= \left(
\begin{array}{cc}
A_{2p} & u \\
u^t & A_q
\end{array}
\right)
\end{equation}
where now $u$ is a matrix belonging to $\mathbf R^{2p \times q}$, $A_{2p} \in \mathfrak o(2p)$, and $A_q \in \mathfrak o(q)$.

Similar to $X$ above, consider $Y \in \mathfrak g$ depending on $v \in \mathbf R^{2p \times q}$, $B_{2p} \in \mathfrak o(2p)$, and $B_q \in \mathfrak o(q)$.
The Killing-Cartan form of $\mathfrak g$ turns out to be $(4p+2q-4)B$, where $B$ is the symmetric bilinear form defined by
\begin{equation}\label{eq::Killingso0(2p,q)}
B(X,Y)=\langle u,v \rangle+\frac{1}{2}\tr(A_{2p}B_{2p}) + \frac{1}{2}\tr(A_qB_q).
\end{equation}
As above, we work with the resclared Killing-Cartan form $B$ in order to get simpler formulae. 
Even in this more general case, $B$ is non-degenerate, whence one deduces by Cartan criterion that $\mathfrak g$ is semi-simple.

Let $\theta \in \mathfrak g^*$ be the one-form defined by $\theta(X) = B(V,X)$, where $V \in \mathfrak g$ is defined by
\begin{equation}
V= \left(
\begin{array}{cc}
J_0 & 0 \\
0 & 0
\end{array}
\right),
\end{equation}
and $J_0 \in \mathfrak o(2p)$ is the matrix representing the standard complex structure of $\mathbf R^{2p}$.
The isotropy subgroup $K \subset G$ of $\theta$ is the isotropy subgroup of $V$ and it is isomorphic to the direct product $SO(q) \times U(p)$.
The Lie subalgebra $\mathfrak k \subset \mathfrak g$ is constituted by all $X \in \mathfrak g$ such that $[V,X]=0$, that is, all $X$ as in \eqref{eq::generalXofso0(2p,q)} such that $u=0$, and $A_{2p}\in\mathfrak o(2p)$ satisfies $[A_{2p},J_0]=0$.
Hence, a $K$-invariant complement $\mathfrak m \subset \mathfrak g$ of $\mathfrak k$ is constituted by all $X$ with $A_q=0$, and $A+{2p}\in\mathfrak o(2p)$ satisfying $[A_{2p},J_0] = 2A_{2p}J_0$.

The upshot is that the co-adjoint orbit of $\theta$ is the coset space $M=SO(2p,q)/(U(p) \times SO(2q))$, the dimension of $M$ is $p(p+2q-1)$, and the canonical symplectic form $\omega$ on $M$ is determined by the linear two-form $\sigma$ on $\mathfrak g$ defined by
\begin{equation}
\sigma(X,Y)
= \langle J_0u,v \rangle + \tr(J_0A_{2p}B_{2p}).
\end{equation}
Similarly to the situation discussed in the previous section, in order to describe a special compatible almost complex structure $J$ on $(M,\omega)$, is is worth to consider the splitting
\begin{equation}\label{eq::splittingm=m'+m''2p+q}
\mathfrak m = \mathfrak m^+ \oplus \mathfrak m^-,
\end{equation} 
so that $B$ is positive on $\mathfrak m^+$ and negative in $\mathfrak m^-$.
More concretely, $\mathfrak m^+$ turns out to be the set of $X \in \mathfrak g$ with $A_{2p}=0$, $A_q=0$, and $\mathfrak m^-$ the set of $X \in \mathfrak g$ such that $u=0$, $A_q=0$, $A_{2p}J_0+J_0A_{2p}=0$.

Arguing as in previous sub-section with obvious adaptations yields a complex structure $H$ on $\mathfrak m$ defined by $HX = [V,X^+] - \frac{1}{2}[V,X^-]$, being $X^+$ and $X^-$ the components of $X$ according to the splitting \eqref{eq::splittingm=m'+m''2p+q}.
Writing $X$ as in \eqref{eq::generalXofso0(2p,q)}, then $H$ is given by
\begin{equation}
HX = \left( \begin{array}{cc} -\frac{1}{2}[J_0,A_{2p}] & J_0u \\ (J_0u)^t & 0 \end{array} \right).
\end{equation}
The compatible almost complex structure $J$ induced by $H$ on $M$ is special.
This follows by arguments along the lines of sub-section above.
The upshot is that the Chern-Ricci form $\rho$ of $J$ satisfies
\begin{equation}
\rho = (2p-2q-2) \omega.
\end{equation}
As a consequence the Hermitian scalar curvature of $J$ is given by $s=p(p+2q-1)(p-q-1)$,
and the first Chern class of $(M,\omega)$ satisfies
\begin{equation}
2\pi c_1 = (p-q-1)[\omega].
\end{equation}
Finally, one calculates that 
\begin{equation}
|N|^2 = 3pq(p-1).
\end{equation}

We note that $M$ can be thought of as the twistor space of a tangent sub-bundle $E$ of the non-compact symmetric space $SO(2p,q)/(SO(2p)\times SO(q))$.
More specifically, $E$ is the bundle associated to the representation of $SO(2p) \times SO(q)$ on $\mathbf R^{2p}$ where the factor $SO(2p)$ acts in the standard way and the factor $SO(q)$ acts trivially.

As we have already mentioned above, $M$ admits a homogeneous complex structure $\tilde J$.
In our notation, such complex structure is induced by the linear complex structure $\tilde H$ on $\mathfrak m$ defined by
\begin{equation}
\tilde HX = \left( \begin{array}{cc} \frac{1}{2}[J_0,A_{2p}] & J_0u \\ (J_0u)^t & 0 \end{array} \right).
\end{equation}
It can be readily checked that $\tilde J$ is integrable by direct calculation of its Nijenhuis tensor.
Moreover, $\tilde J$ turns out to be symplectic with respect to $\omega$, and the resulting metric on $M$ is pseudo-K\"ahler with segnature $(2pq,p^2-p)$.

\subsection{Quotients}\label{subsection::quot}

Let $(G,\omega)$ be a symplectic Lie group, and let $J$ be a left-invariant compatible almost complex structure on $G$.
Given a discrete torsion-free subgroup $\Gamma$ of $G$, both $\omega$ and $J$ descend to the quotient $\Gamma \backslash G$.
More generally, if $(M,\omega)$ is a symplectic homogeneous manifold of the form $M=G/K$ with $K$ compact, and $J$ is a homogeneous compatible almost complex structure on $(M,\omega)$, then any discrete torsion-free subgroup $\Gamma$ of $G$ gives rise to a quotient manifold $M'= \Gamma \backslash M$, and both $\omega$ and $J$ descend to a symplectic form $\omega'$ and a compatible almost complex structure $J'$ on $M'$.
Since $M$ covers $M'$, the local properties of $J$ and $J'$ are identical.
In particular, are constant the Hermitian scalar curvature and the norm of the Nijenhuis tensor of $J'$, and both coincide with the ones of $J$.
Moreover, $J'$ is special if and only if $J$ is.

Thanks to this elementary remark, one can produce several compact non-K\"ahler symplectic manifolds equipped with special compatible almost complex structures.

\subsubsection{The Kodaira-Thurston manifold and two-step nilmanifolds}

As in sub-section \ref{subsubsect::univcoverKT}, consider $G=\mathbf R^4$ endowed with the product defined by
\begin{equation}
x \cdot y = x+y + x_1y_2e_4
\end{equation}
for all $x,y \in \mathbf R^4$.
The points with integral coordinates of $G$ form a lattice $\Gamma$, and the quotient $M=\Gamma \backslash G$ is the Kodaira-Thurston manifold \cite{Thurston1976}.
By discussion above, the left-invariant symplectic and complex structures induced on $G$ by $\sigma$ and $H$ defined as in sub-section \ref{subsubsect::univcoverKT} descend to a symplectic structure $\omega$ and a compatible almost complex structure $J$ on $M$ satisfying $\rho=0$ and $|N|^2=1/4$.

More generally, let $G$ be a nilpotent two-step Lie group, and assume that $G$ admits a left-invariant symplectic structure.
As shown in sub-section \ref{subsubsection::Symp2stnil}, any left-invariant compatible almost complex structure has vanishing Chern-Ricci form.
As a consequence, on a symplectic two-step nilmanifold, i.e. the quotient $M=\Gamma \backslash G$ of a symplectic two-step Lie group $G$ by a lattice $\Gamma$, any homogeneous compatible almost complex structure is Chern-Ricci flat, accordingly to a previous result of Vezzoni \cite[Theorem 1]{Vezzoni2013}.

All examples discussed in this subsection admit no compatible integrable almost complex structures for they violate the hard Lefschetz condition, as proved by Benson and Gordon \cite[Theorem A]{BensonGordon1988}.

\subsubsection{Twistor spaces of hyperbolic manifolds and quotients of period domains of weight two}\label{subsec::quotienttwistorsandperioddomains}

A hyperbolic $2n$-fold is a complete Riemannian $2n$-fold of constant sectional curvature $-1$.
Since its universal cover is the hyperbolic space $H^{2n}$, any hyperbolic $2n$-fold is of the form $\Gamma \backslash H^{2n}$ where $\Gamma \subset SO(2n,1)$ is a torsion-free discrete subgroup.
As a consequence, the twistor space of $\Gamma \backslash H^{2n}$ is the quotient $\Gamma \backslash M$ where $M = SO(2n,1) / U(n)$ is the twistor space of $H^{2n}$.
We shown in section \ref{subsubsect::twistorHspace} that $M$ admits a homogeneous symplectic form $\omega$ and a compatible almost complex structure $J$ satisfying $\rho = (2n-4)\omega$.
Therefore, $\omega$ and $J$ descend to the twistor space of $\Gamma \backslash H^{2n}$ and satisfy the same equation on the Chern-Ricci form.
This gives plenty of symplectic Calabi-Yau six-folds (when $n=2$) and symplectic Fano $n(n+1)$-folds (when $n>2$), including compact examples, which admits special compatible almost complex structures.
Clearly the fundamental group of any such manifold is isomorphic to $\Gamma$.

In order to get general type examples one can start with the symplectic manifold $M=SO(2p,q)/(U(p) \times SO(q))$ considered in sub-section \ref{subsubsection::gentypesymp} and choose a torsion-free discrete subgroup $\Gamma \subset SO(2p,q)$.
By a classical result of Borel, such a $\Gamma$ exists even with the additional feature of being co-compact, i.e. giving a compact quotient $\Gamma \backslash M$ \cite{Borel1963}.
Therefore the symplectic structure $\omega$ and the compatible almost complex structure $J$ on $M$ satisfying $\rho = (2p-2q-2)\omega$ descend to $\Gamma \backslash M$, giving a (compact) symplectic manifold of dimension $p(p+2q-1)$, satisfying $2\pi c_1 = (p-q-1) [\omega]$, and admitting a special compatible almost complex structure.
Note that taking $p \leq q$ yields compact general type symplectic manifolds.

Finally we highlight that all compact examples discussed in this subsection are not of the homotopy type of a compact K\"ahler manifold whenever $SO(2n,1)/SO(2n)$ and $SO(2p,q)/(SO(2p) \times SO(q))$ are not Hermitian symmetric, that is when $n,p>1$ and $q \neq 2$.
This has been shown by Carlson and Toledo by means of harmonic mapping theory \cite[Theorems 8.1 and 8.2]{CarlsonToledo1989}.
Therefore, the resulting compact symplectic manifolds $\Gamma \backslash M$ admit no compatible integrable almost complex structures.
On the other hand, we recalled at the end of subsection \ref{subsubsection::gentypesymp} that $M$ admits a homogeneous complex structure $\tilde J$ which is symplectic with respect to $\omega$, and gives rise to a homogeneous pseudo-K\"ahler metric of signature $(2pq,p^2-p)$ on $M$.
Clearly such a structure descends to the quotient $\Gamma \backslash M$.

\end{document}